%% LyX 2.0.0 created this file.  For more info, see http://www.lyx.org/.
%% Do not edit unless you really know what you are doing.
\documentclass[english]{article}
\usepackage[T1]{fontenc}
\usepackage[latin9]{inputenc}
\usepackage{verbatim}
\usepackage{amsthm}
\usepackage{amsmath}
\usepackage{amssymb}
\usepackage{graphicx}
\usepackage{esint}

\makeatletter
%%%%%%%%%%%%%%%%%%%%%%%%%%%%%% Textclass specific LaTeX commands.
  \theoremstyle{plain}
  \newtheorem*{thm*}{\protect\theoremname}
\theoremstyle{plain}
\newtheorem{thm}{\protect\theoremname}[section]
  \theoremstyle{remark}
  \newtheorem{rem}[thm]{\protect\remarkname}
  \theoremstyle{definition}
  \newtheorem{defn}[thm]{\protect\definitionname}
  \theoremstyle{plain}
  \newtheorem{lem}[thm]{\protect\lemmaname}
  \theoremstyle{plain}
  \newtheorem{prop}[thm]{\protect\propositionname}
  \theoremstyle{plain}
  \newtheorem{cor}[thm]{\protect\corollaryname}
  \theoremstyle{definition}
  \newtheorem{example}[thm]{\protect\examplename}
  \theoremstyle{remark}
  \newtheorem{claim}[thm]{\protect\claimname}
  \theoremstyle{plain}
  \newtheorem{fact}[thm]{\protect\factname}
\newcommand{\lyxaddress}[1]{
\par {\raggedright #1
\vspace{1.4em}
\noindent\par}
}

%%%%%%%%%%%%%%%%%%%%%%%%%%%%%% User specified LaTeX commands.
\usepackage[all]{xy}

\makeatother

\usepackage{babel}
  \providecommand{\claimname}{Claim}
  \providecommand{\corollaryname}{Corollary}
  \providecommand{\definitionname}{Definition}
  \providecommand{\examplename}{Example}
  \providecommand{\factname}{Fact}
  \providecommand{\lemmaname}{Lemma}
  \providecommand{\propositionname}{Proposition}
  \providecommand{\remarkname}{Remark}
  \providecommand{\theoremname}{Theorem}
\providecommand{\theoremname}{Theorem}

\begin{document}
\global\long\def\Rad{\mathcal{R}}

\global\long\def\R{\mathbb{R}}

\global\long\def\C{\mathbb{C}}

\global\long\def\sign{\mbox{sign}}

\title{The quotient girth of normed spaces, and an extension of Sch\"{a}ffer's
dual girth conjecture to Grassmannians}

\author{Dmitry Faifman%
\thanks{Partially supported by ISF grant 387/09%
}}
\maketitle
\begin{abstract}
In this note we introduce a natural Finsler structure on convex surfaces,
referred to as the quotient Finsler structure, which is dual in a
sense to the inclusion of a convex surface in a normed space as a
submanifold. It has an associated quotient girth, which is similar
to the notion of girth defined by Sch\"{a}ffer. We prove the analogs
of Sch\"{a}ffer's dual girth conjecture (proved by \'{A}lvarez-Paiva)
and the Holmes-Thompson dual volumes theorem in the quotient setting.
We then show that the quotient Finsler structure admits a natural
extension to higher Grassmannians, and prove the corresponding theorems
in the general case. We follow \'{A}lvarez-Paiva's approach to the
problem, namely, we study the symplectic geometry of the associated
co-ball bundles. For the higher Grassmannians, the theory of Hamiltonian
actions is applied.
\end{abstract}
In the following, we will be concerned with certain invariants of
Finsler manifolds that are associated naturally to real, finite-dimensional
normed spaces. For a survey of Sch\"{a}ffer's work on the subject,
and some related facts from convex geometry, see \cite{Th}.\\

Consider a normed space $V$. Let $M\subset V$ be a closed hypersurface.
As a submanifold of $V$, $M$ inherits a natural Finsler structure
(that is, a norm on every tangent space), denoted $\psi^{V}$. We
call $(M,\psi^{V})$ the \emph{immersion} Finsler structure on $M$,
and write $\psi_{m}^{V}(v)=\|v\|_{V}$ for $v\in T_{m}M$.\\

However, if $M$ is the boundary of a strictly star-shaped body with
a center at the origin, then there is another, equally natural Finsler
structure $\phi^{V}$ on $M$, induced by $V$ (here by strictly star-shaped
we mean that $m\notin T_{m}M$ for all $m\in M$). Namely, we identify
the tangent space $T_{m}M$ with the quotient space $V/\langle m\rangle$,
where $\langle m\rangle$ is the 1-dimensional linear space spanned
by $m$. We call $(M,\phi^{V})$ the \emph{quotient }Finsler structure
on $M$, and may write for $v\in T_{m}M$ 
\[
\phi_{m}^{V}(v)=\inf_{t\in\R}\|v+tm\|_{V}
\]
If $M=S(V)$ is the unit sphere of $V$, what we get is a Finsler
structure on $M=S^{n-1}$ (as smooth manifolds), intrinsically attached
to the normed space $V$. It is the quotient structure and its generlizations
that we will study throughout the paper. In Figure 1 below, the two
Finsler structures on $S(V)$ are depicted simultaneously, and $B_{q}$
denotes the unit ball of the corresponding norm.\\
\begin{figure}[!tph]
\textbf{\emph{Immersion and quotient unit balls}}

\centering{}\caption{\protect\includegraphics{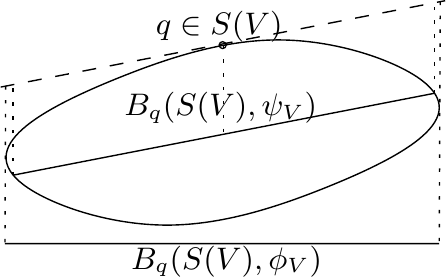}}
\end{figure}
\\
The girth of a Finsler structure on a sphere is the length of the
shortest symmetric curve. We may already state our first theorem:
\begin{thm*}
Let $V$ be a normed space. Then the girth of $(S(V),\phi^{V})$ equals
the girth of $(S(V^{*}),\phi^{V^{*}})$.
\end{thm*}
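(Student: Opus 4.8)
The plan is to realize the girth as a symplectic invariant of the unit co-sphere bundle of the quotient Finsler structure, and then to exploit a natural self-duality of the quotient construction. I would begin by computing the co-Finsler norm. Since $\phi_m^V$ is the quotient norm induced by $\|\cdot\|_V$ on $T_mS(V)\cong V/\langle m\rangle$, its dual norm on $T^*_mS(V)\cong(V/\langle m\rangle)^*=\langle m\rangle^\perp\subset V^*$ is exactly the restriction of $\|\cdot\|_{V^*}$. Hence the unit co-sphere bundle is
\[
\Sigma^V:=S^*(S(V),\phi^V)=\{(m,\xi)\in V\times V^*:\ \|m\|_V=1,\ \|\xi\|_{V^*}=1,\ \xi(m)=0\}.
\]
Carrying out the identical computation with $V$ replaced by $V^*$ (and using $V^{**}=V$), the bundle $\Sigma^{V^*}:=S^*(S(V^*),\phi^{V^*})$ is $\{(\xi,w)\in V^*\times V:\ \|\xi\|_{V^*}=1,\ \|w\|_V=1,\ \xi(w)=0\}$, that is, the \emph{same} subset of $V\times V^*$ under the transposition $(m,\xi)\leftrightarrow(\xi,m)$. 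So $\Sigma^V$ and $\Sigma^{V^*}$ are canonically identified as one smooth manifold $X$.

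The crucial step is to compare the two canonical contact forms on $X$. On $T^*N$ the Liouville form is $\lambda_{(q,p)}=p\circ d\pi$, and its restriction to the unit co-sphere bundle is a contact form whose Reeb flow is the unit-speed cogeodesic flow, the period of a closed Reeb orbit being the Finsler length of the underlying closed geodesic. Along a path $t\mapsto(m(t),\xi(t))$ in $X$, the contact form inherited from $\Sigma^V$ evaluates to $\xi(t)(\dot m(t))$, whereas the one inherited from $\Sigma^{V^*}$ evaluates to $\dot\xi(t)(m(t))$. Differentiating the defining relation $\xi(t)(m(t))\equiv 0$ yields $\dot\xi(t)(m(t))+\xi(t)(\dot m(t))=0$, so the two contact forms on $X$ are negatives of one another. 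Their Reeb vector fields therefore differ only by a sign, so the two Reeb flows agree up to reversal of time; in particular they have exactly the same closed orbits with the same periods.

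It follows that there is a length-preserving bijection between the closed geodesics of $(S(V),\phi^V)$ and those of $(S(V^*),\phi^{V^*})$: a closed Reeb orbit of $X$ projects via $(m,\xi)\mapsto m$ to a closed geodesic of $(S(V),\phi^V)$ and via $(m,\xi)\mapsto\xi$ to a closed geodesic of $(S(V^*),\phi^{V^*})$, and both have Finsler length equal to the common period. The antipodal involution $(m,\xi)\mapsto(-m,-\xi)$ preserves $X$, fixes the contact form (hence commutes with the Reeb flow) and intertwines the antipodal maps of $S(V)$ and $S(V^*)$, so this bijection sends symmetric closed geodesics to symmetric closed geodesics. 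Now the quotient metrics are reversible (norms are symmetric), and for a reversible Finsler metric on a sphere the girth is realized by a symmetric closed geodesic by the standard first-variation argument (as used by Sch\"{a}ffer and \'{A}lvarez-Paiva); applying the bijection to the shortest one on $S(V)$ produces a symmetric closed geodesic of the same length on $S(V^*)$, giving $\operatorname{girth}(S(V),\phi^V)\ge\operatorname{girth}(S(V^*),\phi^{V^*})$, and the reverse inequality follows by interchanging $V$ and $V^*$.

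The main obstacle I anticipate is regularity: for a general norm the quotient metric $\phi^V$ need not be smooth or strictly convex, so $X$ need not be a smooth contact manifold and the Reeb/geodesic dictionary must be applied with care. I would handle this by first proving the theorem for smooth, strictly convex norms, where the infimum defining $\phi_m^V(v)$ is attained at a unique $t$ depending smoothly on $(m,v)$, so that $\phi^V$ is a genuine smooth strictly convex Finsler metric; and then removing the hypothesis by approximating an arbitrary norm (in the Hausdorff/Banach--Mazur sense) and checking that the quotient girth depends continuously on the norm. A minor further point is the reduction ``girth $=$ length of the shortest symmetric closed geodesic,'' which I would either cite from Sch\"{a}ffer's work or establish by a short variational argument, using central symmetry to rule out corners in a length-minimizing symmetric curve.
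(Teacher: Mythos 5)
Your proposal is correct and follows essentially the same route as the paper: both identify $S^*(S(V),\phi^V)$ and $S^*(S(V^*),\phi^{V^*})$ with the incidence set $\{(m,\xi):\|m\|_V=1,\ \|\xi\|_{V^*}=1,\ \xi(m)=0\}$, observe that the two tautological one-forms $\xi\,dm$ and $m\,d\xi$ sum to zero on this set (differentiating $\xi(m)\equiv 0$), and conclude that the transposition map preserves the contact structure up to sign and intertwines the antipodal maps, after which the girth correspondence and the continuity/approximation argument to remove smoothness hypotheses match Theorem \ref{thm:The-co-sphere-bundles}, Corollary \ref{symplectomorphism_implies_spectrum}, and Lemma \ref{continuity}.
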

This should be compared with Sch\"affer's dual girth conjecture \cite{Sc},
proved by \'{A}lvarez-Paiva in \cite{AP}: 
\begin{thm*}
(\textbf{\emph{Sch\"affer, \'{A}lvarez-Paiva}}). Let $V$ be a normed
space. Then the girth of $(S(V),\psi^{V})$ equals the girth of $(S(V^{*}),\psi^{V^{*}})$.
\end{thm*}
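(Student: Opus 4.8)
The plan is to follow \'{A}lvarez-Paiva's route through the symplectic geometry of the co-ball bundles. By a standard approximation argument one may assume the unit ball $B_{V}$ is smooth and strictly convex, equivalently so is $B_{V^{*}}$: both sides of the claimed equality vary continuously under Hausdorff convergence of unit balls, so the general case follows. In this situation the Gauss map $\nu\colon S(V)=\partial B_{V}\to S(V^{*})=\partial B_{V^{*}}$, sending $m$ to its unique unit supporting functional $\xi_{m}$ (so $\langle\xi_{m},m\rangle=1$), is a diffeomorphism with inverse the Gauss map of $B_{V^{*}}$; write $x_{\xi}\in\partial B_{V}$ for $\nu^{-1}(\xi)$.

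Next I would recast the girth symplectically. A shortest symmetric closed curve on $(S(V),\psi^{V})$ exists by compactness, and after unit-speed reparametrization is a closed geodesic of $\psi^{V}$ invariant under the antipodal involution $a$; the only delicate point is smoothness at the two points where the curve meets its antipode, which follows from the first-variation formula at the optimal such configuration. Closed geodesics of a Finsler metric are exactly the closed orbits of the characteristic (Reeb) flow on the unit co-sphere bundle $\partial B^{*}(S(V),\psi^{V})\subset T^{*}S(V)$, and the Finsler length of an orbit is $\oint\lambda$, the integral along it of the tautological $1$-form. Hence the girth of $(S(V),\psi^{V})$ equals the least $\lambda$-action of an $a$-invariant closed characteristic on $\partial B^{*}(S(V),\psi^{V})$ --- an invariant of the exact symplectic manifold $B^{*}(S(V),\psi^{V})$ together with the lift of $a$ --- and likewise for $V^{*}$.

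The heart of the proof is a symplectomorphism between the two co-ball bundles, equivariant for the antipodal involutions, built from the duality $V\leftrightarrow V^{*}$. I would obtain it by realizing both as reductions of one symmetric manifold. Inside $T^{*}(V\setminus 0)\cong(V\setminus 0)\times V^{*}$ take $\hat Z=\{(x,\xi):\langle\xi,x\rangle=1\}$; it is a regular level of the moment map of the scaling $\R_{>0}$-action on the base, hence coisotropic, and its reduction $\hat Z/\R_{>0}$, trivialized by the flat connection $d\log\|\cdot\|_{V}$, is symplectomorphic to $(T^{*}S(V),\omega_{\mathrm{can}})$; trivializing instead by $d\log\|\cdot\|_{V^{*}}$ (after the relabelling $(x,\xi)\mapsto(\xi,x)$, which places $\hat Z$ in $T^{*}(V^{*}\setminus 0)$) identifies the same reduced space with $(T^{*}S(V^{*}),\omega_{\mathrm{can}})$. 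The swap $(x,\xi)\mapsto(\xi,x)$ preserves $\hat Z$, sends the ambient symplectic form to its negative, and conjugates the scaling action into its inverse, so it descends to an anti-symplectomorphism $\bar s\colon T^{*}S(V)\to T^{*}S(V^{*})$ equivariant for the antipodal involutions $(x,\xi)\mapsto(-x,-\xi)$; explicitly $\bar s(m,\bar\eta)=\bigl(\eta/\|\eta\|_{V^{*}},\ \overline{\|\eta\|_{V^{*}}\,m}\bigr)$, where $\eta$ is the representative of $\bar\eta$ with $\langle\eta,m\rangle=1$. One then checks that $\bar s$ carries the unit co-ball bundle of $\psi^{V}$ exactly onto that of $\psi^{V^{*}}$: this is the key lemma, and it comes down to the elementary, essentially $2$-dimensional, identity
\[
\|x\|_{V}\,\min_{s\in\R}\|\xi+s\,\xi_{x}\|_{V^{*}}\ =\ \|\xi\|_{V^{*}}\,\min_{s\in\R}\|x+s\,x_{\xi}\|_{V}\qquad\text{whenever }\langle\xi,x\rangle=1,
\]
each side being the distance from an affine line to the origin measured in a $2$-dimensional section of $B_{V^{*}}$, respectively $B_{V}$.

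Finally, an (anti-)symplectomorphism matches the characteristic foliations of boundary hypersurfaces and matches the $\lambda$-action of a closed leaf up to a sign (harmless: a geodesic and its reverse have equal length) and up to the exact $1$-form by which the two tautological forms differ (harmless: it integrates to zero over a loop), and it takes $a$-invariant leaves to $a$-invariant leaves. Thus every $a$-invariant closed geodesic on $S(V)$ is matched with one of equal length on $S(V^{*})$, so $\mathrm{girth}(S(V^{*}),\psi^{V^{*}})\le\mathrm{girth}(S(V),\psi^{V})$, and the reverse inequality holds by symmetry. The real obstacle, I expect, is the third paragraph: setting up the reduction so that the canonical symplectic forms are genuinely matched, and proving the $2$-dimensional convex-duality identity that makes $\bar s$ exchange the two unit co-ball bundles; the smoothness point in the second paragraph is a smaller but real additional ingredient.
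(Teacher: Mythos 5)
The paper does not actually prove this theorem: it is Sch\"affer's conjecture as established by \'Alvarez-Paiva, and the paper simply cites \cite{AP}, proving instead the analogous statement for the \emph{quotient} structure $\phi$ (Theorems \ref{thm:The-co-sphere-bundles} and \ref{thm:(Dual-spheres-have}). Your overall framework --- pass to smooth, strictly convex balls by a stability/continuity argument, reinterpret the girth as the least $\alpha$-action of an antipodally invariant closed characteristic on $S^{*}(S(V),\psi^{V})$, and produce a $1$-form-preserving identification with $S^{*}(S(V^{*}),\psi^{V^{*}})$ --- is indeed the route taken in \cite{AP} and the one the paper uses for $\phi$.

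However, the lemma you yourself flag as the crux is false. Normalizing $\|x\|_{V}=1$, it reads $\min_{s}\|\xi+s\xi_{x}\|_{V^{*}}=\|\xi\|_{V^{*}}\min_{s}\|x+sx_{\xi}\|_{V}$ for $\langle\xi,x\rangle=1$. It is $GL(V)$-invariant and holds for the Euclidean ball, hence for all ellipsoids, but not beyond. Take $V=(\R^{2},\|\cdot\|_{4})$, $x=(1,0)$, $\xi=(1,t)$ with $t>0$. Then $\xi_{x}=(1,0)$ and the left-hand side equals $t$ (minimum at $s=-1$). On the right, $\|\xi\|_{4/3}=(1+t^{4/3})^{3/4}$, $x_{\xi}=(1,t^{1/3})(1+t^{4/3})^{-1/4}$, and a direct minimization gives $\min_{s}\|x+sx_{\xi}\|_{4}=t^{1/3}(1+t^{4/9})^{-3/4}$, so the right-hand side is
\[
(1+t^{4/3})^{3/4}\,t^{1/3}\,(1+t^{4/9})^{-3/4},
\]
which equals $t$ only when $t=1$; at $t=8$ the two sides are $8$ and about $6.5$. (The norm $\|\cdot\|_{4}$ is not strictly convex at $(1,0)$, but the gap is open and survives a $C^{2}$-small strictly convexifying perturbation.) Hence your map $\bar{s}$ does \emph{not} exchange the two unit co-ball bundles, and the proof collapses at the step you anticipated would be the hard one.

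The structural reason is instructive: the swap on an incidence variety is tailored to the \emph{quotient} metric $\phi$, for which $S^{*}_{m}(S(V),\phi^{V})$ is a genuine sub-sphere $\{p\in\partial B_{V^{*}}:\langle p,m\rangle=0\}$, sitting on the level $\langle\xi,x\rangle=0$ --- this is precisely the paper's proof of Theorem \ref{thm:The-co-sphere-bundles}. For the \emph{immersion} metric $\psi$ the co-sphere fibers are quotient spheres of $V^{*}/\langle\mathcal{L}(m)\rangle$, which do not embed affinely into your level-$1$ incidence $\hat Z$ in a norm-compatible way. In \cite{AP} the correct incidence locus is the set $T\subset\partial K\times\partial L^{o}$ where the tangent pairing degenerates (described in the paper's subsection on the associated double fibration), and it is related to a $\phi$-type incidence only after composing with Legendre transforms on both factors; the swap on $\hat Z$ does not reproduce that map. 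To repair the argument you would need to work with $T$ and the Legendre transforms directly, rather than with $\hat Z$.
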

Next we consider a more general scenario. Let $V$ be a linear space,
and $K,L\subset V$ convex bodies (by a convex body we always mean
the unit ball of some symmetric norm). Let $V_{L}$ denote the normed
space $V$ with unit ball $L$. Then $\partial K$ has two Finsler
structures induced on it from $V_{L}$: the immersion structure $\psi_{L}$,
and the quotient structure $\phi_{L}$. We prove a theorem concerning
the quotient structure, that parallels the generalized dual girth
conjecture proved in \cite{AP}, and the Holmes-Thompson dual volumes
theorem \cite{HT}, which in turn concern the immersion Finsler structure.
More precisely, we prove
\begin{thm*}
Let $K,L\subset V$ be convex bodies. Then \textup{$(\partial K,\phi_{L})$}
and $(\partial L^{o},\phi_{K^{o}})$ have equal girth and Holmes-Thompson
volume. If $K,L$ are smooth and strictly convex, then the spectra
and the symmetric length spectra coincide.
\end{thm*}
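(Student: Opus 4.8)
The plan is to follow \'{A}lvarez-Paiva and study the symplectic geometry of the unit co-ball bundles $B^{*}\phi_{L}\subset T^{*}(\partial K)$ and $B^{*}\phi_{K^{o}}\subset T^{*}(\partial L^{o})$; the special feature of the quotient setting is that these two bundles, though living in different cotangent bundles, share a canonically identified boundary along which all the relevant symplectic data coincide up to sign. First I would write the bundles out explicitly. Via the quotient identification $T_{m}(\partial K)\cong V/\langle m\rangle$ one has $T_{m}^{*}(\partial K)=\langle m\rangle^{\perp}\subset V^{*}$, and the norm dual to the quotient norm $\phi_{L}$ is the restriction of $\|\cdot\|_{L^{o}}$ (dual of a quotient norm is restriction of the dual norm), so
\[
B^{*}\phi_{L}=\{(m,\xi)\in V\times V^{*}:\ m\in\partial K,\ \langle\xi,m\rangle=0,\ \|\xi\|_{L^{o}}\le1\}.
\]
A direct check on the definition of the Liouville form shows that in this model the tautological $1$-form of $T^{*}(\partial K)$ is the restriction of $\alpha:=\sum\xi_{i}\,dm_{i}$ under the inclusion $(m,\xi)\mapsto(m,\xi)$ into $T^{*}V=V\times V^{*}$. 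Running the same computation for $(\partial L^{o},\phi_{K^{o}})$ --- with $V,K,L$ replaced by $V^{*},L^{o},K^{o}$, using $(K^{o})^{o}=K$, and reordering the factors via $V^{**}=V$ --- gives
\[
B^{*}\phi_{K^{o}}=\{(m,\xi)\in V\times V^{*}:\ \xi\in\partial L^{o},\ \langle\xi,m\rangle=0,\ \|m\|_{K}\le1\},
\]
whose tautological $1$-form is the restriction of $\beta:=\sum m_{i}\,d\xi_{i}$.

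The next step is to note that both bundles sit inside the quadric $Q=\{\langle\xi,m\rangle=0\}$, which is smooth off the origin (there $d\langle\xi,m\rangle=\alpha+\beta\neq0$), and that $\partial B^{*}\phi_{L}=\{(m,\xi):m\in\partial K,\ \xi\in\partial L^{o},\ \langle\xi,m\rangle=0\}=\partial B^{*}\phi_{K^{o}}$ --- call this common manifold $\Sigma$ --- where the boundary computation uses that $\langle m\rangle^{\perp}$ meets $\mathrm{int}\,L^{o}$, so $\partial_{\mathrm{rel}}(L^{o}\cap\langle m\rangle^{\perp})=\partial L^{o}\cap\langle m\rangle^{\perp}$. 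Because $\alpha+\beta=d\langle\xi,m\rangle$ and $\langle\xi,m\rangle$ vanishes identically on $Q$, pulling back to $Q$ (hence to $\Sigma$ and to each bundle) gives $\alpha=-\beta$; thus the two tautological contact forms on $\Sigma$, and the two symplectic forms $d\alpha,d\beta$ restricted to $\Sigma$, are negatives of one another. Hence $\Sigma$ carries a single characteristic line field, common to $T^{*}(\partial K)$ and $T^{*}(\partial L^{o})$: the cogeodesic flows of $\phi_{L}$ and $\phi_{K^{o}}$ have the same orbits, oppositely oriented, and $\int_{c}\alpha=-\int_{c}\beta$ for each closed orbit $c$. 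Since the length of a closed geodesic equals the integral of the tautological $1$-form along its Legendre lift, reversing orientation shows that corresponding closed geodesics of $\phi_{L}$ and $\phi_{K^{o}}$ have equal length; the correspondence commutes with the common antipodal involution $(m,\xi)\mapsto(-m,-\xi)$ (which preserves $\alpha$, $\beta$), so the length spectra and symmetric length spectra agree, and the girth --- the length of the shortest symmetric closed geodesic, a quantity read off from $\Sigma$, its tautological form, and the antipode, up to the sign of the form --- is the same on both sides. When $K,L$ are smooth and strictly convex the quotient norms are genuine smooth Finsler metrics and all of this is literally meaningful; the girth and Holmes--Thompson statements in the general case then follow by approximating $K,L$ by smooth strictly convex bodies and using continuity of girth and of Holmes--Thompson volume.

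For the Holmes--Thompson volumes, which equal $\varepsilon_{n-1}^{-1}$ times the Liouville volumes $\int_{B^{*}\phi_{L}}\frac{(d\alpha)^{n-1}}{(n-1)!}$ and $\int_{B^{*}\phi_{K^{o}}}\frac{(d\beta)^{n-1}}{(n-1)!}$ ($n=\dim V$), I would run Stokes inside $Q$. The compact region $\mathcal{C}=\{(m,\xi):\langle\xi,m\rangle=0,\ \|m\|_{K}\le1,\ \|\xi\|_{L^{o}}\le1\}\subset Q$ has boundary $B^{*}\phi_{L}\cup_{\Sigma}B^{*}\phi_{K^{o}}$, and the closed $(2n-2)$-form $\mu=\frac{(d\alpha)^{n-1}}{(n-1)!}$ on $V\times V^{*}$ restricts to the Liouville volume form of $B^{*}\phi_{L}$ on the first face and, since $d\alpha=-d\beta$, to $(-1)^{n-1}$ times that of $B^{*}\phi_{K^{o}}$ on the second. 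Excising a small ball about the origin (the only singular point of $Q$; the contribution of the excised sphere is $O(\varepsilon^{2n-2})\to0$), applying Stokes to the resulting region and letting the ball shrink yields $\int_{B^{*}\phi_{L}}\mu=\pm\int_{B^{*}\phi_{K^{o}}}\mu$; taking absolute values, the two Liouville volumes --- hence the two Holmes--Thompson volumes --- are equal.

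I expect the step requiring most care to be this last one: tracking the induced boundary orientations on the two faces of $\partial\mathcal{C}$ (needed only to upgrade a $\pm$ to an equality, for which positivity of Liouville volume already suffices), controlling the singular point of $Q$, and, in the non-smooth case, either invoking a Lipschitz Stokes theorem or reverting to approximation plus continuity. Everything else is essentially formal once the quotient Finsler structure is correctly set up, and is the faithful quotient-side analog of \'{A}lvarez-Paiva's symplectic proof of Sch\"{a}ffer's conjecture.
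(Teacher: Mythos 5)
Your proposal is correct and is essentially the paper's argument: you identify $T_m^{*}(\partial K)$ with $\langle m\rangle^{\perp}\subset V^{*}$, put both co-sphere bundles inside the quadric $Z=\{\langle\xi,m\rangle=0\}\subset V\times V^{*}$ as the common manifold $\Sigma=(\partial K\times\partial L^{o})\cap Z$, and use the key relation $\alpha+\beta=d\langle\xi,m\rangle\big|_{Z}=0$, which is exactly the paper's $\alpha_{1}\big|_{Z}+\alpha_{2}\big|_{Z}=0$ and its map $\Phi(q,p)=(p,q)$. Your explicit Stokes computation over the region $\mathcal{C}\subset Q$ (with the excision at the singular origin) for the Holmes--Thompson volumes is a clean way to spell out what the paper leaves implicit after constructing the contactomorphism on co-sphere bundles; it amounts to the same Stokes identity $\int_{B^{*}}\omega^{n-1}=\int_{S^{*}}\alpha\wedge\omega^{n-2}$ and yields nothing different in substance.
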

\noindent For the definitions of Holmes-Thompson volume, length spectrum
etc., see section \ref{sec:Background-from-Finsler}.\\

As it turns out, the quotient Finsler structure admits a natural extension
to higher Grassmannians, together with the duality theorems stated
above (for the immersion structure, no such extension is known). Consider
the oriented Grassmannian $\tilde{G}(V,k)$, which is the set of oriented
$k$-dimensional subspaces of the $n$-dimensional space $V$, with
the natural smooth manifold structure. Assume some norm $\beta$ is
given on the space of linear operators $Hom(V,V)$. We describe an
associated Finsler structure on $\tilde{G}(V,k)$, denoted $\phi_{\beta}$,
which is defined for every $1\leq k\leq n-1$; for $k=1$ and $\beta$
the nuclear norm on $Hom(V_{K},V_{L})$, it reduces to the quotient
Finsler structure $(\partial K,\phi_{L})$. We then study some of
its invariants, such as the girth and Holmes-Thompson volume. Also,
we associate a certain natural number to each geodesic, called its
rank. We prove the following 
\begin{thm*}
For any norm $\beta$ on $Hom(V,V)$, the Finsler manifolds $\tilde{(G}(V,k),\phi_{\beta})$
and $\tilde{(G}(V,n-k),\phi_{\beta})$ have equal Holmes-Thompson
volumes and equal girth. If $\beta$ is smooth and strictly convex,
the length spectra and symmetric length spectra of the two manifolds
coincide, alongside with the corresponding ranks of geodesics.
\end{thm*}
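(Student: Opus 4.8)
\section*{Proof proposal}

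The plan is to recast the problem in terms of the geometry of the nilpotent cone of $Hom(V,V)$, following \'{A}lvarez-Paiva's symplectic method but with the theory of Hamiltonian group actions taking the place of the elementary arguments available when $k=1$. Write $n=\dim V$ and $r=\min(k,n-k)$. First I would make the cotangent bundle explicit: for $E\in\tilde G(V,k)$ use the canonical identifications $T_E\tilde G(V,k)=Hom(E,V/E)$ and, dually, $T_E^{*}\tilde G(V,k)=Hom(V/E,E)$, the latter realized inside $Hom(V,V)$ as the operators $\Xi$ with $\mathrm{im}\,\Xi\subseteq E\subseteq\ker\Xi$. By construction $\phi_\beta$ is the quotient norm on $Hom(E,V/E)$ induced from $(Hom(V,V),\beta)$ by $\tilde A\mapsto\pi_E\tilde A\iota_E$ (with $\iota_E\colon E\hookrightarrow V$ and $\pi_E\colon V\twoheadrightarrow V/E$), so its dual norm is the restriction of $\beta^{*}$ to the annihilator of the kernel of that map, which is precisely $Hom(V/E,E)$; in other words $\phi_\beta^{*}(\Xi)=\beta^{*}(\iota_E\Xi\pi_E)$. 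Equivalently, writing $\mu\colon T^{*}\tilde G(V,k)\to Hom(V,V)$ for the moment map of the natural $GL(V)$-action --- which is exactly $(E,\Xi)\mapsto\iota_E\Xi\pi_E$ --- we have $\phi_\beta^{*}=\beta^{*}\circ\mu$, and the co-ball bundle equals $\mu^{-1}\{\beta^{*}\le 1\}$.

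Next I would invoke the structure theory of $\mu$. Its image is the nilpotent variety $\mathcal N=\{N\in Hom(V,V):N^{2}=0,\ \mathrm{rank}\,N\le r\}$, a union of nilpotent $GL(V)$-orbits; over the open dense stratum $\{\mathrm{rank}\,N=r\}$, a single orbit $\mathcal O_{r}$, the map $\mu$ is a two-to-one local symplectomorphism onto $\mathcal O_{r}$ --- the two sheets coming from the orientation double cover $\tilde G(V,k)\to G(V,k)$ --- and a direct computation shows that $\mu$ carries the tautological Liouville form $\lambda_{KKS}$ of $\mathcal O_{r}$ (the primitive of the Kirillov--Kostant--Souriau form furnished by the conical structure) to the canonical $1$-form of $T^{*}\tilde G(V,k)$; over the lower strata $\mu$ degenerates with positive-dimensional fibres. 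Since a moment map is Poisson, the $\phi_\beta$-cogeodesic flow --- the Hamiltonian flow of $\tfrac12(\phi_\beta^{*})^{2}=\tfrac12(\beta^{*}\circ\mu)^{2}$ on $T^{*}\tilde G(V,k)$ --- is carried by $\mu$ to the Lie--Poisson Hamiltonian flow of $\tfrac12(\beta^{*})^{2}$ on $Hom(V,V)$, which preserves conjugation orbits and hence $\mathcal N$ and each of its strata. Thus closed $\phi_\beta$-geodesics on $\tilde G(V,k)$ correspond to closed orbits of this one fixed flow inside $\mathcal N$: the length of a geodesic equals the action $\oint\lambda_{KKS}$ of the corresponding orbit, doubled exactly when the geodesic is invariant under the deck involution $E\mapsto\bar E$; the rank of a geodesic is $\mathrm{rank}\,N$ along its orbit; and the symmetric geodesics are picked out by a $\mathbb Z_{2}$-cover of $\mathcal N$.

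The duality is then immediate, because all of this data is attached to $\mathcal N\subset Hom(V,V)$ together with the single flow of $\tfrac12(\beta^{*})^{2}$, and is therefore the same whether we begin from $\tilde G(V,k)$ or from $\tilde G(V,n-k)$: since $\min(k,n-k)=\min(n-k,k)=r$, the $GL(V)$-moment maps of $T^{*}\tilde G(V,k)$ and of $T^{*}\tilde G(V,n-k)$ have the very same image $\mathcal N$ and induce the very same reduced flow. Hence at once: the two co-ball bundles are both two-to-one over $\mathcal O_{r}\cap\{\beta^{*}\le 1\}$, so they have equal symplectic volume and equal Holmes--Thompson volume; the ranks of geodesics are read off $\mathcal N$, so they coincide; and the length spectra coincide. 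For the symmetric length spectra, and for the girth, one further needs the two $\mathbb Z_{2}$-covers of $\mathcal O_{r}$ governing symmetry on the two sides to be the same cover --- they are, since one is classified by $w_{1}$ of the tautological image bundle $\mathrm{im}(N)$ and the other by $w_{1}$ of the tautological kernel bundle $\ker(N)$, and the tautological operator induces an isomorphism $\mathrm{im}(N)\cong V/\ker(N)$ of bundles over $\mathcal O_{r}$, so these two first Stiefel--Whitney classes coincide. Consequently the symmetric closed geodesics on $\tilde G(V,k)$ and on $\tilde G(V,n-k)$ are in length-preserving bijection, and in particular the girths agree. When $\beta$ is merely a norm, neither smooth nor strictly convex, the cogeodesic flow is not a genuine flow; exactly as in the proofs of the $k=1$ theorems one then works instead with the convex-analytic characteristic system on the boundary of $\{\beta^{*}\le 1\}$ and still obtains the equalities of girth and of Holmes--Thompson volume, while the remaining spectral and rank statements require the smoothness and strict convexity precisely so that closed geodesics, their lengths, and their ranks are unambiguously defined.

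I expect the main obstacle to lie in the middle step. One has to prove cleanly that $\mu$ is a two-sheeted symplectic covering onto $\mathcal O_{r}$ intertwining the canonical $1$-form with $\lambda_{KKS}$, and then control the degenerate strata $\{\mathrm{rank}\,N<r\}$, where $\mu$ has positive-dimensional fibres: although the underlying orbit of a rank-$s$ geodesic still lies in the smaller orbit $\mathcal O_{s}$ common to both sides, the fibrewise data over $\mathcal O_{s}$ must be matched as well --- plausibly by an induction on $r$ --- so that the correspondence of geodesics, and the well-definedness of the rank, survive the stratification. The moment-map and symplectic-reduction facts themselves are standard; assembling them together with the orientation double cover, the conical Liouville form, the deck involution, and the rank stratification, and then running the non-smooth approximation argument for a general norm $\beta$, is where the real work lies --- while the small Stiefel--Whitney-class comparison is exactly what upgrades the soft equality of volumes to the genuine equality of girths.
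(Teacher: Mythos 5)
Your framework coincides with the paper's for the Holmes--Thompson volume: the paper identifies the $GL(V)$-moment map $\mu(\Lambda,T)=T$ on $T^{*}\tilde G(V,l)$, shows its restriction $\mu_l\colon E_s^l\to A_s$ to the top-rank stratum is a surjective $2$-to-$1$ local symplectomorphism onto the coadjoint orbit $A_s$ of square-zero operators of rank $s=\min(l,n-l)$, and then constructs the explicit diffeomorphism $\Phi(\mathrm{Im}\,T,T)=(\mathrm{Ker}\,T,T)$ over that stratum; since the lower strata have measure zero, the volumes agree. That is essentially your argument, and it is correct.

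For the length spectra, however, your proposal has a genuine gap, and it is exactly the one you flag. Over the top stratum your $\mathbb Z_{2}$-covering and Stiefel--Whitney comparison works (and is essentially equivalent to observing that $\Phi$ intertwines the two characteristic flows and commutes with the deck involutions); but a geodesic of $\tilde G(V,k)$ may lift to a characteristic curve of any rank $r\leq\min(k,n-k)$, and over the rank-$r$ stratum with $r<k$ the map $\mu_k\colon E_r^k\to A_r$ has fibres diffeomorphic to a whole oriented Grassmannian $\tilde G(\mathrm{Ker}\,T/\mathrm{Im}\,T,\,k-r)$ of positive dimension. There is then no covering space, no monodromy class in $H^1$, and nothing that forces a closed orbit downstairs in $A_r$ to lift to a closed characteristic curve upstairs; ``plausibly by an induction on $r$'' is not an argument. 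The paper does not argue this by matching orbits over $A_r$ at all. Instead it produces the correspondence dynamically: from a closed rank-$r$ characteristic curve $(\Lambda_t,T_t)$ it builds the evolution operator $B_t\in GL^{+}(V)$ solving $\dot B_t=S_tB_t$ with $S_t=\mathcal L_{Hom(V,V)}(T_t)$, proves $B_t$ transports $\Lambda_0$, $\mathrm{Im}\,T_0$ and $\mathrm{Ker}\,T_0$ along the geodesic, observes that the monodromy $B_L$ therefore fixes $\Lambda_0$ (or sends it to $\overline{\Lambda_0}$ in the symmetric case), and then invokes a linear-algebra lemma --- if $T\in GL(V)$ leaves a $k$-plane $\Lambda$ invariant there is a complementary-dimensional $T$-invariant $\Omega$ with $\det T|_{\Lambda}\cdot\det T|_{\Omega}=\det T$, obtained from the real conjugacy of $T$ and $T^{*}$ --- applied to the induced operator on $\mathrm{Ker}\,T_0/\mathrm{Im}\,T_0$, to manufacture the required $(n-k)$-plane $\Omega_0$ with exactly the right orientation behaviour. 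This step is what handles the degenerate strata, preserves rank, and distinguishes the closed from the symmetric-closed case; it is the actual content that your outline identifies as ``where the real work lies'' but does not supply. The remaining pieces of your proposal (the lemma that geodesic lifts have constant rank, the approximation argument for non-smooth $\beta$, the volume equality) do match the paper.
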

In contrast to the case $k=1$, where the result follows from the
existence of a natural diffeomorphism between the co-sphere bundles
which preserves the canonic 1-form, for general $k$ we are only able
to construct such a morphism on a dense open subset. We then make
use of a natural decomposition of the cotangent bundles, and apply
an implicit existence lemma from linear algebra.

\subsection*{Acknowledgments}

I would like to thank Vitali Milman for introducing me to the topic,
and for the stimulating talks and constant encouragement. I am indebted
to Semyon Alesker, Egor Shelukhin and Yaron Ostrover for numerous
inspiring discussions and fruitful suggestions; I am grateful to Bo'az
Klartag, Liran Rotem and Alexander Segal for multiple useful conversations
and for helping me to verify the reasoning; and to Juan Carlos \'Alvarez-Paiva
for reading the text and helping to improve the exposition. Finally,
I would like to acknowledge the Fields Institute in Toronto, where
this work started, for the kind hospitality and wonderful working
atmosphere.

\section{Background from Finsler and symplectic geometry\label{sec:Background-from-Finsler}}

For completeness, we recall the basic definitions of Finsler geometry,
and its relation to symplectic geometry. We also fix notation that
will later be used. For more details, consult \cite{AT}.

\subsection{General properties of girth}

Recall that a smooth Finsler manifold $(M,\phi)$ is a smooth manifold
$M$, together with a function $\phi:TM\to\R$ that is smooth outside
the zero section, and restricts to a strictly convex norm on $T_{m}M$
for every $m\in M$. The notions of length of a curve and distance
on $M$ are identical to their Riemannian manifolds counterpart. 
\begin{rem}
By strictly convex, we mean that the gaussian curvature is strictly
positive. In the following, we will also make use of the term \emph{weakly
strictly convex}, which means that the boundary of the body does not
contain straight segments.
\end{rem}

\begin{rem}
\label{rem:continuous_finsler}We sometimes relax the requirement
for $\phi$ to be a smooth function with strictly convex restrictions
to tangent spaces, settling only for continuity and convex restrictions.
This allows us to treat stable invariants of Finsler manifolds, such
as girth and Holmes-Thompson volume (see definitions below), under
more general conditions.\end{rem}
\begin{defn}
Let $(M,R)$ be a smooth compact manifold with an involutive diffeomorphism
$R:M\rightarrow M$ that has no fixed points. We refer to $R$ as
the antipodal map, and also denote the natural extension $R:T^{*}M\to T^{*}M$
by the same symbol. We say that a Finsler metric $\phi$ on $M$ is
symmetric if $R$ is an isometry of $(M,\phi)$. The girth $g_{\phi}(M)$
is the length of the shortest symmetric (i.e. $R$-invariant) geodesic. \end{defn}
\begin{rem}
\label{remk:girth_is_simple}Note that $g_{\phi}(M)=2\min\{dist(p,Rp):p\in M\}$
for the following reason: take the closest pair of antipodal points
$(p,Rp)$, with $d=dist(p,Rp).$ Take a curve $\gamma$ between them
with $L(\gamma)=d$. Then $R\gamma$ also joins $(p,Rp)$ and $L(R\gamma)=d$,
and for any $q\in\gamma$, we get two curves between $q$ and $Rq$,
which have length $d$. Thus $dist(q,Rq)=d$ by minimality assumption,
and so those curves are geodesic. In particular, $\gamma\cup R\gamma$
is geodesic around $p$, i.e. a symmetric closed geodesic. Therefore,
$g_{\phi}(M)\leq2\min\{dist(p,Rp):p\in M$\}. The reverse inequality
is obvious.\end{rem}
\begin{lem}
\label{continuity}Let $\phi_{1}$, $\phi_{2}$ be symmetric Finsler
structures on $(M,R)$, such that $(1+\epsilon)^{-1}\phi_{1}\leq\phi_{2}\leq(1+\epsilon)\phi_{1}.$
Then $(1+\epsilon)^{-1}g_{\phi_{1}}(M)\leq g_{\phi_{2}}(M)\leq(1+\epsilon)g_{\phi_{1}}(M)$.\end{lem}
\begin{proof}
Denote $g_{j}=g_{\phi_{j}}(M)$. Consider the $\phi_{1}$-shortest
curve $\gamma\subset\tilde{G}(V,k)$ with $\gamma(1)=R\gamma(0)$.
Then $|L(\gamma;\phi_{2})-L(\gamma;\phi_{1})|\leq\int_{0}^{1}|\phi_{2}(\dot{\gamma})-\phi_{1}(\dot{\gamma})|dt\leq\epsilon L(\gamma;\phi_{1})$.
Therefore, $g_{2}\leq2L(\gamma,\phi_{2})\leq2(1+\epsilon)L(\gamma;\phi_{1})=g_{1}(1+\epsilon)$.
Similarly, $g_{1}\leq g_{2}(1+\epsilon)$.\end{proof}
\begin{rem}
\label{Remark Continuity of girth} The Lemma often allows one to
omit assumptions of smoothness and strict convexity of the Finsler
metric when studying the girth of spaces.
\end{rem}

\subsection{Symplectic invariants}

Let $(M,\phi)$ be a smooth Finsler manifold. A curve $\gamma$ on
$M$ is a geodesic if it locally minimizes distance. The set of lengths
of closed geodesics on $M$ is called the length spectrum, and if
$M$ is equipped with an antipodal map, the symmetric length spectrum
is the set of lengths of closed symmetric geodesics. The girth is
then the minimal element of the symmetric length spectrum. \\

Every cotangent space $T_{m}^{*}M$ is equipped with the dual norm
$\phi_{m}^{*}$. Define the co-ball bundle by $B^{*}M=\{(m,\xi)\in T^{*}M:\phi_{m}^{*}(\xi)\leq1\}$,
and similarly $S^{*}M$ is the co-sphere bundle. Denote also by $T_{0}^{*}M$
the cotangent bundle with the zero-section excluded. One has the Legendre
duality map $\mathcal{L}_{m}:T_{m}M\to T_{m}^{*}M$ (see subsection
\ref{sub:Definitions-and-basic} for the definition of Legendre transform).
For a curve $\gamma\in M$ we define its lift to $T^{*}M$, denoted
$\mathcal{L}\gamma$, by $\mathcal{L}\gamma(t)=(\gamma(t),\mathcal{L}_{\gamma(t)}(\dot{\gamma}(t)))$.
We denote the canonic 1-form and symplectic form on $T^{*}M$ by $\alpha$
and $\omega=d\alpha$. Also, we define the associated Hamiltonian
function $H_{M}(m,\xi)=\frac{1}{2}\phi^{*}(\xi)^{2}$ on $T^{*}M$.\\

There are several natural volume densities on a Finsler manifold.
We will use exclusively the Holmes-Thompson volume $v_{HT}$, defined
as the push-forward under the natural projection $B^{*}M\to M$ of
the Liouville volume form $\frac{1}{n!}\omega^{n}$ on $B^{*}M$.
Note that one only needs the Finsler structure to be continuous in
order to define the Holmes-Thompson density, and that $v_{HT}(M)=\frac{1}{n!}\int_{B^{*}M}\omega^{n}$
is continuous as a function of $\phi\in C(TM,\R)$. 
\begin{prop}
Let $M$ be a geodesically complete Finsler manifold. For a curve
$\gamma(t)\in M$, denote $\Gamma(t)=\mathcal{L}\gamma(t)\in T^{*}M$.
The following are equivalent: \begin{enumerate}

\item $\gamma(t)$ is a geodesic with arc-length parametrization.

\item $\Gamma(t)$ is a flow curve for the associated Hamiltonian
$H_{M}$ of constant energy $H_{M}=\frac{1}{2}$.

\item $\Gamma(t)$ is a flow curve for the Reeb vector field on $S^{*}M$.

\end{enumerate}

\noindent Moreover, a curve $\Gamma(t)$ satisfying either 2. or
3. is necessarily the lift of a geodesic $\gamma(t)$. 
\end{prop}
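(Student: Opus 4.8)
The plan is to route everything through the classical Legendre correspondence between the Lagrangian $L(m,v)=\frac12\phi_m(v)^2$ on $TM$ and the Hamiltonian $H_M(m,\xi)=\frac12\phi_m^*(\xi)^2$ on $T^*M$. First I would record the properties of the fiberwise Legendre map $\mathcal{L}_m=d_v\bigl(\frac12\phi_m^2\bigr)\colon T_mM\to T_m^*M$: since $\phi$ is smooth and fiberwise strictly convex, $\mathcal{L}$ is a fiber-preserving diffeomorphism $T_0M\to T_0^*M$; and by $1$-homogeneity of $\phi$ together with Euler's identity one gets $\langle\mathcal{L}_m(v),v\rangle=\phi_m(v)^2$ and $\phi_m^*(\mathcal{L}_m(v))=\phi_m(v)$, so that $H_M(\mathcal{L}_m v)=L(m,v)$ and, dually, the fiber derivative $d_\xi H_M$ at $(m,\xi)$ equals $\mathcal{L}_m^{-1}(\xi)$.

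Next, the equivalence of 1 and 2. A curve $\gamma$ is a geodesic exactly when, in a constant-speed parametrization, it is an extremal of the energy $E(\gamma)=\int\frac12\phi(\dot\gamma)^2\,dt$, i.e.\ satisfies the Euler--Lagrange equations of $L$; moreover the Euler--Lagrange flow of a Finsler Lagrangian automatically preserves $\phi(\dot\gamma)$, so the arc-length parametrization is precisely the one normalized by $\phi(\dot\gamma)\equiv1$. The classical theorem that, under the Legendre diffeomorphism, the Euler--Lagrange equations of $L$ transform into Hamilton's equations of $H_M$ then identifies arc-length geodesics with the flow curves of $X_{H_M}$ along which $H_M$ is constant; since $H_M(\mathcal{L}_{\gamma(t)}\dot\gamma(t))=\frac12\phi(\dot\gamma(t))^2$, this constant equals $\frac12$ iff the parametrization is by arc length. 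The ``moreover'' clause for 2 is then immediate from the first Hamilton equation $\dot\gamma(t)=d_\xi H_M(\Gamma(t))=\mathcal{L}_{\gamma(t)}^{-1}(\Gamma(t))$, which says exactly $\Gamma=\mathcal{L}\gamma$; geodesic completeness ensures the flow curve is defined on the same interval as $\gamma$.

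For the equivalence of 2 and 3 I would first observe that the level set $\{H_M=\frac12\}$ is exactly $S^*M$, and that $\alpha|_{S^*M}$ is a contact form because each fiber $S_m^*M$ bounds a strictly convex body (the standard contact-type criterion for fiberwise convex co-disk bundles). On this regular level set $dH_M$ annihilates $TS^*M$, so $\iota_{X_{H_M}}\omega$ vanishes on $TS^*M$ and hence $X_{H_M}$ is pointwise proportional to the Reeb field $\mathcal{R}$. To fix the factor I compute, using $\alpha_{(m,\xi)}=\xi\circ d\pi$ and Euler's identity, $\alpha(X_{H_M})=\langle\xi,d_\xi H_M\rangle=2H_M=1$ on $S^*M$, which is exactly the Reeb normalization $\alpha(\mathcal{R})=1$. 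Therefore $X_{H_M}=\mathcal{R}$ along $S^*M$, their flow curves coincide, and the ``moreover'' clause for 3 follows from that of 2.

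The bookkeeping with homogeneity and the Euler--Lagrange-to-Hamilton passage is routine; the only points that genuinely need the standing hypotheses are that $\mathcal{L}$ is a global fiberwise diffeomorphism and that $\alpha$ restricts to a contact form on $S^*M$ --- both consequences of the strict convexity built into the definition of a smooth Finsler metric --- so I do not expect a real obstacle here, just care with sign conventions for $\omega=d\alpha$ and $X_{H_M}$.
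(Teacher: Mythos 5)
Your proof is correct and follows essentially the same route the paper sketches: geodesics with constant-speed parametrization are identified with extremals of the energy functional $E(\gamma)=\tfrac12\int\phi(\dot\gamma)^2\,dt$, the Legendre transform carries the Euler--Lagrange flow of $L=\tfrac12\phi^2$ to the Hamiltonian flow of $H_M=\tfrac12(\phi^*)^2$, the arc-length normalization corresponds to $H_M\equiv\tfrac12$, and the equivalence of (2) and (3) follows from the $2$-homogeneity of $H_M$ (your computation $\alpha(X_{H_M})=\langle\xi,d_\xi H_M\rangle=2H_M=1$ is exactly the content of that remark). You have simply unpacked the ``well-known fact'' the paper states in one paragraph into a full argument, including the Euler-identity verifications $\langle\mathcal{L}_m v,v\rangle=\phi_m(v)^2$, $\phi_m^*(\mathcal{L}_m v)=\phi_m(v)$, and the contact-type condition on $S^*M$; no new idea or different decomposition is introduced.
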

For completeness, we sketch the proof of this well-known fact.
\begin{proof}
A geodesic between $a=\gamma(t_{0})$ and $b=\gamma(t_{1})$ with
a parametrization proportional to arc-length if and only if it is
a minimizer of the energy functional $E(\gamma(t))=\frac{1}{2}\int_{t_{0}}^{t_{1}}\phi(\dot{\gamma}(t))^{2}dt$
(see \cite{Mi}, ch. 12). By the Lagrangian-Hamiltonian duality, those
lift precisely to the flow curves of $H_{M}$ on $T^{*}M$. The parametrization
is arc-length $\iff$ $\phi^{*}(\mathcal{L}\dot{\gamma})\equiv1$$\iff$
$H(\mathcal{L}\gamma)=\frac{1}{2}$. The equivalence of $2.$ and
$3.$ follows from the 2-homogeneity of $H_{M}$.
\end{proof}
Slightly abusing the standard terminology, we will call a curve $\Gamma(t)$
satisfying either condition $2.$ or $3.$ a \emph{characteristic
curve.}
\begin{cor}
\label{symplectomorphism_implies_spectrum}Suppose $M$ and $N$ are
two Finsler manifolds, and either \begin{itemize}

\item $\Phi:T_{0}^{*}M\to T_{0}^{*}N$ is a symplectomorphism such
that $\Phi^{*}H_{N}=H_{M}$; or

\item $\Phi:S^{*}M\to S^{*}N$ is a diffeomorphism s.t. $\mbox{\ensuremath{\Phi}}^{*}\alpha_{N}=\alpha_{M}$.

\end{itemize}

\noindent Then $M$ and $N$ have equal length spectrum. If $M$
and $N$ both possess antipodal maps $R_{M}$, $R_{N}$ and $\Phi R_{M}=R_{N}\Phi$
then the symmetric length spectra of $M$ and $N$ coincide as well.\end{cor}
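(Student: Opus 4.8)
\textbf{Proof proposal for Corollary \ref{symplectomorphism_implies_spectrum}.}

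The plan is to reduce everything to the Proposition just proved, which identifies closed geodesics of a Finsler manifold (up to parametrization and up to the bijection $\mathcal{L}$) with periodic characteristic curves, i.e.\ periodic orbits either of the Hamiltonian flow of $H_M$ on $T_0^*M$ at energy $\tfrac12$, or equivalently of the Reeb flow on $S^*M$. Since the notions on both sides are intrinsic to these symplectic/contact data, a morphism preserving the relevant structure will carry periodic orbits to periodic orbits of the same period, and periods are exactly the lengths appearing in the length spectrum.

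First, consider the symplectic case: $\Phi:T_0^*M\to T_0^*N$ is a symplectomorphism with $\Phi^*H_N=H_M$. I would recall that a symplectomorphism intertwining Hamiltonians intertwines the corresponding Hamiltonian vector fields, $\Phi_*X_{H_M}=X_{H_N}$ (this follows from $\omega_N(X_{H_N},\cdot)=dH_N$ together with $\Phi^*\omega_N=\omega_M$ and $\Phi^*dH_N=dH_M$). Hence $\Phi$ maps flow curves of $X_{H_M}$ to flow curves of $X_{H_N}$, preserving the time parametrization, and it maps the level set $\{H_M=\tfrac12\}$ to $\{H_N=\tfrac12\}$. A closed geodesic of $M$ of length $\ell$, parametrized by arc length, lifts via $\mathcal{L}$ to a periodic characteristic curve of period $\ell$ (the arc-length parametrization makes the flow-curve parametrization agree with length, by the Proposition); $\Phi$ sends it to a periodic flow curve of $H_N$ of energy $\tfrac12$ and the same period $\ell$, which by the ``moreover'' clause of the Proposition is the $\mathcal{L}$-lift of a closed geodesic of $N$ of length $\ell$. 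Thus the length spectrum of $M$ is contained in that of $N$; applying the same argument to $\Phi^{-1}$ gives equality.

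For the contact case: $\Phi:S^*M\to S^*N$ is a diffeomorphism with $\Phi^*\alpha_N=\alpha_M$. A contactomorphism preserving the contact form preserves the Reeb vector field, $\Phi_*\mathcal{R}_{\alpha_M}=\mathcal{R}_{\alpha_N}$ (because the Reeb field is characterized by $\iota_{\mathcal{R}}\alpha=1$, $\iota_{\mathcal{R}}d\alpha=0$, and both conditions are preserved under pullback by $\Phi$). Hence $\Phi$ carries Reeb flow curves to Reeb flow curves with the same parametrization, and by condition 3 of the Proposition periodic Reeb orbits on $S^*M$ of period $\ell$ correspond exactly to closed geodesics of $M$ of length $\ell$; as before, $\Phi$ and $\Phi^{-1}$ together give equality of length spectra.

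Finally, for the symmetric statement, suppose $M,N$ carry antipodal maps with $\Phi R_M=R_N\Phi$. A closed geodesic $\gamma$ of $M$ is symmetric iff its image is invariant under $R_M$, equivalently (passing to lifts) iff the corresponding periodic characteristic/Reeb curve is invariant under the induced action of $R_M$ on $T^*M$ (resp.\ $S^*M$); here one uses that $R_M$, being an isometry, commutes with $\mathcal{L}$ and with the Hamiltonian/Reeb dynamics. The intertwining relation $\Phi R_M=R_N\Phi$ then shows $\Phi$ sends $R_M$-invariant periodic orbits to $R_N$-invariant ones of the same period, so the symmetric length spectra coincide. I expect the only mildly delicate point to be bookkeeping the parametrizations — making sure ``period of the flow curve'' equals ``length of the geodesic'' throughout — but this is exactly what the arc-length clause of the Proposition guarantees, so there is no real obstacle; the corollary is essentially a formal consequence.
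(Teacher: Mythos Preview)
Your proposal is correct and is exactly the argument the paper has in mind: the corollary is stated without proof precisely because it is a formal consequence of the preceding Proposition, and you have spelled out the standard details (a symplectomorphism respecting the Hamiltonians intertwines Hamiltonian flows; a diffeomorphism pulling back the contact form intertwines Reeb flows; period equals length by the arc-length clause). There is nothing to add.
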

\begin{rem}
\label{omega_implies_alpha}It is easy to verify that, given two 2-homogeneous
Hamiltonian functions $H_{M}$, $H_{N}$ on $T^{*}M$ and $T^{*}N$
respectively, and a symplectomorphism between open conic subsets $\Phi:C_{M}\to C_{N}$,
where $C_{M}\subset T^{*}M$ and $C_{N}\subset T^{*}N$, such that
$\Phi^{*}H_{N}=H_{M}$, then it must necessarily preserve the canonic
one-form: $\mbox{\ensuremath{\Phi}}^{*}\alpha_{N}=\alpha_{M}$. 
\end{rem}

\section{Quotient girth\label{sec:Projective-girth}}

\subsection{Definitions and basic properties\label{sub:Definitions-and-basic}}

Let us first introduce some notation. For a normed space $V$, $S(V)$
denotes the unit sphere, and $B=B(V)$ the unit ball of $V$. For
$q\in S(V)$, the Legendre transform $\mathcal{L}_{V}(q)\in S(V^{*})$
denotes the unique covector $\xi$ for which $\{x\in V:\xi(x)=1\}$
is the tangent hyperplane to $S(V)$ at $q$. When no confusion can
arise, we write $\mathcal{L}$ instead of $\mathcal{L}_{V}$. By a
convex body, we will always mean the unit ball of some symmetric norm
on $V$. For a convex body $K\subset V$, $V_{K}$ will denote the
normed space $V$ with unit ball $K$. Denote by $\mathcal{K}(n)$
the set of convex bodies in $\R^{n}$ equipped with the Hausdorff
metric.
\begin{defn}
For a pair of convex bodies $K,L\subset V$, denote by $\psi_{L}$
the immersion Finsler structure, induced on $\partial K$ by the embedding
$\partial K\subset V_{L}$, and by $\phi_{L}$ the quotient Finsler
structure on $\partial K$, given by the obvious identification $T_{q}(\partial K)=V_{L}/\langle q\rangle$.
The immersion girth $g_{i}(K;L)$ and the quotient girth $g_{q}(K;L)$
of $K$ with respect to $L$ is the length of the shortest closed
symmetric curve on $\partial K$ with the corresponding metric. \end{defn}
\begin{rem}
For a normed space $V$ with unit ball $B$, denote $\phi_{V}=\phi_{B,B}$
and $g_{q}(V)=g_{q}(B,B)$. The Finsler manifold $(S(V),\phi_{V})$
is an intrinsic invariant of normed spaces. Moreover, for a subspace
$U\subset V$, the intrinsic Finsler structure $\phi_{U}$ on $S(U)$
coincides with the one inherited by the inclusion $S(U)\subset S(V)$. 
\end{rem}

\begin{rem}
The notions of quotient girth and quotient Holmes-Thompson volume
extend easily to any pair of convex bodies $K,L$, without any smoothness
or strict convexity assumptions (see also Remark \ref{Remark Continuity of girth})
Thus all results stated below for the girth and Holmes-Thompson volume
extend to the general case by continuity.
\end{rem}
Let us begin by comparing the immersion and quotient Finsler metrics.
Recall that for a 2-dimensional convex body $K\subset V$, its isoperimetrix
$I_{K}\subset V$ is (up to homothety) the dual body $K^{o}$, under
the identification of $V$ and $V^{*}$ by the volume form on $V$.
\begin{prop}
Let $K,L$ be smooth and weakly strictly convex. For a pair of convex
bodies $K,L\subset V$, one has the inequality $\phi_{L}\leq\psi_{L}$
on $\partial K$. We can describe the case of equality:

\begin{enumerate} 

\item In dimension $n=2$, $(\partial K,\phi_{L})$ and $(\partial K,\psi_{L})$
coincide if and only if $L$ is homothetic to $I_{K}$. In particular,
$\phi_{V}=\psi_{V}$ on $S(V)$ if and only if $S(V)$ is a Radon
curve.

\item In dimension $n\geq3$, $(\partial K,\phi_{L})$ and $(\partial K,\psi_{L})$
coincide if and only if $K$ and $L$ are homothetic ellipsoids.

\end{enumerate}\end{prop}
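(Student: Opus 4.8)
The inequality $\phi_L\le\psi_L$ is immediate: for $v\in T_q(\partial K)$ one has $\phi_L(v)=\inf_{t\in\R}\|v+tq\|_L\le\|v\|_L=\psi_L(v)$ by taking $t=0$. The content is the equality case, and my plan is to reformulate it. Fix $q\in\partial K$ and $0\neq v\in T_q(\partial K)$. Since $\mathcal{L}_K(q)(q)=1$ we have $q\notin T_q(\partial K)$, so the line $t\mapsto v+tq$ avoids the origin, and, $L$ being smooth, $t\mapsto\|v+tq\|_L$ is a smooth convex function; thus $\phi_L(v)=\psi_L(v)$ (that is, $t=0$ minimizes) exactly when the derivative at $0$ vanishes, i.e. when $\langle\mathcal{L}_L(v/\|v\|_L),q\rangle=0$ (the gradient of the $L$-norm at $v$ being $\mathcal{L}_L(v/\|v\|_L)$, where $\mathcal{L}_L$, $\mathcal{L}_K$ denote the Legendre maps of $V_L$, $V_K$). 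Writing $u=v/\|v\|_L\in\partial L$, this says $q\in\ker\mathcal{L}_L(u)=T_u(\partial L)$, i.e. $u$ lies on the shadow boundary $\Gamma_q(L)=\{u\in\partial L:q\in T_u(\partial L)\}$ of $L$ in direction $q$. As $v$ ranges over $T_q(\partial K)\setminus\{0\}$, $u$ ranges over the central section $\partial L\cap T_q(\partial K)$; since $L$ is smooth and weakly strictly convex, the Gauss map of $\partial L$ is a homeomorphism, so both $\partial L\cap T_q(\partial K)$ and $\Gamma_q(L)$ are topological $(n-2)$-spheres in $\partial L$, and by invariance of domain the inclusion $\partial L\cap T_q(\partial K)\subseteq\Gamma_q(L)$ is an equality. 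Hence $\phi_L=\psi_L$ on $\partial K$ iff for every $q\in\partial K$ the shadow boundary of $L$ in direction $q$ is the planar central section $\partial L\cap T_q(\partial K)$; and as $\partial K$ meets every ray, this must hold in every direction.

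For $n\ge3$ I would proceed as follows. By the reformulation, every shadow boundary of $L$ is planar, so by the classical characterization of ellipsoids via flat shadow boundaries (Blaschke; the version for smooth bodies applies since $L$ is smooth), $L$ is an ellipsoid. As the equality $\phi_L=\psi_L$ on $\partial K$ is invariant under applying one $T\in GL(V)$ simultaneously to $K$, $L$ and $V$, we may assume $L=B$, the Euclidean ball. Then $\Gamma_q(B)=\partial B\cap q^{\perp}$, so the reformulation gives $T_q(\partial K)=q^{\perp}$ for every $q\in\partial K$; hence the position vector is Euclidean-normal along $\partial K$, the function $|q|^2$ is locally constant on the connected hypersurface $\partial K$, and $K$ is a Euclidean ball, homothetic to $L$. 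The converse is one line: if $K=rL$ with $L$ an ellipsoid we may take $L=B$, and then $|v+tq|^2=|v|^2+t^2|q|^2$ for $v\perp q$ is minimized at $t=0$, so $\phi_L(v)=|v|=\psi_L(v)$.

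For $n=2$, $T_q(\partial K)$ is a line, $\partial L\cap T_q(\partial K)=\{\pm u_q\}$ with $u_q\in\partial L$ in the tangent direction of $\partial K$ at $q$, and the reformulated condition reads: the tangent of $\partial L$ at $u_q$ is parallel to $q$. The plan has two halves. First, I would verify that $I_K=J(K^o)$, and hence any homothet $cI_K$, satisfies this: differentiating the identity $\mathcal{L}_K(q(s))(q(s))\equiv1$ along $\partial K$ and using $\mathcal{L}_K(q)|_{T_q(\partial K)}=0$ shows $\tfrac{d}{ds}\mathcal{L}_K(q(s))$ annihilates $q(s)$, which under the volume-form identification $J\colon V^*\to V$ says exactly that the tangent of $\partial I_K$ at the point corresponding to $q$ is parallel to $q$ (while that point itself lies in the tangent direction of $\partial K$ at $q$). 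Second, I would show the condition pins down $L$ up to homothety: since $K$ is smooth and weakly strictly convex, the tangent-direction map $\partial K\to\mathbb{RP}^1$ is a homeomorphism, so the condition prescribes the tangent direction of $\partial L$ at its point in every direction, and this direction is never radial (since $q\notin T_q(\partial K)$); writing $\partial L$ in polar form $\rho(\theta)(\cos\theta,\sin\theta)$ the condition becomes a first-order linear ODE $\rho'/\rho=g(\theta)$ with $g$ determined by $K$, whose solutions form a one-parameter family, all of which are closed curves (since $I_K$ is one of them) and hence homothets of $I_K$. Thus $L$ is homothetic to $I_K$. Finally, taking $K=L=B$ and recalling that a planar normed space is Radon precisely when its unit ball is homothetic to its isoperimetrix (see \cite{Th}) yields the last assertion.

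I expect the crux to be the reformulation of the first paragraph: equality $\Leftrightarrow$ every shadow boundary of $L$ equals the central section cut by the corresponding tangent hyperplane of $\partial K$. After that, the case $n\ge3$ is essentially the classical flat-shadow-boundary theorem plus a brief normalization, and the case $n=2$ is an ODE-uniqueness argument whose only delicate point is that the tangent forced on $\partial L$ is never radial — which is exactly the strict star-shapedness of $\partial K$. One should also take care to quote a version of the ellipsoid characterization whose regularity hypotheses are met here.
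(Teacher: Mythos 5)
Your argument is correct and follows the same overall strategy as the paper: reformulate the equality $\phi_L=\psi_L$ as a shadow-boundary condition, invoke Blaschke's flat-shadow-boundary characterization of ellipsoids for $n\ge3$, and reduce $n=2$ to an ODE on the polar representation of $L$, with $I_K$ supplying the solution. Two steps, however, are implemented differently. First, to upgrade the implication $u\in\partial L\cap T_q(\partial K)\Rightarrow q\in T_u(\partial L)$ to an equality of the two $(n-2)$-spheres, you use the Gauss map being a homeomorphism plus invariance of domain; the paper instead appeals to the weak strict convexity of $\partial L$ directly (in effect: in the $2$-plane spanned by $q$ and $u$, weak strict convexity forces the shadow-boundary points to be exactly the two antipodal tangency points, so the containment cannot be proper). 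Both are valid; yours is more topological and arguably easier to justify in full generality, the paper's is shorter. Second, for $n\ge3$, after concluding $L$ is an ellipsoid, you normalize by a single element of $GL(V)$ so that $L=B$ and then read off $T_q(\partial K)=q^\perp$, deducing directly that $|q|^2$ is constant on $\partial K$; the paper instead applies Blaschke a second time (using the symmetry of the condition in $K$ and $L$) to get that $K$ is also an ellipsoid, and then compares the two induced orthogonality relations. Your normalization argument is a clean shortcut that avoids the second invocation of Blaschke. You were also right to flag the key delicate point for $n=2$ — that the prescribed tangent to $\partial L$ is never radial, which is exactly the strict star-shapedness $q\notin T_q(\partial K)$ — and your explicit verification that $I_K=J(K^o)$ satisfies the condition (differentiating $\mathcal{L}_K(q)(q)\equiv1$) supplies the ``existence'' step the paper merely asserts.
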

\begin{proof}
The inequality is obvious. Let us show that $\phi_{L}=\psi_{L}$ if
and only if for all pairs $x\in\partial K$, $y\in\partial L$, $y\in T_{x}(\partial K)\Leftrightarrow x\in T_{y}(\partial L)$. 

Fix $x\in\partial K$, $y\in\partial L$ s.t. $y\in T_{x}(\partial K)$.
Then $\psi_{L}(y)=\|y\|_{L}$, $\phi_{L}(x)=\inf_{t\in\mathbb{R}}\|tx+y\|_{L}$.
Then $\psi_{L}(x)=\phi_{L}(x)$ if and only if $\|y+tx\|_{L}\geq\|y\|_{L}$
for all $t$, i.e. $x\in T_{y}(\partial L)$. Since this is true for
all pairs $x,w$, by the weak strict convexity of $\partial L$ the
reverse implication also follows. Note that the condition on $(K,L)$
is symmetric.

When $n=2$, it is easy to see that the equality condition guarantees
uniqueness of a body $L$ corresponding to $K$: One can write a differential
equation on the polar representation of $L$. Taking $L=I_{K}$ shows
existence, proving the case $n=2$. 

Now assume $n\geq3$. If both $K,L$ are ellipsoids, the Lemma above
applies. In the other direction, it follows from the Lemma that for
any $q\in\partial K$, the shadow boundary of $L$ in the direction
$q$ lies in the hyperplane $T_{q}\partial K\subset V$. By Blaschke's
Theorem, $L$ is an ellipsoid. By symmetry, so is $K$. Thus $K$
and $L$ define two Euclidean structures that induce the same orthogonality
relation. Therefore, they are homothetic.\end{proof}
\begin{prop}
For a normed space with $\dim V=2$ and $B=B(V)$, the quotient girth
satisfies $g_{q}(V)\geq\frac{2}{\pi}\frac{M(B)}{vr(B^{o})^{2}}$,
where $M(B)=|B\times B^{o}|$ is the Mahler volume, and $vr$ denotes
the volume ratio.\end{prop}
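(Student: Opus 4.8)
The plan is to estimate the quotient girth of a 2-dimensional normed space $V$ from below by comparing the quotient Finsler structure $\phi_V$ on $S(V)$ with a suitable Euclidean reference structure, and then invoking a lower bound for the girth of a Euclidean circle together with the relation between the various volumes appearing on the right-hand side. First I would recall that, by Remark \ref{Remark Continuity of girth} and Lemma \ref{continuity}, it suffices to work with a convenient Euclidean structure on $V$; I would fix the John ellipse or the Löwner ellipse of $B$, call it $\mathcal{E}$, so that $\mathcal{E} \subseteq B \subseteq r\mathcal{E}$ with $r$ controlled by the volume ratio $vr(B)$ (in the plane one in fact has $r \le vr(B)^2$ after optimizing, but the precise constant will emerge from the computation). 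Passing to the Euclidean norm $|\cdot|$ induced by $\mathcal{E}$, the quotient structure $\phi_V$ on $S(V)$ is pinched between multiples of the Euclidean quotient structure, and the Euclidean quotient structure on its own unit circle is, up to the identification $T_q S = q^\perp$, just the Euclidean norm on the quotient line — so the Euclidean quotient girth of a circle is computable exactly (it is $2\pi$ times the radius, since the shortest symmetric geodesic is a great "circle", i.e. the circle traversed once between antipodal points).

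Next I would track the constants. The quotient girth of $(S(V),\phi_V)$ is bounded below by $r^{-1}$ (or the appropriate power) times the quotient girth of the Euclidean circle associated with the inscribed ellipse; simultaneously, the Mahler volume $M(B) = |B \times B^o|$ and the volume ratio $vr(B^o)$ are both expressible in terms of how far $B$ and $B^o$ deviate from ellipses. The key algebraic identity to exploit is that for an ellipse $\mathcal{E}$ in the plane, $|\mathcal{E}| \cdot |\mathcal{E}^o| = \pi^2$ (the Mahler volume of an ellipse, with the standard normalization), and that $vr$ is by definition the ratio of the volume of the body to the volume of its maximal (or the fixed reference) inscribed ellipsoid. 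Writing $M(B) = |B|\,|B^o|$, bounding $|B|$ below by $|\mathcal{E}|$ and $|B^o|$ below by $|\mathcal{E}^o/r|= r^{-2}|\mathcal{E}^o|$ — or rather carefully choosing which ellipse to compare to on each side so that the $vr(B^o)$ factor in the denominator exactly absorbs the loss — should collapse $\frac{M(B)}{vr(B^o)^2}$ to something comparable to $\pi^2$ times a radius squared, and the remaining $\frac{2}{\pi}$ then matches $2\pi$ divided by $\pi^2$. So the inequality $g_q(V) \ge \frac{2}{\pi}\frac{M(B)}{vr(B^o)^2}$ reduces to the Euclidean base case plus bookkeeping.

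I would organize the argument as: (1) fix a reference ellipse (I expect the right choice is the John ellipse of $B^o$, since the denominator features $vr(B^o)$), let $\mathcal{E} = $ that ellipse and $r = vr(B^o)$ so that $\mathcal{E} \subseteq B^o \subseteq$ a dilate, dually giving a pinching of $B$; (2) compute $g_q$ for the Euclidean structure attached to $\mathcal{E}^o$ exactly, getting $2\pi \cdot (\text{radius})$, equivalently $2\pi$ after a normalizing affine change; (3) apply Lemma \ref{continuity} (or a direct length comparison as in its proof) to transfer the pinching from the metrics to the girths, losing a factor of $r^{\pm 1}$ on each tangent line, hence $r^{\pm 2}$ or so overall; (4) rewrite $M(B)$ as $|B|\,|B^o|$ and bound each factor by the corresponding ellipse volume adjusted by $vr(B^o)$, using $|\mathcal{E}|\,|\mathcal{E}^o| = \pi^2$; (5) assemble, checking that the powers of $vr(B^o)$ and $\pi$ line up to yield exactly $\frac{2}{\pi}\frac{M(B)}{vr(B^o)^2}$.

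The main obstacle I anticipate is step (4)–(5): matching the constants and the exponent of $vr(B^o)$ precisely, rather than up to an unspecified dimensional constant. In particular the inequality as stated is sharp-looking (equality presumably for ellipses, where $g_q = 2\pi/\ldots$, $M = \pi^2/\ldots$, $vr = 1$), so I cannot afford to be wasteful: each comparison — ellipse vs. $B$, ellipse vs. $B^o$, metric pinching vs. girth pinching — must be done with the extremal choice of ellipse, and I expect one needs the self-dual role of the John/Löwner pair and the fact that in dimension $2$ the loss in the girth comparison is exactly the product of the pinching constants on the two ends of a diameter, not their square. Verifying that the $\frac{2}{\pi}$ is the correct (and best) constant, and that no better power of $vr(B^o)$ is available from this method, is where the real care lies; everything else is the Euclidean computation of the quotient girth of a circle, which is elementary.
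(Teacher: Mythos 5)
Your proposal takes a genuinely different route from the paper, and there is a real gap in it that cannot be patched by bookkeeping.

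The paper does not use a two-sided pinching argument at all. After reducing to smooth, strictly convex $B$, it parametrizes $\partial B$ by angle $\alpha$ (in Euclidean coordinates coming from an ellipse $E\supset B$), derives an explicit formula for the quotient girth,
\[
g_{q}(V)=\int_{0}^{2\pi}\frac{\det\bigl(\gamma(\alpha),\dot{\gamma}(\alpha)\bigr)}{\det\bigl(\gamma(\alpha),\gamma(\beta(\alpha))\bigr)}\,d\alpha,
\]
computes the numerator integral exactly (it is $2|B|$), and bounds the denominator pointwise by $1$ using \emph{only} the one-sided containment $B\subset E$, so that $|\gamma(\alpha)|,|\gamma(\beta)|\le 1$. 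This gives $g_{q}(V)\ge 2\pi\max_{E\supset B}|B|/|E|$, which equals $\frac{2}{\pi}\frac{M(B)}{vr(B^{o})^{2}}$ after the elementary dualization at the end. No comparison of Finsler metrics, no Lemma \ref{continuity}, no inscribed ellipse is used.

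Your pinching scheme $\mathcal{E}\subset B\subset r\mathcal{E}$ plus Lemma \ref{continuity} would, at best, give a bound of the form $g_{q}(V)\ge \tfrac{2\pi}{r}$ with $r$ a Banach--Mazur-type distance, which is a different quantity from $\frac{2}{\pi}\frac{M(B)}{vr(B^{o})^{2}}=2\pi\,|B|/|E_{\min}(B)|$, and the latter does not follow from the former. In fact the relation you propose to rely on, ``$r\le vr(B)^{2}$,'' is false: for the square $Q$, John's theorem gives $r=\sqrt{2}\approx 1.414$, while $vr(Q)^{2}=|Q|/|E_{\mathrm{John}}(Q)|=4/\pi\approx 1.273$, so $r>vr(B)^{2}$. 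More structurally, a two-sided metric comparison can only control the worst-case directional distortion, whereas the claimed inequality is an averaged statement: the girth is an integral over directions, the numerator is an exact area, and the loss is confined to a denominator bounded by $1$ direction-by-direction. That averaging is precisely what the one-sided containment and the explicit integral formula exploit, and what a global pinching of the Finsler metric throws away. Step (2) of your outline (the quotient girth of a Euclidean circle is $2\pi$) is correct, but steps (3)--(5), which you yourself flag as the main obstacle, do not close; you would need to replace the pinching by the integral formula to get the stated constant.
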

\begin{proof}
By continuity of all magnitudes in the inequality in $\mathcal{K}(n)$
, we may assume $B(V)$ is smooth and strictly convex. Choose some
ellipsoid $E\supset B$, which defines a Euclidean structure. Fix
some orthonormal coordinates in $V$. Denote $\partial B=\gamma(\alpha)=(x(\alpha),y(\alpha))$,
where $\alpha$ is the angle measured counterclockwise from some reference
direction. Let $\beta=\beta(\alpha)$ be the angle of the point on
$\gamma$ such that $\dot{\gamma}(\beta)||(-\gamma(\alpha))$ (positively
parallel). Solving $\dot{\gamma}(\alpha)=s(\alpha)\gamma(\beta(\alpha))+t(\alpha)\gamma(\alpha)$,
we get 
\[
\phi_{V}(\dot{\gamma}(\alpha))=s(\alpha)=\frac{\dot{y}(\alpha)x(\alpha)-\dot{x}(\alpha)y(\alpha)}{y(\beta)x(\alpha)-x(\beta)y(\alpha)}
\]
Both numerator and denominator are positive. Thus 
\[
g_{q}(V)=\int_{0}^{2\pi}s(\alpha)d\alpha=\int_{0}^{2\pi}\frac{\det(\gamma(\alpha),\dot{\gamma}(\alpha))}{\det(\gamma(\alpha),\gamma(\beta))}d\alpha
\]
The denominator is bounded from above by $|\det(\gamma(\alpha),\gamma(\beta))|\leq|\gamma(\alpha)||\gamma(\beta)|\leq1$.
Therefore, 
\[
g_{q}(V)\geq\int_{0}^{2\pi}\det(\gamma(\alpha),\dot{\gamma}(\alpha))d\alpha=2Area_{E}(B)
\]
where $Area_{E}$ denotes the Lebesgue measure, normalized so that
$Area_{E}(E)=\pi$. It remains to choose the optimal $E$: 
\[
g_{q}(V)\geq2\pi\max_{E\supset B}\frac{|B|}{|E|}=2\pi\max_{E^{o}\subset B^{o}}\frac{|B\times B^{o}|}{|E\times E^{o}|}\frac{|E^{o}|}{|B^{o}|}=\frac{2}{\pi}\frac{M(B)}{vr(B^{o})^{2}}
\]
\end{proof}
\begin{cor}
For all 2-dimensional normed spaces $V$, $4<g_{q}(V)<8$. \end{cor}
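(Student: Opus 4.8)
The plan is to derive the lower bound from the preceding Proposition together with the sharp forms of Mahler's and John's inequalities, and the upper bound from the pointwise comparison $\phi_{V}\le\psi_{V}$ together with Go\l \k{a}b's theorem on the perimeter of the unit circle; in both cases the only body that could keep an estimate from being strict is the parallelogram, which I would dispose of by an explicit computation. As a preliminary observation I would note that in dimension two the quotient girth is simply the total $\phi_{V}$-length of $S(V)$: by Remark \ref{remk:girth_is_simple} it equals $2\min_{p}\mathrm{dist}(p,-p)$, while the symmetry of the norm makes the antipodal map $R=-\mathrm{id}$ a $\phi_{V}$-isometry of $S(V)$ which, being orientation-preserving on the circle, interchanges the two arcs of $S(V)$ joining $p$ to $-p$; these arcs then have equal $\phi_{V}$-length, so $\mathrm{dist}(p,-p)$ is half the $\phi_{V}$-perimeter for every $p$ and $g_{q}(V)$ equals that perimeter.

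For the lower bound, the Proposition gives $g_{q}(V)\ge\frac{2}{\pi}\cdot\frac{M(B)}{vr(B^{o})^{2}}$. Since $vr(K)^{2}\le4/\pi$ for every symmetric planar convex body $K$ --- a consequence of John's theorem, with equality only when $K$ is a parallelogram --- the right-hand side is at least $\frac{1}{2}M(B)$; and by Mahler's inequality in the plane $M(B)=|B|\,|B^{o}|\ge8$, again with equality only for parallelograms. Hence $g_{q}(V)\ge4$, and equality would force $B$ (hence also $B^{o}$) to be a parallelogram.

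To rule out $g_{q}(V)=4$ I would treat the parallelogram case by hand. Since $g_{q}$ is a linear invariant of the unit ball and every symmetric parallelogram is a linear image of the square $[-1,1]^{2}$, it suffices to evaluate $g_{q}$ for the square. Along the edge $\{(1,y):|y|\le1\}$, whose tangent direction is $(0,1)$, one has $\phi_{V}\big((0,1)\big)=\inf_{t\in\R}\max(|t|,|1+ty|)=\frac{1}{1+|y|}$, so that edge has $\phi_{V}$-length $\int_{-1}^{1}\frac{dy}{1+|y|}=2\ln2$; summing over the four edges, $g_{q}(V)=8\ln2$, which lies strictly between $4$ and $8$. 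Therefore $g_{q}(V)>4$ for every $V$.

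For the upper bound the inequality $\phi_{V}\le\psi_{V}$ on $S(V)$ is immediate (take $t=0$ in the infimum defining $\phi_{V}$), so $g_{q}(V)$, being the $\phi_{V}$-perimeter of $S(V)$, is at most the $\psi_{V}$-perimeter, i.e.\ the perimeter of the unit circle measured in its own norm; by Go\l \k{a}b's theorem (see \cite{Th}) the latter is at most $8$, with equality only for parallelograms. For non-parallelograms this already gives $g_{q}(V)<8$, while for parallelograms $g_{q}(V)=8\ln2<8$ by the computation above (consistently, $\phi_{V}<\psi_{V}$ on the edges of the square, so the first inequality is in fact strict). Hence $g_{q}(V)<8$ in every case. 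The one point I expect to require care is the bookkeeping of equality cases --- making sure Mahler's and John's inequalities can be simultaneously sharp only at the parallelogram, so that the explicit value $8\ln2$ suffices to promote $4\le g_{q}(V)\le8$ to strict inequalities; the rest is routine.
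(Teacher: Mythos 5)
Your argument is essentially the paper's own: use the Proposition together with Mahler's inequality and the sharp planar volume-ratio bound for the lower estimate, use $\phi_V\le\psi_V$ together with the bound on the self-perimeter of the unit circle for the upper estimate, and dispose of the parallelogram by appealing to the explicit value $8\log 2$. One attribution needs fixing: the inequality $vr(K)^2\le 4/\pi$ for symmetric planar bodies, with equality only for parallelograms, is \emph{not} a consequence of John's theorem --- John only yields $K\subset\sqrt{2}\,\mathcal{E}_K$, hence $vr(K)^2\le 2$, which would only give $g_q(V)\ge 8/\pi$, far short of $4$. The sharp bound is Ball's reverse isoperimetric inequality \cite{Ba}, which is what the paper cites. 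With that corrected, the rest is fine; the only cosmetic difference from the paper is that you rule out the parallelogram via the explicit value $8\log 2>4$ whereas the paper instead observes directly that the inequality in the Proposition is itself strict for the square, but both settle the equality case.
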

\begin{proof}
One inequality is obvious: $g_{q}(V)\leq g_{i}(V)\leq8$ and $g_{i}(V)=8$
only for the parallelogram \cite{Sc}, which has quotient girth equal
to $8\log2$ (see Appendix \ref{sub:The-projective-girth of the square}),
so in fact $g_{q}(V)<8$. For the other inequality, note that by Mahler's
conjecture for the plane \cite{Ma},\cite{Re}, $M(B)$ is uniquely
minimized by the square, while by Ball's theorem \cite{Ba} (and since
the dual of a square is a square), $vr(B^{o})$ is maximized for the
square, and the inequality above is strict for the square. Thus 
\[
g_{q}(V)>\frac{2}{\pi}\frac{8}{4/\pi}=4
\]
 \end{proof}
\begin{rem}
It seems plausible that in fact $8\log2\leq g_{q}(V)\leq2\pi$ when
$\dim V=2$, the extremal cases being the square (see \ref{sub:The-projective-girth of the square}
for the computation of its quotient girth) and the circle.
\end{rem}
From Lemma \ref{continuity} we get
\begin{cor}
\label{cor:girth continuity} $g_{q}(K;L)$ is continuous on $\mathcal{K}(n)\times\mathcal{K}(n)$.
\end{cor}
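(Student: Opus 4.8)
The plan is to push the two extrinsically defined metrics onto one fixed manifold, the round sphere $S^{n-1}$, and then quote Lemma \ref{continuity}. Since $\mathcal{K}(n)\times\mathcal{K}(n)$ is a metric space it suffices to prove sequential continuity, so fix $(K,L)$ and sequences $K_j\to K$, $L_j\to L$ in the Hausdorff metric. All bodies involved are symmetric, hence contain the origin in their interior, so each radial map $r_{K_j}:S^{n-1}\to\partial K_j$, $u\mapsto u/\|u\|_{K_j}$, is a homeomorphism intertwining the standard antipodal map $u\mapsto -u$ with the antipodal map $q\mapsto -q$ of $\partial K_j$.

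First I would compute the pull-back $\tilde\phi_{K_j,L_j}:=r_{K_j}^{*}\phi_{L_j}$ of the quotient structure to $S^{n-1}$. Parametrizing $\partial K_j$ by $t\mapsto\gamma(t)/\|\gamma(t)\|_{K_j}$ for a curve $\gamma\subset S^{n-1}$ and differentiating, the correction term is a scalar multiple of $\gamma(t)$, hence vanishes in $V/\langle\gamma(t)\rangle$; thus
\[
\tilde\phi_{K_j,L_j}(u,w)=\frac{1}{\|u\|_{K_j}}\,\inf_{s\in\R}\|w+su\|_{L_j},\qquad u\in S^{n-1},\ w\in T_uS^{n-1}.
\]
This formula makes sense for every pair of symmetric convex bodies and manifestly defines a symmetric continuous Finsler structure on $(S^{n-1},-\mathrm{id})$ in the sense of Remark \ref{rem:continuous_finsler}, whose girth equals $g_q(K_j;L_j)$; when $\partial K_j$ is differentiable it coincides with the intrinsic pull-back. (In the general, non-smooth case the displayed formula may simply be taken as the definition of the object whose girth is $g_q(K;L)$, in accordance with the paragraph preceding the corollary.)

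Then I would use the standard equivalence between Hausdorff convergence of convex bodies with $0$ in the interior and uniform convergence of their gauges on $S^{n-1}$: given $\epsilon>0$, for all large $j$ we have $(1+\epsilon)^{-1}\|\cdot\|_{K}\le\|\cdot\|_{K_j}\le(1+\epsilon)\|\cdot\|_{K}$ and $(1+\epsilon)^{-1}\|\cdot\|_{L}\le\|\cdot\|_{L_j}\le(1+\epsilon)\|\cdot\|_{L}$ on all of $V$ (by homogeneity it suffices to check this on $S^{n-1}$, where $\|\cdot\|_K$ is bounded away from $0$ and $\infty$). Passing to the infimum over $s$ preserves the two-sided bound for $\|\cdot\|_{L}$ and taking reciprocals preserves it for $\|\cdot\|_{K}$, so for large $j$
\[
(1+\epsilon)^{-2}\,\tilde\phi_{K,L}\ \le\ \tilde\phi_{K_j,L_j}\ \le\ (1+\epsilon)^{2}\,\tilde\phi_{K,L}
\]
on $TS^{n-1}$. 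Lemma \ref{continuity}, applicable to continuous symmetric Finsler structures by Remarks \ref{rem:continuous_finsler} and \ref{Remark Continuity of girth}, now yields $(1+\epsilon)^{-2}g_q(K;L)\le g_q(K_j;L_j)\le(1+\epsilon)^{2}g_q(K;L)$ for large $j$, and letting $\epsilon\to0$ gives $g_q(K_j;L_j)\to g_q(K;L)$.

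The only point requiring care is the very first reduction: verifying that the radial pull-back genuinely converts the two extrinsically defined quotient girths into girths of the single manifold $S^{n-1}$ carrying the comparable metrics $\tilde\phi_{K,L}$ and $\tilde\phi_{K_j,L_j}$, and in particular the cancellation of the gauge-derivative term modulo $\langle u\rangle$. Once the displayed formula for $\tilde\phi_{K,L}$ is in hand, the rest is the routine uniform-convergence estimate above combined with Lemma \ref{continuity}, and there is no real obstacle.
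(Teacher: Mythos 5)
Your proof is correct and takes the same route the paper intends: the paper simply states ``From Lemma \ref{continuity} we get'' with no further argument, and your write-up is exactly the elaboration that justifies invoking that lemma — pulling the two quotient structures back to the fixed manifold $S^{n-1}$ via the radial maps, noting that the gauge-derivative term is killed modulo $\langle u\rangle$ to obtain the explicit formula $\tilde\phi_{K,L}(u,w)=\|u\|_K^{-1}\inf_s\|w+su\|_L$, and then translating Hausdorff convergence into a multiplicative $(1+\epsilon)$-comparison of gauges so that Lemma \ref{continuity} applies.
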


\subsection{Main theorems}
\begin{thm}
\label{thm:The-co-sphere-bundles}Let $K,L\subset V$ be smooth and
strictly convex bodies. Then there is a diffeomorphism $\Phi:S^{*}(\partial K,\phi_{L})\to S^{*}(\partial L^{o},\phi_{K^{o}})$
respecting the canonic 1-form up to sign. Also, $\Phi$ respects the
antipodal map.\end{thm}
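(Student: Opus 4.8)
The plan is to construct $\Phi$ explicitly by passing through the ambient space $V$ and its dual $V^*$. The key observation is that the quotient Finsler structure on $\partial K$ is built from the quotients $V_L/\langle q\rangle$, so a cotangent vector at $q\in\partial K$ is naturally a linear functional on $V_L/\langle q\rangle$, i.e. an element of $V^*$ that annihilates $q$. Concretely, I would identify
\[
T^*_q(\partial K)=\{\xi\in V^*:\xi(q)=0\},
\]
and check that under this identification the dual norm $\phi_L^*$ at $q$ is simply the restriction of $\|\cdot\|_{L^o}$, independently of $K$ — this is the Legendre/duality content of the quotient construction. Hence $S^*(\partial K,\phi_L)$ sits inside $V\times V^*$ as the set of pairs $(q,\xi)$ with $q\in\partial K$, $\xi\in\partial L^o$, $\xi(q)=0$. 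I expect this description to be symmetric in an almost tautological way once spelled out.

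Next I would write down the analogous description for the target: $S^*(\partial L^o,\phi_{K^o})$ consists of pairs $(\eta,v)$ with $\eta\in\partial L^o\subset V^*$, $v\in\partial (K^o)^o=\partial K\subset V$ (using reflexivity and $(K^o)^o=K$), and $v(\eta)=0$, i.e. $\eta(v)=0$. So the obvious candidate is the flip $\Phi(q,\xi)=(\xi,q)$, i.e. swapping the roles of $V$ and $V^*$. The smoothness and strict convexity hypotheses on $K,L$ guarantee that the incidence sets are smooth submanifolds of the right dimension and that $\Phi$ is a diffeomorphism; the antipodal maps are $(q,\xi)\mapsto(-q,\xi)$ on the source (the antipodal map on $\partial K$ is $q\mapsto -q$, acting trivially on the fibre after the natural identification, or rather by $\xi\mapsto\xi$ since $\xi$ already annihilates $q$ and hence $-q$) and correspondingly $(\eta,v)\mapsto(\eta,-v)$ on the target, so $\Phi$ intertwines them — this should be immediate from the formula.

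The substantive point is the canonical $1$-form. On $T^*(\partial K)$ one has $\alpha_{\partial K}$, the tautological form. I would compute its pullback to the incidence submanifold of $V\times V^*$: at $(q,\xi)$, for a tangent vector $(\dot q,\dot\xi)$ with $\dot q\in T_q(\partial K)$, the value is $\xi(\dot q)$ — one must be slightly careful here because $\xi$ as an element of $T^*_q(\partial K)$ is the class of $\xi\in V^*$ modulo functionals vanishing on $T_q(\partial K)$, but since we have chosen the representative with $\xi(q)=0$ and the pairing is with $\dot q\in T_q(\partial K)$, this is well-defined and equals $\xi(\dot q)$. Likewise the pullback of $\alpha_{\partial L^o}$ to the target incidence set at $(\xi,q)$ evaluated on $(\dot\xi,\dot q)$ equals $q(\dot\xi)=\dot\xi(q)$. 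Now on the incidence set $\xi(q)=0$ identically, so differentiating gives $\dot\xi(q)+\xi(\dot q)=0$, i.e. $\dot\xi(q)=-\xi(\dot q)$. This is exactly the sign flip: $\Phi^*\alpha_{\partial L^o}=-\alpha_{\partial K}$.

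The main obstacle I anticipate is bookkeeping rather than conceptual: being scrupulous about the various canonical identifications — $T_q(\partial K)\cong V/\langle q\rangle$, its dual with the annihilator of $q$ in $V^*$, $(K^o)^o=K$, and the compatibility of the dual norm $\phi_L^*$ with $\|\cdot\|_{L^o}$ under all of these — so that the clean flip formula $\Phi(q,\xi)=(\xi,q)$ really does land in the right cosphere bundle and the $1$-form computation is legitimate. In particular I would want to verify carefully that $\phi_{L,q}^*(\xi)=\|\xi\|_{L^o}$ for $\xi$ annihilating $q$: by definition $\phi_{L,q}(v+\langle q\rangle)=\inf_t\|v+tq\|_L$, whose dual norm on the annihilator of $q$ is the restriction of the dual of $\|\cdot\|_L$, namely $\|\cdot\|_{L^o}$, since dualizing a quotient norm gives the subspace norm on the annihilator. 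Once this is pinned down, everything else falls out, and the diffeomorphism property follows from smoothness and strict convexity ensuring the Legendre maps are diffeomorphisms and the incidence conditions cut out smooth manifolds transversally.
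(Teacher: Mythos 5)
Your proposal is essentially the paper's own proof: both realize $S^{*}(\partial K,\phi_{L})$ and $S^{*}(\partial L^{o},\phi_{K^{o}})$ as the incidence variety $\{(q,p)\in\partial K\times\partial L^{o}:\langle p,q\rangle=0\}$ inside $V\times V^{*}$, take $\Phi$ to be the flip $(q,p)\mapsto(p,q)$, and obtain the sign change on the tautological forms from differentiating the incidence relation $\langle p,q\rangle\equiv 0$ (the paper phrases this as $p\,dq+q\,dp$ vanishing on $Z$, which is the same computation).

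One small slip in your treatment of the antipodal map: the cotangent lift of $R(q)=-q$ acts on the fibre $T_{q}^{*}\partial K\cong\mathrm{Ann}(q)\subset V^{*}$ by $\xi\mapsto-\xi$, not by the identity. Although $\mathrm{Ann}(q)=\mathrm{Ann}(-q)$ as subsets of $V^{*}$, the lift pulls covectors back through $(dR_{q})^{-1}=-\mathrm{Id}$, which negates them; and your formula $(\eta,v)\mapsto(\eta,-v)$ for the target antipodal map cannot be right, since it does not move the base point $\eta\in\partial L^{o}$. The correct formulas are $(q,\xi)\mapsto(-q,-\xi)$ on the source and $(\eta,v)\mapsto(-\eta,-v)$ on the target, which the flip still intertwines, so your final conclusion is unaffected.
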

\begin{proof}
The construction of the diffeomorphism between the corresponding co-sphere
bundles is reminiscent of the one in \cite{AP}. Observe that for
$q\in\partial K$ we have the isometric embedding $T_{q}^{*}(\partial K)=(V_{L}/q)^{*}\hookrightarrow V_{L^{o}}^{*}$.
In particular, $S_{q}^{*}(\partial K)\hookrightarrow\partial L^{o}$
and in fact $S_{q}^{*}(\partial K)=\{p\in\partial L^{o}:\langle p,q\rangle=0\}$.
Define $Z\subset V\times V^{*}$ by $Z=\{(q,p):p(q)=0\}$. We then
can identify $S^{*}(\partial K)\mbox{\ensuremath{\simeq}}(\partial K\times\partial L^{o})\cap Z$,
and by symmetry also $\mbox{\ensuremath{S^{*}(\partial L^{o})\mbox{\ensuremath{\simeq}}(\partial K\times\partial L^{o})\cap Z}}$.
Now observe that the forms $\alpha_{1}=pdq$ and $\alpha_{2}=qdp$
defined on $V\times V^{*}$ satisfy $\alpha_{1}\Big|_{Z}+\alpha_{2}\Big|_{Z}=0$,
and restrict to the canonic 1-forms on $S^{*}(\partial K)$ and $S^{*}(\partial L^{o})$,
respectively. Thus $\Phi(q,p)=(p,q)$ is the required map.\end{proof}
\begin{rem}
We see from the proof above that $\psi$ is in some sense dual to
$\phi$: one has a natural isometric isomorphism of the normed bundles
$T(\partial K,\psi_{L})\rightarrow T^{*}(\partial K^{o},\phi_{L^{o}})$,
given by the Legendre transform and the fiberwise isometric identification
$T_{q}(\partial K,\psi_{L})=T_{\mathcal{L}(q)}^{*}(\partial K^{o},\phi_{L^{o}})$.
\end{rem}
As a corollary we get
\begin{thm}
\label{thm:(Dual-spheres-have}(Dual spheres have equal quotient girth)
Let $K,L\subset V$ be convex bodies. Then \textup{$(\partial K,\phi_{L})$}
and $(\partial L^{o},\phi_{K^{o}})$ have equal girth and Holmes-Thompson
volume. If $K,L$ are smooth and strictly convex, then their length
spectra and symmetric length spectra coincide. In particular, for
any normed space $V$ we get $g_{q}(V)=g_{q}(V^{*})$.
\end{thm}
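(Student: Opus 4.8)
The plan is to deduce the entire statement from Theorem~\ref{thm:The-co-sphere-bundles} by standard symplectic and Finsler bookkeeping, treating non-smooth bodies at the end by continuity. First assume $K$ and $L$ are smooth and strictly convex. Theorem~\ref{thm:The-co-sphere-bundles} provides a diffeomorphism $\Phi\colon S^*(\partial K,\phi_L)\to S^*(\partial L^o,\phi_{K^o})$ with $\Phi^*\alpha_{K^o}=\varepsilon\,\alpha_L$ for a fixed sign $\varepsilon=\pm1$ and with $\Phi R=R\Phi$. A one-line computation from the defining relations of the Reeb field shows that $\Phi$ then carries the Reeb field of the source to $\varepsilon$ times the Reeb field of the target, so it sends closed characteristic curves to closed characteristic curves of the same period, reversing the time parameter when $\varepsilon=-1$. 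By the Proposition of Section~\ref{sec:Background-from-Finsler} identifying characteristic curves with arc-length geodesics, closed characteristics project to closed geodesics of equal length, so --- as in Corollary~\ref{symplectomorphism_implies_spectrum}, up to the harmless sign --- the length spectra of $(\partial K,\phi_L)$ and $(\partial L^o,\phi_{K^o})$ coincide; since $\Phi$ intertwines the antipodal maps, so do the symmetric length spectra. Because $\partial K$ is compact, Remark~\ref{remk:girth_is_simple} identifies the girth with the minimum of the symmetric length spectrum, so the girths agree as well.

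For the Holmes--Thompson volumes, still assuming $K,L$ smooth and strictly convex, I would pass through the contact boundary rather than look for a symplectomorphism of the co-ball bundles --- there is none in sight, since over $\partial K$ the co-ball bundle is full in the fibre direction while over $\partial L^o$ it is full in the base direction. With $n=\dim\partial K$, Stokes' theorem and the identity $d\bigl(\alpha\wedge(d\alpha)^{n-1}\bigr)=(d\alpha)^n$ give $n!\,v_{HT}(\partial K,\phi_L)=\int_{B^*(\partial K,\phi_L)}(d\alpha_L)^n=\int_{S^*(\partial K,\phi_L)}\alpha_L\wedge(d\alpha_L)^{n-1}$, and likewise for $\partial L^o$. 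Since $\Phi^*\alpha_{K^o}=\varepsilon\,\alpha_L$ forces $\Phi^*\bigl(\alpha_{K^o}\wedge(d\alpha_{K^o})^{n-1}\bigr)=\varepsilon^{\,n}\,\alpha_L\wedge(d\alpha_L)^{n-1}$, and both sides are canonically positive volume forms on the respective co-sphere bundles, the two integrals coincide regardless of $\varepsilon$ and of the parity of $n$; hence $v_{HT}(\partial K,\phi_L)=v_{HT}(\partial L^o,\phi_{K^o})$.

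To drop smoothness and strict convexity, approximate $K$ and $L$ in the Hausdorff metric by smooth strictly convex bodies: $(K,L)\mapsto(L^o,K^o)$ is continuous on $\mathcal{K}(n)\times\mathcal{K}(n)$, and both the girth (Corollary~\ref{cor:girth continuity}) and the Holmes--Thompson volume depend continuously on the (merely continuous) Finsler datum, so the two equalities above persist in the limit for all convex bodies $K,L$. Specializing to $K=L=B(V)$, so that $L^o=K^o=B(V^*)$, $(\partial K,\phi_L)=(S(V),\phi_V)$ and $(\partial L^o,\phi_{K^o})=(S(V^*),\phi_{V^*})$, yields $g_q(V)=g_q(V^*)$. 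The only genuinely substantial ingredient is Theorem~\ref{thm:The-co-sphere-bundles} itself; granting it, the points needing care are keeping the sign of the canonic one-form honest throughout and, in the volume statement, substituting the Stokes computation for the missing co-ball symplectomorphism.
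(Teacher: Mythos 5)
Your argument is correct and, for the length spectra, girth, and the limiting step, it is essentially the chain the paper intends by its one-line ``as a corollary'': Theorem~\ref{thm:The-co-sphere-bundles} feeds into Corollary~\ref{symplectomorphism_implies_spectrum}, the girth is the minimum of the symmetric length spectrum, and Corollary~\ref{cor:girth continuity} together with the continuity of $v_{HT}$ handles non-smooth bodies. Your explicit bookkeeping of the sign $\varepsilon$ (Theorem~\ref{thm:The-co-sphere-bundles} in fact yields $\Phi^*\alpha_{K^o}=-\alpha_L$) and the resulting time-reversal of the Reeb flow is a worthwhile clarification, since Corollary~\ref{symplectomorphism_implies_spectrum} as stated only covers $\Phi^*\alpha_N=\alpha_M$.

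For the Holmes--Thompson volume your Stokes computation is a valid alternative, but the justification you offer for choosing it --- that there is ``no co-ball symplectomorphism in sight'' --- is mistaken. Extend $\Phi(q,p)=(p,q)$ positively homogeneously along the cotangent fibers: for $\xi=rp\in T^*_q(\partial K)$ with $p\in S^*_q(\partial K)\subset\partial L^o$ and $r=\phi^*_L(\xi)\in(0,1]$, set $\hat\Phi(q,\xi)=(p,rq)$. Since $p(rq)=0$ and $rq\in K$ precisely when $r\le 1$, this carries $B^*(\partial K,\phi_L)\setminus 0$ diffeomorphically onto $B^*(\partial L^o,\phi_{K^o})\setminus 0$, and by degree-one homogeneity of the Liouville form it satisfies $\hat\Phi^*\alpha_{K^o}=-\alpha_L$, hence preserves the Liouville density $\frac{1}{n!}|\omega^n|$ and gives the equality of volumes directly. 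This is in fact the cotangent-bundle symplectomorphism the paper records in the remark following Theorem~\ref{thm:Holmes-Thompson} for the $k=1$ Grassmannian case. Your boundary-integral route reaches the same conclusion, so nothing in your proof is broken, but the extension you dismissed does exist and is the more structural way to see the volume equality.
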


\subsection{The associated double fibration}

The following is a geometric observation relating the immersion and
quotient settings, which is not used elsewhere in the paper. 

In the proof of the original girth conjecture in \cite{AP}, the following
double fibration appears naturally \[\xymatrix{& T\ar[dl] \ar[dr]&\\ \partial K & &\partial L^o }\]
where $T\subset\partial K\times\partial L^{o}$ consists of all pairs
$(q,p)$ such that the pairing $T_{q}\partial K\times T_{p}\partial L^{o}\to\mathbb{R}$
is degenerate. 

In the quotient girth setting, a different fibration appears:

\[\xymatrix{& P\ar[dl] \ar[dr]&\\ \partial L & &\partial K^o }\]
where $P=\{(q,p)\in\partial L\times\partial K^{o}:\langle q,p\rangle=0\}$
(note that we exchanged the roles of $K,L$ here). There is in fact
a natural diffeomorphism of the double fibrations: Let $B:\partial K\times\partial L^{o}\to\partial L\times\partial K^{o}$
be given by $B(q,p)=(\mathcal{L}p,\mathcal{L}q)$. One has then: $T_{q}\partial K\times T_{p}\partial L^{o}\to\mathbb{R}$
degenerate $\iff$ $\mathcal{L}q\in T_{p}\partial L^{o}$ $\iff$
$\langle\mathcal{L}q,\mathcal{L}p\rangle=0$. Thus $B:T\to P$ is
a diffeomorphism, and it respects the double fibration structure.

\section{The oriented Grassmannian}

\subsection{Background}

We begin by recalling some basic constructions, and fixing notation.

\subsubsection{Oriented Grassmannians}

The oriented Grassmannian $\tilde{G}(V,k)$, which is the set of oriented
$k$-dimensional subspaces of $V$, is naturally a smooth manifold.
We also write $\tilde{\mathbb{P}}(V)=\tilde{G}(V,1)$, which is the
projective space of oriented lines in $V$. In the following, we always
assume that $V$ is an oriented vector space.\\

Recall the Plucker embedding $i:\tilde{G}(V,k)\rightarrow\tilde{\mathbb{P}}(\wedge^{k}V)$
given by $i(\Lambda)=p(\wedge^{k}\Lambda)$, where $p:(\wedge^{k}V)\backslash0\rightarrow\tilde{\mathbb{P}}(\wedge^{k}V)$
is the canonic projection. Take $\Lambda\in\tilde{G}(V,k)$, fix a
basis $e_{1},...,e_{k}$ of $\Lambda$, and identify $T_{\Lambda}\tilde{G}(V,k)=T_{i\Lambda}i\big(\tilde{G}(V,k)\big)\simeq Hom(\Lambda,V/\Lambda)$
by assigning $\dot{\gamma}_{f}(0)\in T_{i\Lambda}i\big(\tilde{G}(V,k)\big)$
to $f\in Hom(\Lambda,V/\Lambda)$ through the correspondence $\gamma_{f}:[0,1]\rightarrow i\big(\tilde{G}(V,k)\big)$,
$\gamma_{f}(t)=p\Big((e_{1}+tf(e_{1}))\wedge...\wedge(e_{k}+tf(e_{k}))\Big)$.
Clearly this identification is independent of the choice of $e_{1},...,e_{k}$.
Thus, there is a canonic identification $T_{\Lambda}\tilde{G}(V,k)\simeq Hom(\Lambda,V/\Lambda)$.

\subsubsection{Norms on spaces of operators}

Let $A,B$ be two linear spaces. When given an arbitrary norm $\beta$
on $Hom(A,B)$, one immediately obtains a norm $\overline{\beta}$
on $Hom(B^{*},A^{*})$ by letting $T\mapsto T^{*}$ be an isometry,
and the dual norm $\beta^{*}$ on $Hom(B,A)$, defined by trace duality.
It is immediate that $\overline{\beta^{*}}=\overline{\beta}^{*}$
on $Hom(A^{*},B^{*})$. Note that for a normed space $V$, the nuclear
(projective) norm $\beta=\|\bullet\|_{N}$ on $Hom(V,V)$ satisfies
that $\overline{\beta}$ is again the nuclear norm on $Hom(V^{*},V^{*})$.\\

Given a norm $\beta$ on $Hom(V,V)$ and a subspace $\Lambda\subset V$,
one has the natural inclusion map $Hom(V/\Lambda,\Lambda)\subset Hom(V,V)$,
and a quotient map $Hom(V,V)\twoheadrightarrow Hom(\Lambda,V/\Lambda)$.
Denote the induced subspace and quotient space norms by $\beta_{i}$
and $\beta_{\pi}$, respectively. It is immediate that $\overline{\beta_{\pi}}=\overline{\beta}_{\pi}$
on $Hom((V/\Lambda)^{*},\Lambda^{*})$, while $\overline{\beta_{i}}=\overline{\beta}_{i}$
on $Hom(\Lambda^{*},(V/\Lambda)^{*})$, $(\beta_{i})^{*}=(\beta^{*})_{\pi}$
on $Hom(\Lambda,V/\Lambda)$, and $(\beta_{\pi})^{*}=(\beta^{*})_{i}$
on $Hom(V/\Lambda,\Lambda)$.

\begin{comment}
A simple verification of definitions reveals the following:
\begin{prop}
Let $U\subset V$ be a pair of normed spaces. Then $(\tilde{G}(U,k),\phi_{\alpha,K,L})\subset(\tilde{G}(V,k),\phi_{\alpha,K,L})$
isometrically if $\alpha$ is injective on the right, and $(\tilde{G}(V/U,k),\phi_{\alpha,K/U,L/U})\subset(\tilde{G}(V,k+\dim U),\phi_{\alpha,K,L})$
isometrically if $\alpha$ is injective on the left. 
\end{prop}

For a surjection $p:V\rightarrow U$, denote by $W=Ker(p)$ and equip
$U\simeq V/Ker(p)$ with the projection of the norm on $V.$ Then
$\|v\|=\|p(v)\|$ iff $W\subset T_{v}S(V)\Leftrightarrow W\subset Ker\mathcal{L}(v)$.
\end{comment}

\subsection{Definition of the Finsler structure}
\begin{defn}
For an arbitrary norm $\beta$ on $Hom(V,V)$, $(\tilde{G}(V,k),\phi_{\beta})$
is the Finsler manifold which has the quotient norm $\beta_{\pi}$
on the tangent spaces\\
$T_{\Lambda}\tilde{G}(V,k)=Hom(\Lambda,V/\Lambda)$. If $\beta$ is
smooth and strictly convex, we get a smooth Finsler manifold. Orientation
reversal on subspaces defines an antipodal map on $\tilde{G}(V,k)$
which is an isometry of $\phi_{\beta}$.\end{defn}
\begin{rem}
By trace duality, the cotangent spaces
\[
T_{\Lambda}^{*}\tilde{G}(V,k)=Hom(V/\Lambda,\Lambda)\subset Hom(V,V)
\]
 are equipped with the dual norm, which is $(\beta^{*})_{i}$. In
the following, we will often consider a cotangent vector $T\in T_{\Lambda}^{*}\tilde{G}(V,k)$
simply as an element of $Hom(V,V)$.
\end{rem}

\begin{rem}
As before, when we deal with the girth or Holmes-Thompson volume of
the $(\tilde{G}(V,k),\phi_{\beta})$, one can omit smoothness and
strict convexity assumptions on $\beta$.\end{rem}
\begin{example}
For a Euclidean space $V$ and $\beta=\|\|_{HS}$ the Hilbert-Schmidt
norm on $Hom(V,V)$, $(\tilde{G}(V,k);\phi_{HS})$ is the standard
$SO(n)$-invariant Riemannian structure.
\end{example}

\begin{rem}
One can consider also the following more functorial construction:
Let $K,L\subset V$ be two symmetric convex bodies. For a subspace
$\Lambda\subset V$, denote by $\Lambda_{K}$ the normed space $\Lambda$
with unit ball $\Lambda\cap K$. Consider some uniform crossnorm $\alpha$,
that is an assignment of a norm to $A^{*}\otimes B\simeq Hom(A,B)$
for pairs of isometry classes of finite dimensional normed spaces
$A,B$ (see \ref{sub:tensor norms} for a review of crossnorms). Important
examples are the operator (injective) norm $\|\|_{Op}$ and the nuclear
(projective) norm $\|\|_{N}$. Equipping the spaces $Hom(\Lambda_{K},V_{L}/\Lambda)$
with $\alpha$, $\tilde{(G}(V,k);\phi_{\alpha,K,L})$ becomes a Finsler
manifold. If $V$ is a given normed space, we take $K=L=\{\|x\|\leq1\}$
and denote $\phi_{\alpha,K,L}=\phi_{\alpha,V}$.\end{rem}
\begin{example}
For $k=1$, the Finsler structure on $\tilde{(G}(V,1);\phi_{\alpha,K,L})$
is independent of $\alpha$, and the induced Finsler structure on
$\tilde{\mathbb{P}}(V)$ will be denoted $\phi_{K,L}$. It also coincides
with $\tilde{(G}(V,1),\phi_{\beta})$ where $\beta$ is the nuclear
norm (or any other projective crossnorm) on $Hom(V_{K},V_{L})$.
\end{example}

For a smooth convex body $K\subset V$, $\partial K\simeq\tilde{\mathbb{P}}(V)$
as smooth manifolds. We will show that the Finsler manifold constructed
above generalizes the quotient Finsler structure of section \ref{sec:Projective-girth}.
\begin{prop}
As Finsler manifolds, one has $(\tilde{\mathbb{P}}(V),\phi_{K,L})=(\partial K,$$\phi_{L})$.\end{prop}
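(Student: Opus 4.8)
The plan is to unwind both Finsler structures at an arbitrary point and check that the norms on the tangent spaces agree under the standard identification of $\partial K$ with $\tilde{\mathbb{P}}(V)$. First I would fix $q \in \partial K$ and let $\ell = \langle q\rangle \in \tilde{\mathbb{P}}(V)$ be the corresponding oriented line; the smooth identification sends $q$ to $\ell$, and its differential identifies $T_q(\partial K)$ with $T_\ell \tilde{\mathbb{P}}(V) = Hom(\ell, V/\ell)$. Concretely, a tangent vector $v \in T_q(\partial K) \subset V$ (thought of as living in $V/\langle q\rangle$, per the quotient structure) should correspond to the operator $f_v \in Hom(\ell, V/\ell)$ determined by $f_v(q) = v \bmod \langle q\rangle$. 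I would begin by carefully verifying that this is indeed the differential of the diffeomorphism $\partial K \simeq \tilde{\mathbb{P}}(V)$, using the curve description of tangent vectors to the Plücker-embedded Grassmannian given earlier in the paper (with $k=1$ the curve is simply $t \mapsto p(q + t v)$).

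Next I would compute both norms of this tangent vector. On the quotient side, $\phi_L(v) = \inf_{t \in \R}\|v + tq\|_L$, i.e. the quotient norm of $v$ in $V_L/\langle q\rangle$. On the Grassmannian side, by the example in the excerpt the structure $\phi_{K,L}$ on $\tilde{\mathbb{P}}(V)$ is $\phi_\beta$ for $\beta$ the nuclear norm on $Hom(V_K, V_L)$, and the tangent norm is the quotient norm $\beta_\pi$ on $Hom(\ell, V/\ell) = Hom(\langle q\rangle, V/\langle q\rangle)$. So I must show that the nuclear norm on $Hom(V_K, V_L)$, pushed to the quotient $Hom(\langle q\rangle, V/\langle q\rangle)$, evaluated at $f_v$, equals $\inf_t \|v+tq\|_L$. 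The key point is that a rank-one operator $\langle q\rangle \to V$ extending $f_v$ is exactly a map $q \mapsto v + tq$ for some lift, and its nuclear norm as an element of $Hom(V_K, V_L)$ is $\|q\|_{K^*}^{*}$-type pairing times $\|v+tq\|_L$ — but since $q \in \partial K$ and we are extending along the line $\langle q\rangle$ where $\|q\|_K = 1$, the nuclear norm of the rank-one operator $x \mapsto \xi(x)(v+tq)$ with $\xi \in (V_K)^*$, $\xi(q)=1$ chosen minimally is just $\|v+tq\|_L \cdot \|\xi\|_{K^*}$, and the relevant minimal $\xi$ on the line has $(V_K)^*$-norm contributing a factor that I need to track. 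Taking the infimum over extensions (the quotient norm) then gives $\inf_t \|v+tq\|_L$ provided the $\xi$-factor normalizes correctly; this normalization is exactly where the identification $q \in \partial K$ (rather than an arbitrary point of $V$) enters.

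I expect the main obstacle to be bookkeeping rather than conceptual: making the identification $T_q(\partial K) = Hom(\langle q\rangle, V/\langle q\rangle)$ completely explicit and then chasing the definition of the nuclear (projective) crossnorm on $Hom(V_K, V_L)$ through the quotient $Hom(V,V) \twoheadrightarrow Hom(\langle q\rangle, V/\langle q\rangle)$, so that the quotient norm $\beta_\pi(f_v)$ visibly becomes $\inf_{T}\{\|T\|_N : T \text{ extends } f_v\}$, and recognizing that restricting to $q \mapsto v+tq$ (a rank-one map out of a line) collapses the nuclear norm to the single factor $\|v+tq\|_L$. One subtlety to handle with care: one must check the infimum defining $\phi_L$ is attained / well-behaved (it is, by properness of $\|\cdot\|_L$) and that the crossnorm being \emph{projective} is what guarantees the rank-one formula $\|x\mapsto \xi(x)w\|_N = \|\xi\|_{(V_K)^*}\|w\|_L$, which is the identity that drives the whole computation. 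Once these pieces are in place, equality of the two tangent norms at every point is immediate, and since both Finsler structures live on the same underlying smooth manifold via the same diffeomorphism, they coincide as Finsler manifolds.
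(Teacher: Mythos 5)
Your identification of the tangent spaces and of the differential of $\partial K\simeq\tilde{\mathbb{P}}(V)$ is the same as the paper's (the paper writes the curve $\gamma_f(t)=\tfrac{q+tw}{\|q+tw\|_K}$ with $w\in T_q\partial K$ the tangent-hyperplane lift of $v$, computes $\dot\gamma_f(0)=w$, and gets $\phi_L(\dot\gamma_f(0))=\|v\|_{V_L/M}$). Where you diverge is on the Grassmannian side. The paper evaluates $\phi_{K,L}$ through the functorial definition $\phi_{\alpha,K,L}$: since $\dim\Lambda=1$ and $q\in\partial K$ gives $\|q\|_{\Lambda_K}=1$, the crossnorm of $f\in Hom(\Lambda_K,V_L/\Lambda)$ is immediately $\|f(q)\|_{V_L/M}=\|v\|_{V_L/M}$, with no quotient-norm computation at all. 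You instead use the equivalent description $\phi_\beta$ with $\beta$ the nuclear norm on $Hom(V_K,V_L)$ and compute $\beta_\pi(f_v)$ by infimizing over extensions $T\in Hom(V,V)$. That works, but it introduces a step you should be more careful about: restricting to rank-one extensions $\xi\otimes(v+tq)$ gives the \emph{upper} bound $\beta_\pi(f_v)\le\inf_t\|v+tq\|_L$ (here the only input is the crossnorm axiom $\|\xi\otimes w\|_N=\|\xi\|_{K^o}\|w\|_L$ together with $\inf\{\|\xi\|_{K^o}:\xi(q)=1\}=\|q\|_K=1$). For the matching \emph{lower} bound you need to know that arbitrary extensions cannot do better — that is exactly projectivity of $\|\cdot\|_N$, or, more elementarily, the general inequality $\|\cdot\|_{Op}\le\|\cdot\|_N$ gives $\|T\|_N\ge\|T(q)\|_L\ge\inf_t\|v+tq\|_L$ for any extension $T$. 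So your final remark is slightly misattributed: the rank-one identity $\|x\mapsto\xi(x)w\|_N=\|\xi\|_{(V_K)^*}\|w\|_L$ is the crossnorm axiom, valid for every uniform crossnorm; what projectivity (or $\|\cdot\|_{Op}\le\|\cdot\|_N$) buys you is the reverse inequality. With that clarified, your argument is correct; the paper's route via $\phi_{\alpha,K,L}$ is simply shorter because it avoids the quotient-norm infimum entirely.
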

\begin{proof}
Fix $q\in\partial K$, denote $M=\langle q\rangle$, $\xi=\mathcal{L}(q)$,
$W=\{\xi=0\}$. A linear function $f:M\rightarrow V/M$ is uniquely
defined by $v=f(q)$. Let $w\in W$ be the unique vector with $v=Pr_{V/M}(w).$
Then $\phi_{K,L}(f)=\|f\|=\|v\|_{V_{L}/M}$. The curve $\gamma_{f}$
on the sphere is given by $\gamma_{f}(t)=\frac{q+tw}{\|q+tw\|}$ and
\[
\dot{\gamma}_{f}(0)=w-q\frac{d}{dt}\Bigg|_{t=0}\|q+tw\|=w
\]
so that $\phi_{L}(\dot{\gamma}_{f}(0))=\|Pr_{V/M}w\|_{V_{L}/M}=\|v\|_{V_{L}/M}$.
\end{proof}

\subsection{Main theorems}

Recall that $V$ is an oriented vector space.
\begin{prop}
\label{symmetry}For any norm $\beta$ on $Hom(V,V)$, the Finsler
manifolds $(\tilde{G}(V,n-k),\phi_{\beta})$ and $(\tilde{G}(V^{*},k),\phi_{\overline{\beta}})$
are canonically isometric. \end{prop}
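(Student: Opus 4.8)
The plan is to realize the claimed canonical isometry as the \emph{annihilator map}, and then to identify its differential with the operator-adjoint map $T\mapsto T^{*}$ — which is exactly the map built into the definition of $\overline{\beta}$. Equip $V^{*}$ with the dual orientation. For an oriented $(n-k)$-dimensional subspace $\Lambda\subset V$, I would take its annihilator $\Lambda^{\perp}=\{\xi\in V^{*}:\xi|_{\Lambda}=0\}$, a $k$-dimensional subspace of $V^{*}$, oriented by declaring the orientations of $\Lambda$ and of $V/\Lambda$ in the exact sequence $0\to\Lambda\to V\to V/\Lambda\to 0$ to multiply to the orientation of $V$, and using the canonical identification $\Lambda^{\perp}=(V/\Lambda)^{*}$ to transport the dual orientation. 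This defines a smooth map $\iota:\tilde{G}(V,n-k)\to\tilde{G}(V^{*},k)$; running the same construction on $V^{*}$ and invoking $V^{**}=V$ gives its inverse, so $\iota$ is a diffeomorphism, and since reversing the orientation of $\Lambda$ reverses that of $\Lambda^{\perp}$, it intertwines the two orientation-reversal antipodal maps. Along the way I record the two canonical identifications
\[
\Lambda^{\perp}=(V/\Lambda)^{*},\qquad V^{*}/\Lambda^{\perp}=\Lambda^{*},
\]
the second by restriction of covectors to $\Lambda$.

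Next I would compute the differential. Fix $\Lambda$ and choose an auxiliary complement $V=\Lambda\oplus U$, so that $V/\Lambda\cong U$ and, dually, $U^{\perp}\cong\Lambda^{*}\cong V^{*}/\Lambda^{\perp}$. A tangent vector $f\in T_{\Lambda}\tilde{G}(V,n-k)=Hom(\Lambda,V/\Lambda)$, viewed through $U$ as $f:\Lambda\to U$, is represented (in the Plücker identification) by the curve of subspaces $\Lambda_{t}=\{x+tf(x):x\in\Lambda\}$. Writing $\xi=\eta+\zeta$ with $\eta\in\Lambda^{\perp}$, $\zeta\in U^{\perp}$, one checks that $\xi$ annihilates $\Lambda_{t}$ precisely when $\zeta(x)+t\,\eta(f(x))=0$ for all $x\in\Lambda$, i.e. $\zeta=-t(\eta\circ f)$; hence $\Lambda_{t}^{\perp}=\{\eta-t(\eta\circ f):\eta\in\Lambda^{\perp}\}$, whose velocity in $T_{\Lambda^{\perp}}\tilde{G}(V^{*},k)=Hom(\Lambda^{\perp},V^{*}/\Lambda^{\perp})$ is the map $\eta\mapsto-\eta\circ f$. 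Under the canonical identifications above this is exactly $-f^{*}$, where $f^{*}:(V/\Lambda)^{*}\to\Lambda^{*}$ is the adjoint of $f$; the auxiliary $U$ has dropped out, as it must. Thus $d\iota_{\Lambda}(f)=-f^{*}$.

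Finally I would conclude. By definition the Finsler norm of $\phi_{\beta}$ at $\Lambda$ is the quotient norm $\beta_{\pi}$ on $Hom(\Lambda,V/\Lambda)$, while that of $\phi_{\overline{\beta}}$ at $\Lambda^{\perp}$ is $(\overline{\beta})_{\pi}$ on $Hom(\Lambda^{\perp},V^{*}/\Lambda^{\perp})=Hom((V/\Lambda)^{*},\Lambda^{*})$. By the identities recorded in the subsection on norms of operators, $(\overline{\beta})_{\pi}=\overline{\beta_{\pi}}$ on $Hom((V/\Lambda)^{*},\Lambda^{*})$, and by construction $T\mapsto T^{*}$ is an isometry from $(Hom(\Lambda,V/\Lambda),\beta_{\pi})$ onto $(Hom((V/\Lambda)^{*},\Lambda^{*}),\overline{\beta_{\pi}})$. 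Since norms are symmetric, $\phi_{\overline{\beta}}(d\iota_{\Lambda}f)=\overline{\beta_{\pi}}(-f^{*})=\overline{\beta_{\pi}}(f^{*})=\beta_{\pi}(f)=\phi_{\beta}(f)$ for every $f$, so $\iota$ is an isometry compatible with the antipodal maps, which is the assertion.

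I do not expect a genuine obstacle here: the only real work is the bookkeeping of the middle step — verifying that the differential of the annihilator map is the operator adjoint, consistently with the identification $T_{\Lambda}\tilde{G}(V,k)\cong Hom(\Lambda,V/\Lambda)$ — after which the isometry statement is a formal consequence of the definitions of $\overline{\beta}$ and of the quotient norm $\beta_{\pi}$. Keeping the orientation convention straight is a nuisance but has no bearing on the metric.
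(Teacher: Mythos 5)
Your proposal is correct and follows the same route as the paper: the canonical map is $\Lambda\mapsto(V/\Lambda)^{*}=\Lambda^{\perp}$, its differential is the operator adjoint, and the isometry is then immediate from the definition of $\overline{\beta}$ together with the compatibility $\overline{\beta_{\pi}}=\overline{\beta}_{\pi}$. You simply carry out the differential computation in more detail and track a harmless sign (obtaining $-f^{*}$ where the paper writes $f^{*}$), which is immaterial since the norms are symmetric.
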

\begin{proof}
The natural identification $A:\tilde{G}(V,n-k)\rightarrow\tilde{G}(V^{*},k)$
defined by $\Lambda\mapsto(V/\Lambda)^{*}$ (with the orientation
on $(V/\Lambda)^{*}$ induced by that of $\Lambda$) has the differential
$D_{\Lambda}(A):Hom(\Lambda,V/\Lambda)\rightarrow Hom((V/\Lambda)^{*},\Lambda^{*})$
given by $(D_{\Lambda}A)(f)=f^{*}$, which is by definition an isometry. \end{proof}
\begin{cor}
Assume $\alpha$ is a symmetric crossnorm, $K,L\subset V$ convex
symmetric bodies. Then $(\tilde{G}(V,n-k),\phi_{\alpha,K,L})$ and
$(\tilde{G}(V^{*},k),\phi_{\alpha,L^{o},K^{o}})$ are canonically
isometric. 
\end{cor}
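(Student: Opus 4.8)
The idea is to recognise $\phi_{\alpha,K,L}$ as a particular instance of the construction $\phi_{\beta}$ and then quote Proposition~\ref{symmetry}. Given the symmetric crossnorm $\alpha$ and the bodies $K,L$, let $\beta$ be the norm on $Hom(V,V)$ obtained by equipping $Hom(V_{K},V_{L})$ with $\alpha$. The first step is to check that $(\tilde{G}(V,m),\phi_{\beta})=(\tilde{G}(V,m),\phi_{\alpha,K,L})$ as Finsler manifolds for every $m$. On $T_{\Lambda}\tilde{G}(V,m)=Hom(\Lambda,V/\Lambda)$ the metric $\phi_{\beta}$ is, by definition, the quotient norm $\beta_{\pi}$ coming from the surjection $Hom(V,V)\twoheadrightarrow Hom(\Lambda,V/\Lambda)$, $f\mapsto \mathrm{pr}\circ f\circ\iota$, where $\iota:\Lambda\hookrightarrow V$ and $\mathrm{pr}:V\twoheadrightarrow V/\Lambda$. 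Since $\iota:\Lambda_{K}\to V_{K}$ and $\mathrm{pr}:V_{L}\to V_{L}/\Lambda$ are metric maps, the behaviour of a uniform crossnorm under metric injections on the source and metric surjections on the target (reviewed in \S\ref{sub:tensor norms}) shows that $\beta_{\pi}$ is exactly the crossnorm $\alpha$ on $Hom(\Lambda_{K},V_{L}/\Lambda)$, which is the definition of $\phi_{\alpha,K,L}$. (For $m=1$ this is the identification recorded in the Example above.)

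Next I would identify $\overline{\beta}$. By definition transposition $T\mapsto T^{*}$ is an isometry from $(Hom(V,V),\beta)$ to $(Hom(V^{*},V^{*}),\overline{\beta})$; for $T:V_{K}\to V_{L}$ one has $T^{*}:(V_{L})^{*}\to(V_{K})^{*}$, and since every crossnorm satisfies $\alpha(T)=\overline{\alpha}(T^{*})$ and $\alpha$ is symmetric, $\alpha(T)=\alpha(T^{*})$. Using the canonical identifications $(V_{L})^{*}=(V^{*})_{L^{o}}$ and $(V_{K})^{*}=(V^{*})_{K^{o}}$, we conclude that $\overline{\beta}$ is the norm on $Hom(V^{*},V^{*})$ obtained by equipping $Hom((V^{*})_{L^{o}},(V^{*})_{K^{o}})$ with $\alpha$; applying the first step over $V^{*}$ gives $(\tilde{G}(V^{*},k),\phi_{\overline{\beta}})=(\tilde{G}(V^{*},k),\phi_{\alpha,L^{o},K^{o}})$. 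Now Proposition~\ref{symmetry}, applied to this $\beta$ with $m=n-k$, provides a canonical isometry $(\tilde{G}(V,n-k),\phi_{\beta})\to(\tilde{G}(V^{*},k),\phi_{\overline{\beta}})$, and chaining the three identifications proves the corollary.

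The only ingredient that is not pure definition-unwinding is the compatibility of the uniform crossnorm $\alpha$ with passing to subspaces on the source and to quotients on the target, used in the first step; for the two crossnorms of primary interest, the operator and nuclear norms, this is immediate, and in general it is part of the standard theory recalled in \S\ref{sub:tensor norms}. One can also bypass $\beta$ altogether and imitate the proof of Proposition~\ref{symmetry} directly: the diffeomorphism $A:\Lambda\mapsto(V/\Lambda)^{*}$, $\tilde{G}(V,n-k)\to\tilde{G}(V^{*},k)$, has differential $f\mapsto f^{*}$, and one checks, using $\overline{\alpha}=\alpha$ together with the canonical normed identifications $(V_{L}/\Lambda)^{*}=(\Lambda^{\perp})_{L^{o}}$ and $(V^{*}/\Lambda^{\perp})_{K^{o}}=(\Lambda_{K})^{*}$, that $f\mapsto f^{*}$ is a fibrewise isometry $Hom(\Lambda_{K},V_{L}/\Lambda)\to Hom((V_{L}/\Lambda)^{*},(\Lambda_{K})^{*})$ onto the tangent space of $(\tilde{G}(V^{*},k),\phi_{\alpha,L^{o},K^{o}})$ at $\Lambda^{\perp}$, so that $A$ itself realises the desired isometry.
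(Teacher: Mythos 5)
Your main argument has a genuine gap, although the alternative you sketch at the end is the correct one.

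In the first step you claim that the quotient norm $\beta_{\pi}$ obtained from $\beta=\alpha$ on $Hom(V_{K},V_{L})$ under the surjection $Hom(V,V)\twoheadrightarrow Hom(\Lambda,V/\Lambda)$ coincides with the crossnorm $\alpha$ on $Hom(\Lambda_{K},V_{L}/\Lambda)$. This is \emph{exactly} the definition of a \emph{projective} crossnorm (item 3 in \ref{sub:tensor norms}); it is not a property enjoyed by all uniform crossnorms, nor even by all symmetric ones. In particular the operator norm $\|\cdot\|_{Op}$ is symmetric and injective but not projective, and for it the asserted identity $\beta_{\pi}=\alpha$ fails: extending an operator from a subspace while preserving the operator norm is not generally possible, so the quotient norm on $Hom(\Lambda_{K},V_{L}/\Lambda)$ is strictly larger than $\alpha$ in general. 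Your remark that the compatibility ``is immediate'' for both the operator and nuclear norms is therefore wrong for the operator norm. Since the corollary only assumes $\alpha$ symmetric, the reduction $\phi_{\alpha,K,L}=\phi_{\beta}$ is unavailable. Note that this is precisely why the later corollaries in the paper that do route through $\phi_{\beta}$ (equal Holmes--Thompson volume, equal girth) carry the additional hypothesis that $\alpha$ be projective.

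The second, ``bypass'' argument is correct and is what the paper intends: Proposition~\ref{symmetry}'s map $A:\Lambda\mapsto(V/\Lambda)^{*}=\Lambda^{\perp}$ has differential $f\mapsto f^{*}$, and one checks directly that $f\mapsto f^{*}$ is an isometry $Hom(\Lambda_{K},V_{L}/\Lambda)\to Hom\bigl((\Lambda^{\perp})_{L^{o}},(V^{*}/\Lambda^{\perp})_{K^{o}}\bigr)$ using the symmetry $\overline{\alpha}=\alpha$ and the canonical normed identifications $(V_{L}/\Lambda)^{*}=(\Lambda^{\perp})_{L^{o}}$, $(\Lambda_{K})^{*}=(V^{*}/\Lambda^{\perp})_{K^{o}}$. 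This argument uses only that $\alpha$ is a symmetric uniform crossnorm and needs no detour through $\phi_{\beta}$. Promote it from an aside to the actual proof.
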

Once we have made those observations, we will be concerned from now
on with various correspondences between $\tilde{G}(V,k)$ and $\tilde{G}(V,n-k)$.
We will assume without loss of generality that $2k\leq n$.
\begin{thm}
\label{thm:Holmes-Thompson}Fix any norm $\beta$ on $Hom(V,V)$.
Then

\[
v_{HT}(\tilde{G}(V,k),\phi_{\beta})=v_{HT}(\tilde{G}(V,n-k),\phi_{\beta})
\]
where $v_{HT}$ denotes the Holmes-Thompson volume.\end{thm}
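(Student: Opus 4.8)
The plan is to express the Holmes–Thompson volume as the symplectic volume of the co-ball bundle and to compare the two co-ball bundles $B^*(\tilde G(V,k),\phi_\beta)$ and $B^*(\tilde G(V,n-k),\phi_\beta)$ via an explicit correspondence. First I would recall that $v_{HT}(\tilde G(V,k),\phi_\beta)=\frac{1}{N!}\int_{B^*M}\omega^N$ with $N=\dim\tilde G(V,k)=k(n-k)=\dim\tilde G(V,n-k)$, so the two quantities live on symplectic manifolds of the same dimension; this already makes the comparison plausible. The cotangent space $T_\Lambda^*\tilde G(V,k)=Hom(V/\Lambda,\Lambda)\subset Hom(V,V)$ carries the norm $(\beta^*)_i$, and similarly $T_{\Lambda'}^*\tilde G(V,n-k)=Hom(V/\Lambda',\Lambda')$ with the induced $(\beta^*)_i$-norm; the natural thing is to set up a map that sends a cotangent vector $T\in Hom(V/\Lambda,\Lambda)\subset Hom(V,V)$, viewed simply as an operator on $V$, to its transpose or to a related operator on $V^*$, then use Proposition \ref{symmetry} to land back in $\tilde G(V,n-k)$.

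The key steps, in order, would be: (1) identify $T^*(\tilde G(V,k))$ with an open conic subset of $Z_k=\{(\Lambda,T):T\in Hom(V,V),\ \mathrm{im}\,T\subset\Lambda\subset\ker T\}$, and likewise for $n-k$; (2) exhibit an explicit diffeomorphism $\Phi$ between the cotangent bundles (minus zero sections) that intertwines the canonical $1$-forms up to sign — the candidate is $(\Lambda,T)\mapsto(\text{something built from }\ker T\text{ or }\mathrm{im}\,T,\ T)$, the same operator $T$ being reinterpreted, exactly as in the $k=1$ case of Theorem \ref{thm:The-co-sphere-bundles}; (3) check that $\Phi$ is an isometry for the dual norms, hence maps $B^*$ to $B^*$; (4) since $\Phi^*\alpha=\pm\alpha$ implies $\Phi^*\omega=\pm\omega$ and $\Phi^*\omega^N=(-1)^N\omega^N=\omega^N$ when $N$ is even (and $N=k(n-k)$ may be odd, so one instead uses $|\Phi^*\omega^N|=|\omega^N|$, i.e. $\Phi$ preserves the Liouville volume density), conclude $\int_{B^*(\tilde G(V,k))}\omega^N=\int_{B^*(\tilde G(V,n-k))}\omega^N$ and divide by $N!$.

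The main obstacle — and the reason the theorem is separated from the clean $k=1$ case — is exactly the point flagged in the introduction: for general $k$ there is no global symplectomorphism of the cotangent bundles respecting the canonical form, only one on a dense open subset. So I expect step (2) to fail as stated, and the real work is to show that this is harmless for the \emph{volume}: the complement of the good open set has measure zero, so one proves $\Phi$ is defined on a full-measure open subset $U_k\subset B^*(\tilde G(V,k))$ with image a full-measure open subset of $B^*(\tilde G(V,n-k))$, that $\Phi$ preserves the Liouville density there (via $\Phi^*\alpha=\pm\alpha$ on $U_k$), and then integrate. The delicate part is describing precisely which pairs $(\Lambda,T)$ form the bad locus — presumably those where $\ker T$ or $\mathrm{im}\,T$ does not have the generic dimension, i.e. where $T$ has rank other than the maximal $k$ — and verifying it is closed of positive codimension, and that the correspondence on the good locus is genuinely a diffeomorphism onto its image. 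This is where the "natural decomposition of the cotangent bundles" and the "implicit existence lemma from linear algebra" mentioned in the introduction will be invoked, but since the statement here is only about volume, I would keep the argument at the level of full-measure open sets and the change-of-variables formula, deferring the finer geodesic/spectrum analysis to the later theorem.
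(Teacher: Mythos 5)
Your skeleton is right in several respects: the correct candidate map $\Phi$ (reinterpret a rank-$k$ cotangent covector $T$ at $\Lambda=\operatorname{Im}(T)$ as a cotangent covector at $\operatorname{Ker}(T)$), the correct observation that this is only defined on the full-rank locus $E_k^k$, the correct observation that this locus is open, dense, hence of full Liouville measure so that the integral is unaffected, and the reduction of the volume claim to showing that $\Phi$ preserves the Liouville volume. The sign worry is spurious, however: the paper's $\Phi$ turns out to be a genuine symplectomorphism ($\Phi^*\omega=\omega$, not merely $\pm\omega$), so there is no parity-of-$k(n-k)$ issue to patch.

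The genuine gap is in your step (2). You write that one should ``check that $\Phi$ intertwines the canonical $1$-forms,'' but you give no mechanism for doing so, and a direct verification in coordinates would be very painful. The paper's actual idea — and the reason the proof works — is to pass through the Hamiltonian action of $GL(V)$ on $T^*\tilde G(V,l)$. The cotangent lift of the $GL(V)$-action has an equivariant momentum map $\mu_l(\Lambda,T)=T\in\mathfrak{gl}(V)^*$, whose image is the nilpotent coadjoint orbit $A_r=\{T:T^2=0,\ \operatorname{rank}T=r\}$ with Kirillov's form, and the orbits of $GL(V)$ on $T^*\tilde G(V,l)$ are exactly the rank strata $E_r^l$. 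On the open dense stratum $E_k^l$ the momentum map $\mu_l:E_k^l\to A_k$ is a local symplectomorphism (Corollary~\ref{cor:moment map is symplectomorphism}), and $\Phi$ is precisely the map that makes $\mu_{n-k}\circ\Phi=\mu_k$; commutativity of that triangle gives $\Phi^*\omega=\omega$ for free. Since $\Phi$ also preserves the dual norm $\phi_\beta^*$ (it literally keeps the same operator $T$), it maps co-balls to co-balls, and the volume equality follows. Without the moment-map / coadjoint-orbit reduction, your ``check'' has no content, and that reduction is what the theorem's proof is really about — it is also what sets up the later rank and geodesic statements. (You would also want to record the final approximation step for non-smooth $\beta$, using continuity of $v_{HT}$ in $\beta$, but that is routine.)
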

\begin{proof}
First, assume that $\beta$ is smooth and strictly convex. 

For any $1\leq l<n$, let $s=\min(l,n-l)$ and consider the filtration
$T^{*}(\tilde{G}(V,l))=C_{s}^{l}\supset C_{s-1}^{l}\supset...\supset C_{1}^{l}\supset C_{0}^{l}$
where $C_{r}^{l}=\{(\Lambda,T)\in T^{*}\tilde{G}(V,l):rank(T:V\rightarrow V)\leq r\}$
are closed submanifolds. Also, define $E_{r}^{l}=C_{r}^{l}\backslash C_{r-1}^{l}$
- the points of rank $r$ in $T^{*}\tilde{G}(V,l)$, which is an open
submanifold in $C_{r}^{l}$ . It is easy to see that $C_{r}^{l}$
is connected for all $r$, and $E_{s}^{l}$ is open and dense.\\

We identify $T\in T_{\Lambda}^{*}\tilde{G}(V,l)=Hom(V/\Lambda,\Lambda)$
with $T:V\rightarrow V$ such that $Im(T)\subset\Lambda\subset Ker(T)$.
The group $GL(V)$ acts on $\tilde{G}(V,l)$. Therefore (see \ref{sub:A-Canonic-Structure on cotangent bund}),
we get an induced Hamiltonian action of $GL(V)$ on $T^{*}\tilde{G}(V,l)$,
which is given explicitly by $U(\Lambda,T)=(U\Lambda,UTU^{-1})$.
\\

We claim that the $GL(V)$-orbits are precisely $E_{r}^{l}$, $0\leq r\leq l$.
Indeed, all operators $T:V\rightarrow V$ satisfying $T^{2}=0$ and
$rank(T)=r$ are conjugate to each other, so it remains to show that
$(\Lambda,T)$ and $(\Lambda',T)$ lie in the same orbit. Choosing
some complement $Ker(T)\oplus W=V$, define $U:V\to V$ so that $U|_{Im(T)}=Id$,
$U(\Lambda)=\Lambda'$, $U(Ker(T))=Ker(T)$ and $U|_{W}=Id$. Then
obviously $U\in GL(V)$, and $UTU^{-1}=T$, proving the claim. The
corresponding orbits of the coadjoint action of $GL(V)$ on $\mathfrak{gl}(V)^{*}\backsimeq\mathfrak{gl}(V)$
are simply $A_{r}=\{T:T^{2}=0,rank(T)=r\}$, which are equipped with
Kirillov's symplectic form.\\

The momentum map $\mu:T^{*}\tilde{G}(V,l)\rightarrow\mathfrak{gl}(V)^{*}$
is given for $X\in\mathfrak{gl}(V)$ by
\[
\langle\mu(\Lambda,T),X\rangle=tr(T\underline{X}_{\Lambda})
\]
where $\underline{X}_{\Lambda}$ denotes the infinitesimal action
(fundamental vector field) of $X$ at $\Lambda$. Thus after identifying
$\mathfrak{gl}(V)^{*}$ with $\mathfrak{gl}(V)$ by trace duality,
$\mu(\Lambda,T)=T$. \\

It follows from \ref{cor:moment map is symplectomorphism} that $\mu_{l}:E_{s}^{l}\to A_{s}$
is locally a symplectomorphism, which is clearly surjective and 2-to-1
(for instance, if $l\leq n/2$ then $\mu_{l}(Im(T),T)=T$, and there
are two possible orientations for $Im(T)$). 

We would like to find a diffeomorphism $\Phi$ that makes the following
diagram commutative:

\[\xymatrix{ E_k^k\ar[dr]_{\mu_k}\ar[rr]^\Phi & &E_k^{n-k}\ar[dl]^{\mu_{n-k}}\\& A_k& }\]

It would immediately follow that $\Phi$ is a symplectomorphism. Define
\[
\Phi(Im(T),T)=(Ker(T),T)
\]
taking the orientation on $Ker(T)$ so that $T:V/Ker(T)\to Im(T)$
is orientation preserving. It is straightforward to verify that $\Phi$
satisfies all conditions, so $\Phi:E_{k}^{k}\to E_{k}^{n-k}$ is an
isomorphism of symplectic manifolds. Moreover, it obviously preserves
the norm $\phi_{\beta}^{*}$. This proves equality of volumes, since
for any Finsler metric $\phi$, 
\[
v_{HT}(\tilde{G}(V,l),\phi)=\frac{1}{(l(n-l))!}\int_{B^{*}(\tilde{G}(V,l),\phi)}\omega^{l(n-l)}=
\]
\[
=\frac{1}{(l(n-l))!}\int_{B^{*}(\tilde{G}(V,l),\phi)\cap E_{s}^{l}}\omega^{l(n-l)}
\]
Finally, the result for arbitrary norms $\beta$ follows by approximation,
and by continuity of the Holmes-Thompson volume w.r.t. $\beta$.\end{proof}
\begin{rem}
It follows by Remark \ref{omega_implies_alpha} that $\Phi^{*}\alpha_{2}=\alpha_{1}$.
\end{rem}

\begin{rem}
In the case $k=1$, it follows from the proof that the cotangent bundles
are symplectomorphic outside the zero section, and the associated
Hamiltonians are respected. It follows by Corollary \ref{symplectomorphism_implies_spectrum}
that the length spectra, as well as the symmetric length spectra,
coincide. Together with Proposition \ref{symmetry}, this generalizes
Theorems \ref{thm:The-co-sphere-bundles} and \ref{thm:(Dual-spheres-have}
to arbitrary norms $\beta$ on $Hom(V,V)$. 
\end{rem}

\begin{rem}
It is worth noting that $\Phi$ cannot be extended continuously outside
$E_{k}^{k}$ when $k<n/2$: for any $(\Lambda_{0},T_{0})\in T_{\Lambda_{0}}^{*}\tilde{G}(V,k)$
with $rank(T)<k$, one can always find two nearby points $(\Lambda_{0},T_{1}),(\Lambda_{0},T_{2})$
s.t. $rank(T_{1})=rank(T_{2})=k$ while $Ker(T_{1})$ and $Ker(T_{2})$
are far apart on $\tilde{G}(V,n-k)$. When $k=n/2$, $\Phi$ is just
the identity map.
\end{rem}
Before we proceed to study the girth of Grassmannians, let us briefly
recall some terminology. For a Finsler manifold $(M,\phi)$ and a
curve $\gamma_{t}\in M$, we call the curve $\Gamma_{t}=(\gamma_{t},\mathcal{L}(\dot{\gamma}_{t}))\in T^{*}M$
its \emph{lift} to $T^{*}M$. A curve $\Gamma_{t}$ of such form we
call a \emph{lift curve. }
\begin{lem}
For a smooth, strictly convex norm $\beta$ on $Hom(V,V)$, the geodesics
in $\tilde{(G}(V,k),\phi_{\beta})$ lift to curves of constant rank
in $T^{*}\tilde{G}(V,k)$.\end{lem}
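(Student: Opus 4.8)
The plan is to use the Hamiltonian action of $GL(V)$ on $T^{*}\tilde{G}(V,k)$ already exhibited in the proof of Theorem~\ref{thm:Holmes-Thompson}, together with the fact that geodesics lift to characteristic curves of the Hamiltonian $H=\frac{1}{2}(\beta^{*}_{i})(T)^{2}$, i.e. to flow curves of the Hamiltonian vector field $X_{H}$ on $T_{0}^{*}\tilde{G}(V,k)$. Recall that the momentum map $\mu(\Lambda,T)=T\in\mathfrak{gl}(V)$ (under trace duality) is constant along orbits of the flow generated by any $GL(V)$-invariant Hamiltonian only in the obvious infinitesimal sense; the key structural point is that the $GL(V)$-orbits in $T^{*}\tilde{G}(V,k)$ are exactly the strata $E_{r}^{k}$ of fixed rank $r$, as was shown there. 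So it suffices to prove that the Hamiltonian flow of $H$ preserves each orbit $E_{r}^{k}$ — equivalently, that the rank of $T$ along a lifted geodesic is constant.

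First I would observe that $H$ is $GL(V)$-invariant: it is built from the norm $\beta^{*}_{i}$ on $Hom(V/\Lambda,\Lambda)$, but the entire Finsler structure $\phi_{\beta}$, hence also its dual, is defined intrinsically from $\beta$ on $Hom(V,V)$ via the quotient/subspace construction, and $GL(V)$ acts by $U(\Lambda,T)=(U\Lambda,UTU^{-1})$, which only relabels the subspace data without changing the $Hom(V,V)$-norm of $T$ — wait, this is false in general, since conjugation need not preserve $\beta$. The correct statement is weaker but sufficient: I would instead argue directly that the level sets of the momentum map $\mu$ are preserved by $X_{H}$. Concretely, for any $X\in\mathfrak{gl}(V)$ the function $\langle\mu,X\rangle(\Lambda,T)=tr(T\underline{X}_{\Lambda})$ Poisson-commutes with $H$ precisely when $X$ generates a symmetry of $H$; but the point I actually need is that the coadjoint orbit through $\mu(\Lambda,T)=T$ is $A_{r}=\{S:S^{2}=0,\ \mathrm{rank}(S)=r\}$, and since $\mu$ is a momentum map for the Hamiltonian $GL(V)$-action, the Hamiltonian flow of \emph{any} function on $T^{*}\tilde{G}(V,k)$ that is expressible as a function of $\mu$ alone is tangent to $\mu^{-1}(A_{r})$. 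The Hamiltonian $H$ is \emph{not} such a function, but the rank $r$ is a Casimir-type invariant: $\mathrm{rank}(\mu(\Lambda,T))$ is locally constant on each connected stratum, and the flow of $X_{H}$ maps $T^{*}\tilde{G}(V,k)$ to itself preserving $\mu$ up to the coadjoint action — no. Let me take the clean route.

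The clean route: by Proposition~\ref{cor:moment map is symplectomorphism}-type reasoning (the statement that $\mu_{k}:E_{s}^{k}\to A_{s}$ is locally a symplectomorphism, extended stratum-by-stratum so that $\mu:E_{r}^{k}\to A_{r}$ is a local symplectomorphism for each $r$), the Hamiltonian $H$ restricted to the open dense stratum $E_{s}^{k}$ pushes down, at least locally, to a Hamiltonian $h$ on $A_{s}$; its flow stays in $A_{s}$, hence the lifted geodesic, as long as it stays in $E_{s}^{k}$, has constant rank $s$. To handle geodesics that meet a lower stratum, I would argue by a closedness/openness dichotomy: the rank function $r(t)=\mathrm{rank}(T(t))$ along a lift curve $\Gamma_{t}$ is lower semicontinuous, so $\{t:r(t)=s\}$ is open; if it is also shown to be closed (equivalently, that a lift curve entirely contained in a single stratum $E_{r}^{k}$ stays there, which follows from the same local-symplectomorphism argument applied to the stratum $E_{r}^{k}$ with its momentum map to $A_{r}$, noting $\mu$ restricted to $C_{r}^{k}$ still has image in $\bigcup_{j\le r}A_{j}$ and $E_{r}^{k}$ is a $GL(V)$-orbit hence $\mu|_{E_{r}^{k}}$ a local symplectomorphism onto $A_{r}$), then by connectedness $r(t)\equiv s$ on any maximal geodesic, and more generally $r(t)$ is constant on each geodesic by restricting this argument to whichever stratum $\Gamma_{0}$ lies in. The main obstacle is making precise that the momentum map is a \emph{local symplectomorphism onto the coadjoint orbit on each stratum} $E_{r}^{k}$, not merely the top one — this requires checking that each $E_{r}^{k}$ is a single $GL(V)$-orbit (done in the excerpt for arbitrary $r$, via the explicit construction of $U$) and that the isotropy is discrete, so that $\mu|_{E_{r}^{k}}$ has the same fiber dimension as a coadjoint-orbit covering; once that is in hand, invariance of $A_{r}$ under the reduced Hamiltonian flow is automatic and the rank is forced to be constant.
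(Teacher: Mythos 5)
Your overall framework (the $GL(V)$-orbit strata $E_{r}^{k}$, the momentum map $\mu(\Lambda,T)=T$, the coadjoint orbits $A_{r}$) matches the paper's, but you make a decisive error that derails the argument: you assert that $H$ ``is \emph{not}'' a function of $\mu$ alone. In fact it \emph{is}: the dual norm on $T_{\Lambda}^{*}\tilde{G}(V,k)=Hom(V/\Lambda,\Lambda)\subset Hom(V,V)$ is the subspace norm $(\beta^{*})_{i}$, i.e.\ the \emph{restriction} of $\beta^{*}$, so $H(\Lambda,T)=\frac{1}{2}\beta^{*}(T)^{2}$ depends only on $T=\mu(\Lambda,T)$, that is $H=h\circ\mu$ with $h(S)=\frac{1}{2}\beta^{*}(S)^{2}$ on $\mathfrak{gl}(V)^{*}\simeq\mathfrak{gl}(V)$. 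This is precisely the observation the paper uses: since $H$ is constant on the fibers $Z_{T}=\mu^{-1}(T)$, $dH$ vanishes on $TZ_{T}$, so $X_{H}$ lies in $(TZ_{T})^{\perp}$, which by the standard momentum-map identity $(T_{p}\mu^{-1}(\xi))^{\perp}=T_{p}(Gp)$ equals the tangent space to the orbit $E_{r}^{k}$. (Equivalently this is part (1) of Corollary \ref{cor:Hamiltonian flow moment map}, whose hypothesis you correctly identify as sufficient and then wrongly declare unmet.) Closedness of $C_{r}^{k}$ and $C_{r-1}^{k}$ then finishes.

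Having discarded the crucial fact, your ``clean route'' via stratum-by-stratum descent through the local symplectomorphism $\mu:E_{r}^{k}\to A_{r}$ is incomplete: descending $H|_{E_{r}^{k}}$ to $A_{r}$ only controls the reduced flow \emph{within} $E_{r}^{k}$ and does not by itself show that the ambient Hamiltonian vector field $X_{H}$ is tangent to the submanifold $E_{r}^{k}$ --- and that tangency is exactly the nontrivial content, which comes from the fibers-of-$\mu$ observation you set aside. Your open/closed bookkeeping at the end is also imprecise (closedness of $\{t:r(t)=s\}$ is not ``equivalent to'' a lift curve contained in $E_{r}^{k}$ staying in $E_{r}^{k}$), though the paper does invoke closedness of the $C_{j}^{k}$ in its final step. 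The fix is simply to reinstate the observation that $H$ is constant along the $\mu$-fibers.
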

\begin{proof}
We use the notation of the proof of Theorem \ref{thm:Holmes-Thompson}. 

\begin{comment}
We have the natural maps $P_{l,j}:E_{j}^{l}\rightarrow E_{j}^{j}\subset T^{*}(G(V,j))$
given by $P_{l,j}(\Lambda,T)=(Im(T),T)$. Those are fibrations, with
$P_{l,j}^{-1}(Im(T),T)=\tilde{G}(Ker(T)/Im(T),l-j)\times T$ (note
that $\dim(Ker(T)/Im(T))=n-2j$). In the following, $l=k$ or $l=n-k$.
We then have 
\[
\dim E_{j}^{l}=\dim E_{j}^{j}+\dim P_{j}^{-1}(Im(T),T)=
\]
\[
=2j(n-j)+(n-k-j)(k-j)=j(n-j)+k(n-k)
\]
thus $codim(C_{j-1}^{l}\subset C_{j}^{l})=\dim(E_{j}^{l})-\dim(E_{j-1}^{l})=n-2j+1\geq2$.
In particular, $E_{j}^{l}$ are connected for all $j$. 
\end{comment}
\begin{comment}
Denote by $\alpha_{j}$ the canonic 1-form on $T^{*}(G(V,j))$. The
proof that $P_{l,j}^{*}\alpha_{j}=\alpha|_{E_{j}^{l}}$ is similar
to the proof that $\Phi^{*}\alpha=\alpha$.
\end{comment}

The level sets of $\mu:T^{*}\tilde{G}(V,l)\to\mathfrak{gl}(V)$ are
$Z_{T}=\{(\Lambda,T):Im(T)\subset\Lambda\subset Ker(T)\}\subset T^{*}\tilde{G}(V,l).$
It follows (see \ref{sub:The-momentum-mapping}) that the skew-orthogonal
space $(T_{(\Lambda,T)}Z_{T})^{\perp}=T_{(\Lambda,T)}E_{r}^{l}$ where
$r=rank(T)$. 

The Hamiltonian $H=\frac{1}{2}\phi_{\beta}^{*2}$ is constant on $Z_{T}$,
and so $X_{H}\Big|_{E_{r}^{k}}\in TE_{r}^{k}$. Since $C_{r}^{k}$
and $C_{r-1}^{k}$ are closed, the flow defined by $X_{H}$ leaves
$E_{j}^{k}$ invariant, which concludes the proof.
\end{proof}
This motivates the following definition: 
\begin{defn}
Fix a smooth, strictly convex norm $\beta$ on $Hom(V,V)$. The \emph{rank}
of a geodesic in $(\tilde{G}(V,k),\phi_{\beta})$ is the constant
rank of its lift to $T^{*}\tilde{G}(V,k)$ \emph{.}\end{defn}
\begin{example}
The girth of $(\tilde{G}(V,k),\phi_{HS})$ is attained on a geodesic
of rank 1 (which can be visualized as a rotation of a two dimensional
plane, while fixing all orthogonal directions).
\end{example}
We will make use of the following general fact
\begin{lem}
\label{prop:Legendre projection}For a normed $X$, and a subspace
$Y\subset X$, $\mathcal{L}_{Y}(y)=Pr_{Y^{*}}(\mathcal{L}_{X}(y)$)
for all $y\in Y$. In particular, taking $X=(Hom(V,V),\beta)$, $Y=Hom(V/\Lambda,\Lambda)$
one has $\mathcal{L}_{Hom(V/\Lambda,\Lambda)}(T)=Pr_{Hom(\Lambda,V/\Lambda)}(\mathcal{L}_{Hom(V,V)}(T))$. \end{lem}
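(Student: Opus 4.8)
The plan is to read the statement off from the description of the Legendre transform as the fibre derivative of the energy. For a smooth norm $\|\cdot\|$ on a finite-dimensional space $Z$ and $z\neq 0$, the covector $\mathcal{L}_Z(z)\in Z^*$ is the differential at $z$ of the function $z\mapsto\frac{1}{2}\|z\|^{2}$ (equivalently, and matching the tangent-hyperplane description in section \ref{sub:Definitions-and-basic}, it is the unique $\xi\in Z^*$ with $\xi(z)=\|z\|^{2}$ and $\|\xi\|_{Z^*}=\|z\|$, the uniqueness being exactly smoothness of $\|\cdot\|$ at $z$). Granting this, the proof is essentially the chain rule: for $y\in Y$, writing $\iota\colon Y\hookrightarrow X$ for the inclusion and using $d\iota_y=\iota$, one has $\mathcal{L}_Y(y)=d\big(\frac{1}{2}\|\cdot\|_Y^{2}\big)_y=d\big(\frac{1}{2}\|\cdot\|_X^{2}\circ\iota\big)_y=d\big(\frac{1}{2}\|\cdot\|_X^{2}\big)_y\circ\iota=\mathcal{L}_X(y)\big|_Y$; and then one observes that the restriction map $X^*\to Y^*$, which is dual to $\iota$, is by definition the identification $Pr_{Y^{*}}$ of $Y^*$ with $X^*/Y^{\perp}$. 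This yields $\mathcal{L}_Y(y)=Pr_{Y^{*}}(\mathcal{L}_X(y))$.

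The only point that is not purely formal, and which I expect to be the main thing to nail down, is that $\mathcal{L}_Y$ is defined at all, i.e.\ that the subspace norm on $Y$ is smooth away from the origin. This is immediate from the same composition: $\frac{1}{2}\|\cdot\|_Y^{2}=\frac{1}{2}\|\cdot\|_X^{2}\circ\iota$ is a linear map followed by a function smooth off $0$, hence smooth on $Y\setminus 0$, and since $\|y\|_Y=\|y\|_X>0$ this forces $\|\cdot\|_Y$ itself to be smooth at $y$. (Geometrically this is the statement that a central hyperplane section of a smooth convex body is again smooth, since by Hahn--Banach every supporting functional of the section extends to a supporting functional of the body.) If one prefers to avoid differentials, this smoothness is exactly what makes $\mathcal{L}_Y(y)$ unique, and one only checks that $\xi:=\mathcal{L}_X(y)$ restricted to $Y$ does the job: $\xi|_Y(y)=\xi(y)=\|y\|_X^{2}=\|y\|_Y^{2}$, while $\|y\|_Y=\xi|_Y(y)/\|y\|_Y\le\|\xi|_Y\|_{Y^*}\le\|\xi\|_{X^*}=\|y\|_X=\|y\|_Y$ forces $\|\xi|_Y\|_{Y^*}=\|y\|_Y$, so that $\xi|_Y=\mathcal{L}_Y(y)$.

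For the displayed particular case I would specialize to $X=(Hom(V,V),\beta)$ (which is smooth and strictly convex in the situations where this lemma is applied, so the section inherits smoothness) and $Y=Hom(V/\Lambda,\Lambda)$, realized inside $Hom(V,V)$ as the operators $T$ with $Im(T)\subset\Lambda\subset Ker(T)$. Identifying $Hom(V,V)^*$ with $Hom(V,V)$ by trace duality as fixed earlier, the restriction map $X^*\to Y^*$ becomes precisely the canonical quotient projection $Hom(V,V)\twoheadrightarrow Hom(\Lambda,V/\Lambda)$, i.e.\ $Pr_{Hom(\Lambda,V/\Lambda)}$; this last step is just the bookkeeping with the norms $\beta_i$, $\beta_\pi$ and their duals set up above, and uses no new idea.
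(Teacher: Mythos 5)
Your proof is correct and is exactly the ``immediate verification of the definitions'' that the paper's one-line proof refers to: you identify $\mathcal{L}$ with the fibre derivative of $\tfrac12\|\cdot\|^2$ (equivalently the unique supporting functional) and apply the chain rule for the inclusion $\iota\colon Y\hookrightarrow X$, which is precisely the content the author left implicit. The Hahn--Banach verification that $\mathcal{L}_X(y)|_Y$ satisfies the two defining properties of $\mathcal{L}_Y(y)$ is a clean, equivalent way to close the argument, and your remark on why the subspace norm inherits smoothness is a worthwhile point that the paper does not address at all.
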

\begin{proof}
This follows by an immediate verification of the definitions.
\end{proof}
We will also need a lemma from linear algebra:
\begin{lem}
\label{lem:Linear Algebra}Let $V$ be an $n$-dimensional real vector
space, and $T\in GL(V)$. Suppose $\Lambda\subset V$ is a subspace
with $\dim\Lambda=k$ and $T(\Lambda)=\Lambda$. Then there is a subspace
$\Omega\subset V$ s.t. $\dim\Omega=n-k$, $T(\Omega)=\Omega$ and
$\det T|_{\Lambda}\det T|_{\Omega}=\det T$.\end{lem}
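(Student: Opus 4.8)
The plan is to produce the complementary invariant subspace $\Omega$ by separating eigenvalues. Since $T \in GL(V)$, working over $\C$ we may decompose $V \otimes \C = \bigoplus_{\lambda} G_\lambda$ into generalized eigenspaces, where $\lambda$ ranges over the distinct eigenvalues of $T$; each $G_\lambda$ is $T$-invariant. The key observation is that an invariant subspace $\Lambda$ need not itself be a sum of generalized eigenspaces, but one can still find an invariant complement compatible with it. The cleanest route is via the minimal polynomial: write the characteristic polynomial as $\chi_T = \prod_i p_i(x)^{m_i}$ with the $p_i$ distinct irreducible over $\R$, so that $V = \bigoplus_i V_i$ with $V_i = \ker p_i(T)^{m_i}$, and each $V_i$ is $T$-invariant. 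Then $\Lambda = \bigoplus_i (\Lambda \cap V_i)$ because the projections onto the $V_i$ are polynomials in $T$ and hence preserve $\Lambda$. This reduces the problem to the primary components: it suffices to find, inside each $V_i$, a $T$-invariant complement $\Omega_i$ of $\Lambda_i := \Lambda \cap V_i$, and then set $\Omega = \bigoplus_i \Omega_i$. The determinant identity is then multiplicative over the decomposition, so it is enough to check it on each $V_i$ — but in fact, once $\Omega$ is a genuine complement to $\Lambda$ (i.e. $V = \Lambda \oplus \Omega$) and both are $T$-invariant, the identity $\det T|_\Lambda \cdot \det T|_\Omega = \det T$ is automatic, since in a basis adapted to the direct sum $T$ is block-diagonal.

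So the real content is: given a $T$-invariant subspace $\Lambda$ of a space $V$ on which $T$ acts with a single elementary divisor structure, find a $T$-invariant complement. This is where I would invoke the structure theory of $\R[x]$-modules (equivalently, rational canonical form): $V$, viewed as an $\R[x]$-module via $x \cdot v = Tv$, is a module over the PID $\R[x]$, and $\Lambda$ is a submodule. In general a submodule of a module over a PID need not be a direct summand — but here $T \in GL(V)$ guarantees that $x$ acts invertibly, so $V$ is actually a module over the localization $\R[x]_{(x)}$... that is not quite enough either. The correct tool is: decompose $V = \bigoplus_j \R[x]/(q_j)$ as a sum of cyclic modules (invariant factor or elementary divisor form); the obstruction to $\Lambda$ being a summand is genuine in general. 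Here is the fix that makes the lemma true as stated: I do \emph{not} need $\Omega$ to be canonical, only to exist. Pass to $\C$, use the generalized eigenspace decomposition $V_\C = \bigoplus_\lambda G_\lambda$, note $\Lambda_\C = \bigoplus_\lambda (\Lambda_\C \cap G_\lambda)$ as above, and within a single generalized eigenspace $G_\lambda$ — where $T = \lambda I + N$ with $N$ nilpotent — the invariant subspaces of $G_\lambda$ are exactly the $N$-invariant subspaces, and one checks directly (by induction on $\dim G_\lambda$, or by choosing a Jordan basis and a greedy completion) that every $N$-invariant subspace has an $N$-invariant complement. Finally, descend from $\C$ back to $\R$: the complement $\Omega_\C$ can be chosen stable under complex conjugation (group the conjugate eigenvalue pairs and make conjugation-symmetric choices), so $\Omega = \Omega_\C \cap V$ is a real $T$-invariant complement of $\Lambda$ with $\dim_\R \Omega = n-k$.

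The main obstacle is precisely the false-in-general claim that submodules over $\R[x]$ split off; I expect the cleanest honest argument is the nilpotent case combined with complexification, and I would spend most of the writeup on the inductive construction of an $N$-invariant complement inside a nilpotent block (the step "pick a vector of maximal height not in $\Lambda$, adjoin its cyclic span, repeat"). Everything after that — multiplicativity of the determinant over the block decomposition, and the conjugation-symmetry bookkeeping needed to return to $\R$ — is routine.
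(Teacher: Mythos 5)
Your plan hinges on finding a $T$-invariant \emph{complement} to $\Lambda$, and the crucial claim you make to get there --- ``every $N$-invariant subspace has an $N$-invariant complement'' inside a nilpotent block --- is false. Take $N$ a single Jordan block of size $2$ and $\Lambda = \ker N$. The $N$-invariant subspaces of this $2$-dimensional space are $0$, $\ker N$, and the whole space; there is no $N$-invariant line complementary to $\Lambda$. Your ``greedy'' construction (pick a vector of maximal height not in $\Lambda$, adjoin its cyclic span, repeat) does not repair this: the cyclic span of any height-$2$ vector is already the whole $2$-dimensional block, which strictly contains $\Lambda$ rather than complementing it. You flagged early on that submodules over $\R[x]$ need not split off, but the nilpotent case is precisely where this failure lives, not where it is resolved.

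The deeper issue is that the lemma does not ask for a complement, and it cannot: in the example above, the only $1$-dimensional $T$-invariant subspace is $\Lambda$ itself, yet the lemma is true with $\Omega = \Lambda$ (both restricted determinants are $1$). The paper's proof exploits exactly this freedom. One observes that $\Lambda^\perp = (V/\Lambda)^* \subset V^*$ is $T^*$-invariant of dimension $n-k$, and $\det T^*|_{\Lambda^\perp} = \det T|_{V/\Lambda}$, so $\det T|_\Lambda \det T^*|_{\Lambda^\perp} = \det T$. Since $T$ and $T^*$ are conjugate over $\R$ (they share a rational canonical form), write $T^* = UTU^{-1}$ and set $\Omega = U^{-1}\Lambda^\perp$; then $\Omega$ is $T$-invariant with the right dimension and the right determinant, with no complementarity claim made or needed. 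In the Jordan-block example this $\Omega$ coincides with $\Lambda$, which is the whole point: the construction is allowed to fold back onto $\Lambda$. Your elementary-divisor and complexification machinery is not just heavier than needed here --- it is aimed at proving a stronger statement that is simply not true.
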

\begin{proof}
Simply observe that for $T^{*}\in GL(V^{*})$ and $\Lambda^{\perp}=(V/\Lambda)^{*}$,
$T^{*}(\Lambda^{\perp})=\Lambda^{\perp}$. It is well known that $T^{*}$
and $T$ are conjugate over $\R$, i.e. $T^{*}=UTU^{-1}$ for some
invertible $U:V\to V^{*}$. Thus $\Omega=U^{-1}\Lambda^{\perp}$ is
invariant for $T$, and $\det T=\det T|_{\Lambda}\det T|_{V/\Lambda}=\det T|_{\Lambda}\det T^{*}|_{\Lambda^{\perp}}$.\end{proof}
\begin{thm}
Fix a smooth, strictly convex norm $\beta$ on $Hom(V,V)$. Then there
exists a bijection between the closed geodesics of $\tilde{(G}(V,k),\phi_{\beta})$
and those of $\tilde{(G}(V,n-k),\phi_{\beta})$ which respects length
and rank. Moreover, symmetric geodesics correspond to symmetric geodesics.\end{thm}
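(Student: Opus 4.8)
The strategy is to lift geodesics to the cotangent bundles, land on the common coadjoint orbit $A_r$ via the momentum map, and match characteristic curves there. Recall from the previous lemma that any closed geodesic $\gamma$ in $(\tilde G(V,k),\phi_\beta)$ lifts to a characteristic curve $\Gamma_t=(\Lambda_t,T_t)$ of constant rank $r$, living in $E_r^k$; likewise for $\tilde G(V,n-k)$. By the discussion in the proof of Theorem~\ref{thm:Holmes-Thompson}, $\mu_k:E_s^k\to A_s$ is a local symplectomorphism when $r=s=\min(k,n-k)$, and more generally $\mu$ restricted to the rank-$r$ stratum is a symplectic fibration onto $A_r$. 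The plan is: (i) push $\Gamma_t$ down by $\mu_k$ to a curve $\tau_t:=T_t$ in $A_r\subset\mathfrak{gl}(V)$; since $\mu_k$ intertwines the Hamiltonian flow of $H_M=\tfrac12\phi_\beta^{*2}$ with a Hamiltonian flow downstairs (the momentum map is Poisson), $\tau_t$ is a characteristic curve for the pushed-forward Hamiltonian $h_r$ on $A_r$; (ii) observe that $h_r$ depends only on $\beta$, not on $k$, because on the rank-$r$ stratum the cotangent norm is $(\beta^*)_i$ pulled back from $\beta^*$ on $Hom(V,V)$, which is $k$-independent — this is the content of Lemma~\ref{prop:Legendre projection}; (iii) lift $\tau_t$ back up to $E_r^{n-k}$ through $\mu_{n-k}$, and check that the lift is again closed.

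\textbf{The key point and main obstacle.} Step (iii) — closing up the lifted curve — is the crux, and this is exactly where Lemma~\ref{lem:Linear Algebra} enters. Downstairs, $\tau_t\in A_r$ is a closed loop; a local lift through $\mu_{n-k}$ exists because $\mu_{n-k}$ is a submersion, but the lift is only a path $(\Omega_t,T_t)$ a priori, and after going once around the loop we may return to a different fiber point $\Omega_1\ne\Omega_0$ over $\tau_0=\tau_1$. To control this we use the structure of the fiber: $\mu_{n-k}^{-1}(T)=\{(\Omega,T): Im(T)\subset\Omega\subset Ker(T)\}$, i.e. $\Omega$ is an $(n-k)$-subspace sitting between $Im(T)$ and $Ker(T)$, equivalently an $(n-k-r)$-subspace of the $(n-2r)$-dimensional space $Ker(T)/Im(T)$. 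The monodromy of the characteristic flow around the loop acts on this fiber; we must produce an invariant subspace realizing a closed lift. Here the holonomy of the loop is governed by the linear map $g=\int$-time-one-flow of the corresponding vector field, which preserves $Ker(T_0)$, $Im(T_0)$, and the $k$-dimensional space $\Lambda_0$ in the original picture; Lemma~\ref{lem:Linear Algebra} (applied to $g$ acting on $Ker(T_0)/Im(T_0)$ with its invariant $k-r$ plane coming from $\Lambda_0$) hands us a complementary $g$-invariant $(n-k-r)$-plane, hence a $g$-invariant $\Omega_0$ in the fiber over $\tau_0$. Starting the lift at this $\Omega_0$ produces a genuinely closed geodesic in $\tilde G(V,n-k)$ of the same rank $r$.

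\textbf{Completing the argument.} It remains to check: length is preserved because $\mu_k,\mu_{n-k}$ are local symplectomorphisms preserving the Hamiltonian, hence preserving the arc-length parametrization, and the two lifts project to the \emph{same} $\tau_t$ traversed over the same time interval, so $L(\gamma)=L(\text{period of }\tau)=L(\gamma')$; rank is preserved by construction; and the assignment $\gamma\mapsto\gamma'$ is a bijection because it is visibly symmetric in $k\leftrightarrow n-k$ (run the same construction backwards, using Lemma~\ref{lem:Linear Algebra} with the roles of $\Lambda$ and $\Omega$ exchanged, which the lemma's symmetry supplies). Finally, for symmetric geodesics: the antipodal map is orientation reversal, which on $T^*\tilde G(V,k)$ is $(\Lambda,T)\mapsto(-\Lambda,T)$ (same $T$, reversed orientation), so it acts trivially on $A_r$ and commutes with $\mu_k$; hence an $R$-invariant $\Gamma_t$ projects to a $\tau_t$ whose period is halved by the symmetry, and the lift back up can be chosen $R$-equivariant by picking the invariant $\Omega_0$ equivariantly — the only subtlety is that the fiber over the "turning point" must itself be $R$-stable, which holds since $R$ fixes $Im(T),Ker(T)$ setwise and merely flips orientations. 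I expect the orientation bookkeeping in this last step to be the fussiest part, but it is routine once the rank-stratified momentum-map picture and Lemma~\ref{lem:Linear Algebra} are in place.
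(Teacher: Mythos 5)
Your strategy matches the paper's: lift geodesics to characteristic curves, push down via the momentum map to the coadjoint orbit $A_r$, analyze the monodromy of the characteristic flow, and use Lemma~\ref{lem:Linear Algebra} to produce a closed lift in $T^*\tilde{G}(V,n-k)$. The main ingredients you identify --- the rank stratification, the holonomy acting on $Ker(T_0)/Im(T_0)$, and the invariant $(n-k-r)$-plane complementary to $\Lambda_0/Im(T_0)$ --- are exactly the paper's; there the holonomy is made explicit as the solution of $\dot{B}_t = S_t B_t$, $B_0=\mathrm{Id}$, with $S_t = \mathcal{L}_{Hom(V,V)}(T_t)$, and showing that $B_t$ carries $\Lambda_0,Im(T_0),Ker(T_0)$ to $\Lambda_t,Im(T_t),Ker(T_t)$ (using Corollary~\ref{cor:Hamiltonian flow moment map} and Lemma~\ref{prop:Legendre projection}) is the bulk of the work that you compress into an assertion.

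Your treatment of the symmetric case, however, has a genuine gap. You propose ``picking the invariant $\Omega_0$ $R$-equivariantly,'' but $R$ has no fixed points, so no single $\Omega_0$ can be $R$-fixed, and the notion of an $R$-stable fiber over a ``turning point'' does not apply to a loop. What is actually required is that the monodromy $B_L$ reverse orientation on $\Omega_0$ precisely when it reverses orientation on $\Lambda_0$. This is exactly what the determinant identity in Lemma~\ref{lem:Linear Algebra}, $\det T|_\Lambda\,\det T|_\Omega=\det T$, is for: since $\tilde{B}_L\in GL^+(K_0/I_0)$, the identity forces $\sign\det\tilde{B}_L|_{\tilde{\Omega}_0}=\sign\det\tilde{B}_L|_{\tilde{\Lambda}_0}$, and after passing back through the quotient by $I_0$ this yields $\sign\det B_L|_{\Omega_0}=\sign\det B_L|_{\Lambda_0}$, which decides between $\Omega_L=\Omega_0$ (closed) and $\Omega_L=\overline{\Omega_0}$ (symmetric). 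The lemma is not invoked merely to supply a $B_L$-invariant subspace; the sign constraint on the determinant is the point, and you characterize it as ``routine orientation bookkeeping'' without ever using it. Without it you cannot rule out, for instance, a symmetric geodesic of length $2L$ upstairs corresponding to a non-symmetric closed geodesic of length $L$ downstairs, which would break the claim that symmetric geodesics correspond to symmetric geodesics.
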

\begin{proof}
We again use the notation of the proof of Theorem \ref{thm:Holmes-Thompson}.
Suppose that for some $1\leq r\leq k$, $\gamma_{t}=(\Lambda_{t},T_{t})\subset E_{r}^{k}$
, $0\leq t\leq L$ is a characteristic curve, i.e. the lift of a geodesic
$\Lambda_{t}$ of length $L$ and rank $r$ in $\tilde{(G}(V,k),\phi_{\beta})$
with arc-length parametrization, such that $\Lambda_{L}=\Lambda_{0}$
(closed geodesic - referred to as the first case) or $\Lambda_{L}=\overline{\Lambda_{0}}$
(half of a closed symmetric geodesic - referred to as the second case).
In the second case, the extension to a full symmetric geodesic is
given by $\Lambda_{t}=\overline{\Lambda_{t-L}}$ for $L\leq t\leq2L$.
The parameter $t$ is taken mod $L$ in the first case, and mod $2L$
in the second case. In both cases, $T_{t+L}=T_{t}$. Denote $I_{t}=Im(T_{t})$,
$K_{t}=Ker(T_{t})$, and define $S_{t}=\mathcal{L}_{Hom(V,V)}(T_{t})\in Hom(V,V)$.
Then $\beta(S_{t})=1$, and it follows from Lemma \ref{prop:Legendre projection}
that $\dot{\Lambda}_{t}=\Pr{}_{Hom(\Lambda_{t},V/\Lambda_{t})}(S_{t})$.
Define $B_{t}\in Hom(V,V)$ by the differential equation $\dot{B}_{t}=S_{t}B_{t}$,
with $B_{0}=Id$. Note that $B_{t}\in GL^{+}(V)$ for all $t$, and
$B_{t+L}=B_{t}B_{L}$.\\

First, observe that $B_{t}(\Lambda_{0})=\Lambda_{t}$. This is evident
by taking $e_{1}(0),...,e_{k}(0)$ a basis of $\Lambda_{0}$, and
$e_{l}(t)=B_{t}e_{l}(0)$. Then $\dot{e}_{l}(t)=S_{t}e_{l}(t)$, as
required.\\

We next claim that $B_{t}(I_{_{0}})=I_{t}$ and $B_{t}(K_{0})=K_{t}$.
This can be seen as follows: By Corollary \ref{cor:Hamiltonian flow moment map},
$\mu_{k}(\Gamma_{t})$ is a flow curve in $A_{r}$ for the Hamiltonian
$H(T)=\frac{1}{2}\beta^{*}(T)^{2}$. Since $A_{r}$ and $E_{r}^{r}$
are locally symplectomorphic through $\mu_{r}$, the curve $(I_{t},T_{t})\in E_{r}^{r}$
is characteristic (strictly speaking one first has to fix some orientation
on $I_{0}$, but we only consider local properties of the curve such
as it being characteristic, or a lift curve). In particular, it is
a lift curve, so $T_{t}=\mathcal{L}_{Hom(I_{t},V/I_{t})}(\dot{I}_{t})\Rightarrow\dot{I}_{t}=\Pr_{Hom(I_{t},V/I_{t})}(\mathcal{L}_{Hom(V,V)}T_{t})=\Pr_{Hom(I_{t},V/I_{t})}S_{t}$.
This readily implies as above that $I_{t}=B_{t}(I_{0})$. The proof
that $K_{t}=B_{t}(K_{0})$ is identical.\\

Denote by $\tilde{B}_{t}:K_{0}/I_{0}\to K_{t}/I_{t}$ the operator
induced from $B_{t}:V\to V$, and fix some orientation on $I_{0}.$
Then $K_{0}$ inherits an orientation, and so do $K_{t}=B_{t}(K_{0}),$
$I_{t}=B_{t}(I_{0})=V/K_{t}$ (equalities of oriented spaces). In
particular, $K_{L}/I_{L}$ and $K_{0}/I_{0}$ coincide as oriented
vector spaces (in fact, this orientation is independent of the one
on $I_{0}$), and $\tilde{B}_{L}\in GL^{+}(K_{0}/I_{0})$. Now consider
$\tilde{\Lambda}_{0}=\Lambda_{0}/I_{0}\subset K_{0}/I_{0}$ which
inherits an orientation from $(\Lambda_{0},I_{0})$, and $\dim\tilde{\Lambda}_{0}=k-r$.
\\

Apply Lemma \ref{lem:Linear Algebra} to conclude the existence of
a subspace $\tilde{\Omega}_{0}\subset K_{0}/I_{0}$ s.t. $\dim\tilde{\Omega}_{0}=n-k-r$
and $\tilde{B}_{L}(\tilde{\Omega}_{0})=\tilde{\Omega}_{0}$ as unoriented
spaces. Moreover, one can write 
\[
\sign\det\tilde{B}_{L}|_{\tilde{\Omega}_{0}}=\sign\det\tilde{B}_{L}|_{\tilde{\Lambda}_{0}}=\sign\det B_{L}|_{\Lambda_{0}}\sign\det B_{L}|_{I_{0}}
\]
Fix some orientation on $\tilde{\Omega}_{0}$, and define $\Omega_{0}=Pr_{K_{0}/I_{0}}^{-1}(\tilde{\Omega}_{0})$
with the induced orientation. Note that 
\[
B_{L}(\Omega_{0})=B_{L}(Pr_{K_{0}/I_{0}}^{-1}(\tilde{\Omega}_{0}))=Pr_{K_{0}/I_{0}}^{-1}(\tilde{B}_{L}\tilde{\Omega}_{0})=\Omega_{0}
\]
(ignoring orientations), while 
\[
\sign\det B_{L}|_{\Omega_{0}}=\sign\det B_{L}|_{\tilde{\Omega}_{0}}\sign\det B_{L}|_{I_{0}}=\sign\det B_{L}|_{\Lambda_{0}}
\]
i.e. $B_{L}(\Omega_{0})=\Omega_{0}$ (first case) or $B_{L}(\Omega_{0})=\overline{\Omega_{0}}$
(second case).\\

Let $\Gamma_{t}=(\Omega_{t},T'_{t})$ be the unique characteristic
curve through ($\Omega_{0},T_{0})$. According to Lemma \ref{cor:Hamiltonian flow moment map},
$\Gamma_{t}$ is mapped by $\mu_{k}$ to $T_{t}\in A_{r}$ so $T'_{t}=T_{t}$;
and $\Gamma_{t}$ is also a lift curve, so $\dot{\Omega}_{t}=Pr_{Hom(\Omega_{t},V/\Omega_{t})}(S_{t})$
and $\Omega_{t}=B_{t}(\Omega_{0})$ as before. We conclude that $\Omega_{t}$,
$0\leq t\leq L$ is a geodesic in $\tilde{(G}(V,n-k),\phi_{\beta})$
of length $L$ and rank $r$, which is closed (in the first case)
or constitutes half of the closed symmetric geodesic $\Omega_{t}=B_{t}\Omega_{0}$,
$0\leq t\leq2L$ (second case). Finally, it remains to note that we
may choose $\Omega_{0}=\Omega_{0}(\Lambda_{t})$ in a shift invariant
manner, i.e. in such a way that $\Omega_{0}(\Lambda_{t+T})=\Omega_{T}$
for all $T$. This concludes the proof.\end{proof}
\begin{rem}
The same proof shows also the existence of a correspondence between
geodesics joining antipodal points.\end{rem}
\begin{cor}
For any norm $\beta$ on $Hom(V,V)$, \textup{$(\tilde{G}(V,k),\phi_{\beta})$}\textup{\emph{
}}and $(\tilde{G}(V,n-k),\phi_{\beta})$ have equal girth.
\end{cor}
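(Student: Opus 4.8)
The plan is to reduce the corollary to the preceding theorem by an approximation argument, exactly in the style of the end of the proof of Theorem~\ref{thm:Holmes-Thompson}. First I would treat the case of a smooth, strictly convex norm $\beta$: the theorem above provides a bijection between the closed geodesics of $(\tilde{G}(V,k),\phi_{\beta})$ and those of $(\tilde{G}(V,n-k),\phi_{\beta})$ that preserves length and carries symmetric geodesics to symmetric geodesics. Hence the symmetric length spectra of the two manifolds coincide. Since the girth is by definition the minimal element of the symmetric length spectrum (equivalently $g_{\phi}(M)=2\min\{dist(p,Rp):p\in M\}$ by Remark~\ref{remk:girth_is_simple}), this already yields $g_{\phi_{\beta}}(\tilde{G}(V,k))=g_{\phi_{\beta}}(\tilde{G}(V,n-k))$ whenever $\beta$ is smooth and strictly convex.

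For a general norm $\beta$ on $Hom(V,V)$ I would choose a sequence of smooth, strictly convex norms $\beta_{j}$ with $(1+\epsilon_{j})^{-1}\beta\leq\beta_{j}\leq(1+\epsilon_{j})\beta$ and $\epsilon_{j}\to0$. The quotient-norm operation $\beta\mapsto\beta_{\pi}$ on $Hom(\Lambda,V/\Lambda)$ is monotone, so the same two-sided bound descends to the induced Finsler structures: $(1+\epsilon_{j})^{-1}\phi_{\beta}\leq\phi_{\beta_{j}}\leq(1+\epsilon_{j})\phi_{\beta}$ on $T\tilde{G}(V,k)$, and likewise on $T\tilde{G}(V,n-k)$. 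Both $\phi_{\beta}$ and $\phi_{\beta_{j}}$ are symmetric with respect to the orientation-reversal antipodal map, so Lemma~\ref{continuity} applies on each Grassmannian and gives $g_{\phi_{\beta_{j}}}\to g_{\phi_{\beta}}$ on both $\tilde{G}(V,k)$ and $\tilde{G}(V,n-k)$ as $j\to\infty$. Since $g_{\phi_{\beta_{j}}}(\tilde{G}(V,k))=g_{\phi_{\beta_{j}}}(\tilde{G}(V,n-k))$ for every $j$ by the smooth case, passing to the limit proves the claimed equality for $\beta$.

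There is no serious obstacle here: the theorem does all the geometric work, and what remains is routine approximation bookkeeping. The two points needing a moment's care are (i) that the quotient norm depends monotonically on the ambient norm, so that the sandwiching of $\beta$ by the $\beta_{j}$ descends to a sandwiching of the Finsler metrics to which Lemma~\ref{continuity} can be applied, and (ii) that smooth, strictly convex approximants $\beta_{j}$ with prescribed two-sided bounds exist (e.g.\ smooth the support function of the unit ball of $\beta$ and take a small Minkowski combination with an ellipsoid). Both are exactly the ingredients already invoked in the continuity step at the close of the proof of Theorem~\ref{thm:Holmes-Thompson}, so the argument is essentially a repackaging of that step, combined with the observation that equal symmetric length spectra force equal girth.
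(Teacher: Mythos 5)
Your proposal is correct and follows exactly the route the paper intends: for smooth, strictly convex $\beta$ the preceding theorem yields a length-preserving bijection of closed geodesics taking symmetric ones to symmetric ones, so the symmetric length spectra (and hence their minima, the girths) coincide; the general case follows by sandwiching $\beta$ between $(1\pm\epsilon_j)$-multiples of smooth strictly convex approximants $\beta_j$, noting that the quotient-norm operation $\beta\mapsto\beta_\pi$ preserves such two-sided bounds, and invoking Lemma~\ref{continuity}. This is precisely the approximation mechanism the paper flags in the remark ``As before, when we deal with the girth or Holmes-Thompson volume\ldots one can omit smoothness and strict convexity assumptions on $\beta$,'' so there is no genuine difference between your argument and the paper's.
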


\begin{cor}
Assume $\alpha$ is a projective crossnorm, $K,L\subset V$ convex
symmetric bodies. Then $(\tilde{G}(V,k),\phi_{\alpha,K,L})$ and $\tilde{(G}(V,n-k),\phi_{\alpha,K,L})$
have equal Holmes-Thompson volume, and equal girth. If $\alpha$ on
$Hom(V_{K},V_{L})$ is smooth and strictly convex, the length spectra
and symmetric length spectra coincide together with rank.
\end{cor}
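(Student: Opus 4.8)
The plan is to deduce this corollary from the results already established for the Finsler structures $\phi_{\beta}$ attached to norms $\beta$ on $Hom(V,V)$, by showing that $\phi_{\alpha,K,L}$ is literally one of these structures. Explicitly, I would set $\beta$ to be the crossnorm $\alpha$ evaluated on $Hom(V_{K},V_{L})$, regarded as a norm on the underlying vector space $Hom(V,V)$. The main claim is then that $(\tilde{G}(V,k),\phi_{\alpha,K,L})=(\tilde{G}(V,k),\phi_{\beta})$ as Finsler manifolds, and likewise with $k$ replaced by $n-k$.

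To establish this claim, fix $\Lambda\in\tilde{G}(V,k)$. By definition $T_{\Lambda}\tilde{G}(V,k)=Hom(\Lambda,V/\Lambda)$, and $\phi_{\beta}$ endows it with the quotient norm $\beta_{\pi}$ induced by the map $Hom(V,V)\twoheadrightarrow Hom(\Lambda,V/\Lambda)$, $T\mapsto\pi_{\Lambda}\circ T\circ\iota_{\Lambda}$, where $\iota_{\Lambda}:\Lambda\hookrightarrow V$ and $\pi_{\Lambda}:V\to V/\Lambda$. Under the canonical identification $Hom(V,V)=V^{*}\otimes V$ this map is precisely $\iota_{\Lambda}^{*}\otimes\pi_{\Lambda}$. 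Now $\iota_{\Lambda}^{*}:(V_{K})^{*}\to(\Lambda_{K})^{*}$ is a metric surjection by the Hahn--Banach norm-preserving extension theorem, and $\pi_{\Lambda}:V_{L}\to(V/\Lambda)_{L}$ is a metric surjection by the very definition of the quotient norm. Since $\alpha$ is a projective crossnorm, the tensor product of two metric surjections is again a metric surjection, so $\beta_{\pi}$ on $Hom(\Lambda,V/\Lambda)$ coincides with $\alpha$ evaluated on $Hom(\Lambda_{K},V_{L}/\Lambda)$, which is exactly the norm defining $\phi_{\alpha,K,L}$ at $\Lambda$. As $\Lambda$ was arbitrary, and the same argument applies verbatim with $k$ replaced by $n-k$, the claim follows.

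Once the claim is available the corollary is immediate. Theorem \ref{thm:Holmes-Thompson} applied to $\beta$ yields $v_{HT}(\tilde{G}(V,k),\phi_{\alpha,K,L})=v_{HT}(\tilde{G}(V,n-k),\phi_{\alpha,K,L})$, and the Corollary immediately preceding this one yields equality of girth. If moreover $\alpha$ is smooth and strictly convex on $Hom(V_{K},V_{L})$, then so is $\beta$, and the Theorem just above it (the length- and rank-preserving bijection between the closed geodesics of $\tilde{G}(V,k)$ and those of $\tilde{G}(V,n-k)$, which sends symmetric geodesics to symmetric geodesics) shows that the length spectra, the symmetric length spectra, and the ranks of the geodesics of the two manifolds coincide.

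The only point requiring real care is the identification $\phi_{\alpha,K,L}=\phi_{\beta}$, that is, the fact that the quotient-space norm of a projective crossnorm is again the same crossnorm on the quotient; this is exactly where projectivity of $\alpha$ is used. The possible obstacles there are checking that the restriction map $(V_{K})^{*}\to(\Lambda_{K})^{*}$ is a metric surjection and that the tangent quotient map factors as $\iota_{\Lambda}^{*}\otimes\pi_{\Lambda}$; both are routine, and everything else is bookkeeping on top of the theorems already proved for $\phi_{\beta}$.
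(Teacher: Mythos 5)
Your proposal is correct and follows exactly the route the paper intends (the paper leaves the corollary without a written proof, but states in an earlier example, for $k=1$, that $\phi_{\alpha,K,L}$ coincides with $\phi_{\beta}$ when $\beta$ is any projective crossnorm on $Hom(V_K,V_L)$, and the corollary is just the general-$k$ extension of that observation fed into the preceding theorems). The key step you supply — that the quotient map $Hom(V,V)\twoheadrightarrow Hom(\Lambda,V/\Lambda)$, read as $\iota_{\Lambda}^{*}\otimes\pi_{\Lambda}$, is a metric surjection because $\alpha$ is projective — is precisely an instance of the paper's own definition of projectivity with $A=B=V$, $A_{1}=B_{1}=\Lambda$, so this is the same argument, just phrased in the tensor-product language of metric surjections rather than directly in terms of $Hom$-space quotients.
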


\begin{cor}
Assume $\alpha$ is a symmetric and projective crossnorm, $V$ a normed
space. Then $(\tilde{G}(V,k),\mbox{\ensuremath{\phi}}_{V,\alpha})$
and $(\tilde{G}(V^{*},k),\mbox{\ensuremath{\phi}}_{V^{*},\alpha})$
have equal Holmes-Thompson volume and equal girth.
\end{cor}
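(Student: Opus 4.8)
The plan is to derive this corollary purely by bookkeeping, combining the Corollary that follows Proposition~\ref{symmetry} (for symmetric crossnorms) with the Corollary immediately preceding the present one (for projective crossnorms), after specializing both to the case $K=L=B(V)$. By definition $\phi_{\alpha,B(V),B(V)}=\phi_{V,\alpha}$, and since $B(V)^{o}=B(V^{*})$ one also has $\phi_{\alpha,B(V)^{o},B(V)^{o}}=\phi_{V^{*},\alpha}$ as Finsler structures on $\tilde{G}(V^{*},k)$.

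First I would apply the Corollary following Proposition~\ref{symmetry} with $K=L=B(V)$: since $\alpha$ is symmetric, $(\tilde{G}(V,n-k),\phi_{\alpha,B(V),B(V)})$ and $(\tilde{G}(V^{*},k),\phi_{\alpha,B(V)^{o},B(V)^{o}})$ are canonically isometric, i.e.\ $(\tilde{G}(V,n-k),\phi_{V,\alpha})$ and $(\tilde{G}(V^{*},k),\phi_{V^{*},\alpha})$ are canonically isometric. In particular they have equal Holmes--Thompson volume and equal girth. Next I would apply the Corollary immediately preceding this one, again with $K=L=B(V)$: since $\alpha$ is projective, $(\tilde{G}(V,k),\phi_{V,\alpha})$ and $(\tilde{G}(V,n-k),\phi_{V,\alpha})$ have equal Holmes--Thompson volume and equal girth. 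Composing the two equalities,
\[
v_{HT}(\tilde{G}(V,k),\phi_{V,\alpha})=v_{HT}(\tilde{G}(V,n-k),\phi_{V,\alpha})=v_{HT}(\tilde{G}(V^{*},k),\phi_{V^{*},\alpha}),
\]
and identically for the girth, which is exactly the assertion.

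No serious obstacle arises. The only point that needs attention is that the two cited corollaries have complementary hypotheses on $\alpha$ — one requires $\alpha$ symmetric, the other $\alpha$ projective — so the present statement genuinely needs \emph{both}, which is precisely why the hypothesis couples the two conditions. One should also note explicitly that for $\alpha$ projective the Finsler structure $\phi_{V,\alpha}$ coincides with $\phi_{\beta}$ for $\beta$ equal to $\alpha$ regarded as a norm on $Hom(V,V)$ (the quotient of a projective crossnorm is again that crossnorm), so that the two corollaries, stated for $\phi_{\beta}$ and for $\phi_{\alpha,K,L}$ respectively, really do apply to the same object.

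As an alternative, crossnorm-light route: with $\phi_{V,\alpha}=\phi_{\beta}$ as above, Proposition~\ref{symmetry} gives $(\tilde{G}(V,n-k),\phi_{\beta})\cong(\tilde{G}(V^{*},k),\phi_{\overline{\beta}})$; symmetry of $\alpha$ identifies $\overline{\beta}$ with $\alpha$ on $Hom(V^{*},V^{*})$, so $\phi_{\overline{\beta}}=\phi_{V^{*},\alpha}$; and Theorem~\ref{thm:Holmes-Thompson} together with the girth corollary preceding this one completes the argument in the same fashion.
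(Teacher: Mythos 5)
Your argument is correct and matches the derivation the paper intends: specialize the corollary following Proposition~\ref{symmetry} (using symmetry of $\alpha$) and the corollary immediately preceding the statement (using projectivity of $\alpha$, which identifies $\phi_{V,\alpha}$ with $\phi_{\beta}$ for $\beta=\alpha$ on $Hom(V,V)$) to $K=L=B(V)$, then chain the two equalities through $\tilde{G}(V,n-k)$. Your explicit remark about why \emph{both} hypotheses on $\alpha$ are needed, and why the $\phi_{\alpha,K,L}$-statements apply to $\phi_{\beta}$, is exactly the bookkeeping the paper leaves implicit.
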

\appendix

\section{Appendix}

\subsection{\label{sub:tensor norms}A brief overview of tensor norms:}

For further details we refer to \cite{Ry}. A uniform crossnorm $\alpha$
is an assignment of a norm to $A^{*}\otimes B\simeq Hom(A,B)$ for
pairs of isometry classes of finite dimensional normed spaces $A,B$,
s.t. rank 1 operators have the natural norm: $\alpha(a^{*}b)=\|a^{*}\|\|b\|$.
Important examples are the operator (injective) norm $\|\|_{Op}$
and the nuclear (projective) norm $\|\|_{N}$. One can also introduce
the dual uniform crossnorm $\alpha^{*}$, given by trace duality:
$T\in(Hom(X,Y),\alpha)$ and $S\in(Hom(Y,X),\alpha^{*})$ are paired
by $(T,S)\mapsto tr(ST).$ For $\alpha=\|\|_{Op}$, one has $\alpha^{*}=\|\|_{N}$.
\begin{enumerate}
\item A uniform crossnorm $\alpha$ is symmetric if the adjoint map
\[
^{*}:(Hom(A,B),\alpha)\rightarrow(Hom(B^{*},A^{*}),\alpha)
\]
is an isometry for all pairs $A,B$. If $\alpha$ is symmetric, so
is $\alpha^{*}$. 
\item $\alpha$ is injective if for all quadruples $(A_{1}\subset A,B_{1}\subset B)$,
the natural injection $Hom(A/A_{1},B_{1})\hookrightarrow Hom(A,B)$
is an injection of $\alpha$-normed spaces. 
\item $\alpha$ is projective if for all quadruples $(A_{1}\subset A,B_{1}\subset B)$,
the natural projection $Hom(A,B)\twoheadrightarrow Hom(A_{1},B/B_{1})$
is a projection of $\alpha$-normed spaces.
\item $\alpha$ is projective if and only if $\alpha^{*}$ is injective. 
\item Thus $\|\|_{Op}$ is symmetric and injective, while $\|\|_{N}$ is
symmetric and projective. 
\end{enumerate}
\begin{comment}
The Hilbertian tensor norm $w_{2}$ is defined as follows: $w_{2}(T:A\rightarrow B)=\inf\{\|R\|_{Op}\|S\|_{Op}:T=RS\}$
where $RS$ is a factorization of $T$ through a Hilbert space $H$. 
\end{comment}

\subsection{\label{sub:The-momentum-mapping}The momentum map}

\subsubsection{Generalities on momentum map}

Most of the following can be found in any textbook on Hamiltonian
dynamics, see for instance \cite{Au} . It appears here to make the
exposition self contained, to fix notation, and also to prove several
lemmas which we couldn't find in the precise form which we need to
apply.\\
\\
We are given a Lie group $G$, its Lie algebra $\mathfrak{g}$, and
a symplectic manifold $W$ with a symplectic action of $G$.
\begin{claim}
Given maps $\mu:W\rightarrow\mathfrak{g}^{*}$ and $\tilde{\mu}:\mathfrak{g}\rightarrow C^{\infty}(W)$
satisfying $\tilde{\mu}(X)(p)=\langle\mu(p),X\rangle$, then $\omega(\underline{X}_{p},\bullet)=\langle D_{p}\mu(\bullet),X\rangle$
for all $X\in\mathfrak{g}$ if and only if $\tilde{\mu}(X)$ is a
Hamiltonian map for $\underline{X}$, i.e. $d_{p}(\tilde{\mu}(X))=\omega(\underline{X}_{p},\bullet)$.\end{claim}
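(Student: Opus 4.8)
The plan is to recognize that the claimed equivalence is nothing more than the chain rule applied to the defining relation $\tilde{\mu}(X)(p)=\langle\mu(p),X\rangle$, differentiated in the $W$-variable with $X\in\mathfrak{g}$ held fixed. So the argument will be purely a matter of unwinding definitions; there is no substantive geometric input.

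First I would fix $X\in\mathfrak{g}$ and factor $\tilde{\mu}(X)=\ell_X\circ\mu$, where $\ell_X:\mathfrak{g}^*\to\R$ is the evaluation functional $\ell_X(\xi)=\langle\xi,X\rangle$. Since $\mathfrak{g}^*$ is a vector space we identify $T_\xi\mathfrak{g}^*\cong\mathfrak{g}^*$ canonically, and under this identification $D_\xi\ell_X=\ell_X$ for every $\xi$, because $\ell_X$ is linear. By the chain rule, for each $p\in W$ and each tangent vector $v\in T_pW$,
\[
d_p\bigl(\tilde{\mu}(X)\bigr)(v)=\bigl(D_{\mu(p)}\ell_X\bigr)\bigl(D_p\mu(v)\bigr)=\langle D_p\mu(v),X\rangle .
\]
Hence the $1$-form $d_p(\tilde{\mu}(X))\in T_p^*W$ coincides, as an element of the cotangent space, with the functional $v\mapsto\langle D_p\mu(v),X\rangle$.

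With this identity in hand the two conditions in the statement are literally the same equation: $\omega(\underline{X}_p,\bullet)=\langle D_p\mu(\bullet),X\rangle$ says exactly that $\omega(\underline{X}_p,\bullet)$ equals $d_p(\tilde{\mu}(X))$, which is precisely the assertion that $\tilde{\mu}(X)$ is a Hamiltonian function for the fundamental vector field $\underline{X}$ at $p$. Running this for every $p\in W$ and every $X\in\mathfrak{g}$ yields the full equivalence. I do not expect any genuine obstacle; the only point requiring care is that $X$ must be kept fixed while differentiating in $p$, and it is exactly the linearity of $\xi\mapsto\langle\xi,X\rangle$ that allows the differential to pass through the pairing — which is what the hypothesis $\tilde{\mu}(X)(p)=\langle\mu(p),X\rangle$ is set up to exploit.
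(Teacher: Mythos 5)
Your argument is correct and is essentially the paper's own proof: both simply differentiate the defining identity $\tilde{\mu}(X)(p)=\langle\mu(p),X\rangle$ in $p$ (with $X$ fixed) to obtain $d_p(\tilde{\mu}(X))=\langle D_p\mu(\bullet),X\rangle$, after which the two conditions become the same equation. You merely spell out the chain rule and the linearity of the pairing, which the paper compresses into one line.
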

\begin{proof}
Calculation: $\mu(p)(X)=\tilde{\mu}(X)(p)$, after differentiating
by $p$ one has $\langle D_{p}\mu(\bullet),X\rangle=d_{p}(\tilde{\mu}(X))$.\end{proof}
\begin{defn}
Under these conditions, the action of $G$ is \emph{Hamiltonian},
and $\mu$ is called a \emph{momentum map}. In particular, for $H=\langle X,\mu(\bullet)\rangle=\tilde{\mu}\circ X:W\to\mathbb{R}$,
$X_{H}=\underline{X}$. 
\end{defn}
For a general (not necessarily equivariant) momentum map, we can describe
the image of the differential $D_{p}\mu$: Since $\langle D_{p}\mu(v),X\rangle=\langle v,i_{\underline{X}_{p}}\omega\rangle$,
$D_{p}\mu:T_{p}W\rightarrow\mathfrak{g}^{*}$ and $i_{\underline{X}_{p}}:\mathfrak{g}\rightarrow T_{p}^{*}W$
are dual maps, so 
\[
Im(D_{p}\mu)=Ann(Ker(i_{\underline{X}_{p}}))=Ann(\{X:\underline{X}_{p}=0\})
\]
and if $\mu(p)=\xi$ 
\[
T_{p}(\mu^{-1}(\xi))=Ker(D_{p}\mu)=Ann(Im(i_{\underline{X}_{p}}))=\{v:\omega(v,\underline{X}_{p})=0\}=T_{p}(Gp)^{\perp}
\]
In particular, $rank(D_{p}\mu)=codim(\{X:\underline{X}_{p}=0\})=dim(Gp)$.

\begin{comment}
\begin{thm}
(Noether) Given two Lie groups $G_{1},G_{2}$ acting on $W$ with
momentum maps $\mu_{j}$, then $\mu_{1}$ is invariant under the action
of $G_{2}$ iff $\mu_{2}$ is invariant under the action of $G_{1}$.\end{thm}
\begin{proof}
$\mu_{j}^{-1}(\xi)=(G_{j}p)^{\perp}$ and $G_{2}p\subset(G_{1}p)^{\perp}\iff G_{1}p\subset(G_{2}p)^{\perp}$\end{proof}
\end{comment}

\begin{comment}
\begin{claim}
\label{moment map equivariance definition}The map $\tilde{\mu}$
is a morphism of Lie algebras, i.e. $\{\tilde{\mu}(X),\tilde{\mu}(Y)\}=\tilde{\mu}([X,Y])$,
if and only if $\omega(\underline{X}_{p},\underline{Y}_{p})=\langle\mu(p),[X,Y]\rangle$.\end{claim}
\begin{proof}
$\{\tilde{\mu}(X),\tilde{\mu}(Y)\}(p)=\omega(\underline{X}_{p},\underline{Y}_{p})$
, and $\tilde{\mu}([X,Y])(p)=\langle\mu(p),[X,Y]\rangle$\end{proof}
\end{comment}

\begin{defn}
If the fundamental vector fields on $W$ and $\mathfrak{g}^{*}$ for
the coadjoint action satisfy $\underline{X}_{\mu(p)}=D_{p}\mu(\underline{X}_{p})$
for all $p\in W$, the momentum map is called \emph{equivariant}.

\begin{comment}
\begin{claim}
The momentum map is equivariant if and only if $\{f\circ\mu,g\circ\mu\}(p)=\{f,g\}(\mu(p))$
for all $f,g\in C^{\infty}(\mathfrak{g}^{*})$\end{claim}
\begin{proof}
By approximation + Leibniz rule, it is enough to prove (and sufficient
to assume) the equality for linear $f,g:\mathfrak{g}^{*}\rightarrow\mathbb{R}$,
i.e. $f=X$, $g=Y$. Then
\[
\{f\circ\mu,g\circ\mu\}(p)=\{\tilde{\mu}\circ X,\tilde{\mu}\circ Y\}(p)=\omega(\underline{X}_{p},\underline{Y}_{p})
\]
and 
\[
\{f,g\}(\mu(p))=\langle\mu(p),[X,Y]\rangle
\]
\end{proof}
\end{comment}
\end{defn}
\begin{claim}
The momentum map is equivariant if and only if $\omega(\underline{X}_{p},\underline{Y}_{p})=\langle\mu(p),[X,Y]\rangle$
for all $X,Y\in\mathfrak{g}$, $p\in W$.\end{claim}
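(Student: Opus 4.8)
The plan is to unwind both sides of the claimed equivalence into statements about the value of an element of $\mathfrak{g}^{*}$ on an arbitrary $Y\in\mathfrak{g}$, and then to compare them directly. The argument is purely formal and uses only the duality between $D_{p}\mu$ and the map $X\mapsto i_{\underline{X}_{p}}\omega$ that has already been recorded in this subsection.

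First I would recall the infinitesimal coadjoint action: for $\xi\in\mathfrak{g}^{*}$ and $X\in\mathfrak{g}$, differentiating $t\mapsto\mathrm{Ad}^{*}_{\exp(tX)}\xi$ at $t=0$ shows that the fundamental vector field $\underline{X}_{\xi}\in T_{\xi}\mathfrak{g}^{*}\simeq\mathfrak{g}^{*}$ of the coadjoint action is characterized by $\langle\underline{X}_{\xi},Y\rangle=-\langle\xi,[X,Y]\rangle$ for all $Y\in\mathfrak{g}$. Next, equivariance is by definition the identity $D_{p}\mu(\underline{X}_{p})=\underline{X}_{\mu(p)}$ in $\mathfrak{g}^{*}$, for all $X\in\mathfrak{g}$ and $p\in W$; since an element of $\mathfrak{g}^{*}$ is determined by its pairings with all $Y\in\mathfrak{g}$, equivariance holds if and only if $\langle D_{p}\mu(\underline{X}_{p}),Y\rangle=\langle\underline{X}_{\mu(p)},Y\rangle$ for all $X,Y\in\mathfrak{g}$ and $p\in W$.

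Now I would substitute. The right-hand side equals $-\langle\mu(p),[X,Y]\rangle$ by the coadjoint formula. For the left-hand side, the relation $\langle D_{p}\mu(v),Y\rangle=\langle v,i_{\underline{Y}_{p}}\omega\rangle=\omega(\underline{Y}_{p},v)$ — valid for every $v\in T_{p}W$, since $D_{p}\mu$ and $Y\mapsto i_{\underline{Y}_{p}}\omega$ are dual maps — applied to $v=\underline{X}_{p}$ gives $\langle D_{p}\mu(\underline{X}_{p}),Y\rangle=\omega(\underline{Y}_{p},\underline{X}_{p})=-\omega(\underline{X}_{p},\underline{Y}_{p})$. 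Therefore equivariance is equivalent to $-\omega(\underline{X}_{p},\underline{Y}_{p})=-\langle\mu(p),[X,Y]\rangle$ for all $X,Y,p$, i.e. to $\omega(\underline{X}_{p},\underline{Y}_{p})=\langle\mu(p),[X,Y]\rangle$ for all $X,Y,p$, which is the stated condition. Equivalently, since $\{\tilde\mu(X),\tilde\mu(Y)\}(p)=\omega(\underline{X}_{p},\underline{Y}_{p})$ and $\tilde\mu([X,Y])(p)=\langle\mu(p),[X,Y]\rangle$, this says precisely that $\tilde\mu:\mathfrak{g}\to C^{\infty}(W)$ is a homomorphism of Lie algebras.

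I do not anticipate any genuine obstacle here: the only point requiring care is sign bookkeeping in the conventions for the coadjoint generator and for the fundamental vector fields of the $G$-action, but the two sign choices enter symmetrically on the two sides of the equivalence and cancel, so the statement is insensitive to them. The substantive analytic content — that $\tilde\mu(X)$ is a Hamiltonian for $\underline{X}$, hence that $D_{p}\mu$ is dual to $X\mapsto i_{\underline{X}_{p}}\omega$ — is already available from the preceding discussion.
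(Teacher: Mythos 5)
Your proof is correct and follows essentially the same route as the paper's: pair the equivariance identity $D_p\mu(\underline{X}_p)=\underline{X}_{\mu(p)}$ against an arbitrary element of $\mathfrak{g}$, identify the right-hand side via the infinitesimal coadjoint action and the left-hand side via the defining duality of the momentum map, and observe that the two resulting expressions are exactly the two sides of the claimed identity. The only cosmetic difference is that the paper writes $\langle\mu(p),[Z,X]\rangle$ rather than $-\langle\mu(p),[X,Y]\rangle$ and leaves the relabeling implicit; your sign bookkeeping is consistent and your closing remark about $\tilde\mu$ being a Lie-algebra homomorphism is a standard equivalent reformulation that the paper does not spell out but does not need.
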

\begin{proof}
Take any $Z\in\mathfrak{g}$. Then the co-adjoint infinitesimal action
is given by 
\[
\langle\underline{X}_{\mu(p)},Z\rangle=\langle\mu(p),[Z,X]\rangle
\]
while by definition of momentum map
\[
\langle D_{p}\mu(\underline{X}_{p}),Z\rangle=\omega(\underline{Z}_{p},\underline{X}_{p})
\]
Thus equivariance of $\mu$ amounts to equality of the two right hand
sides.\end{proof}
\begin{cor}
\label{cor:equivariant-momentum}Suppose the momentum map $\mu$ commutes
with the action of $G$: $\mu(gp)=Ad_{g}^{*}(\mu(p))$. Then $\mu$
is an equivariant momentum map. If $G$ is connected, the reverse
implication also holds.\end{cor}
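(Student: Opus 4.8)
The plan is to relate the global equivariance relation $\mu(gp)=Ad_{g}^{*}(\mu(p))$ and the infinitesimal one $\underline{X}_{\mu(p)}=D_{p}\mu(\underline{X}_{p})$ by differentiating along one-parameter subgroups in one direction, and by integrating an ODE on $\mathfrak{g}^{*}$ in the other.

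For the forward implication I would fix $p\in W$ and $X\in\mathfrak{g}$ and differentiate the identity $\mu(\exp(tX)p)=Ad_{\exp(tX)}^{*}(\mu(p))$ at $t=0$. By the chain rule and the definition of the fundamental vector field, the derivative of the left-hand side is $D_{p}\mu(\underline{X}_{p})$, while the derivative of the right-hand side is $\underline{X}_{\mu(p)}$, the value at $\mu(p)$ of the fundamental vector field of the coadjoint action, which by the formula $\langle\underline{X}_{\mu(p)},Z\rangle=\langle\mu(p),[Z,X]\rangle$ recorded above is exactly what the paper means. Since $p$ and $X$ are arbitrary, this is precisely the defining property of an equivariant momentum map; note that no connectedness is needed here.

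For the converse I would assume $G$ connected and the infinitesimal condition, fix $p$ and $X$, and compare the two curves $c_{1}(t)=\mu(\exp(tX)p)$ and $c_{2}(t)=Ad_{\exp(tX)}^{*}(\mu(p))$ in $\mathfrak{g}^{*}$, which agree at $t=0$. Differentiating $c_{1}$ by means of $\exp((t+s)X)=\exp(sX)\exp(tX)$ gives $\dot{c}_{1}(t)=D_{q}\mu(\underline{X}_{q})$ with $q=\exp(tX)p$, and infinitesimal equivariance at $q$ rewrites this as $\dot{c}_{1}(t)=\underline{X}_{\mu(q)}=\underline{X}_{c_{1}(t)}$, whereas $\dot{c}_{2}(t)=\underline{X}_{c_{2}(t)}$ holds by the definition of the coadjoint fundamental vector field. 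Since $\underline{X}$ is a smooth (indeed linear) vector field on $\mathfrak{g}^{*}$, uniqueness of integral curves forces $c_{1}\equiv c_{2}$, i.e. $\mu(\exp(tX)p)=Ad_{\exp(tX)}^{*}(\mu(p))$ for all $t$. Then, using that a connected Lie group is generated by $\exp(\mathfrak{g})$, I would write a general $g$ as $g=\exp(X_{1})\cdots\exp(X_{m})$ and iterate this one-parameter identity together with $Ad_{g_{1}g_{2}}^{*}=Ad_{g_{1}}^{*}Ad_{g_{2}}^{*}$ to conclude $\mu(gp)=Ad_{g}^{*}(\mu(p))$ for every $g\in G$.

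The only delicate point is the bookkeeping of sign and ordering conventions for $\underline{X}$ and for $Ad^{*}$, so that differentiating the global identity genuinely reproduces the paper's infinitesimal definition of equivariance rather than its negative; once that is pinned down, the differentiation step, the ODE-uniqueness step, and the appeal to connectedness are all routine.
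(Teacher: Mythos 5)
Your proof is correct and follows the same route as the paper's one-line argument: differentiate $\mu(\exp(tX)p)=Ad_{\exp(tX)}^{*}\mu(p)$ at $t=0$ to obtain the infinitesimal condition $D_{p}\mu(\underline{X}_{p})=\underline{X}_{\mu(p)}$ (which is the paper's definition of equivariance, so the forward direction is immediate), and for the converse integrate this identity along one-parameter subgroups and use that a connected group is generated by $\exp(\mathfrak{g})$. Your version simply spells out the ODE-uniqueness bookkeeping that the paper compresses into ``connectedness allows to integrate this equation.''
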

\begin{proof}
Take $g(t)=exp(tX)$ and differentiate: $D_{p}\mu(\underline{X}_{p})=\underline{X}_{\mu(p)}$,
and the Claim above applies. Connectedness allows to integrate this
equation.\end{proof}
\begin{rem}
When $\mu(gp)=Ad_{g}^{*}(\mu(p))$, we refer to $\mu$ as a $G$-equivariant
momentum map. By the corollary, $G$-equivariance implies equivariance,
and the two notions coincide for connected groups $G$.\end{rem}
\begin{fact}
The co-adjoint orbit $G\xi$ is naturally a symplectic manifold, with
Kirillov's symplectic form given by $\omega_{\xi}(\underline{X}_{\xi},\underline{Y}_{\xi})=\langle\xi,[X,Y]\rangle$
for $X,Y\in\mathfrak{g}$. The action of $G$ on $G\xi$ is Hamiltonian,
with $G$-equivariant momentum map given by the inclusion $G\xi\subset G$.\end{fact}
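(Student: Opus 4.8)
The plan is to prove the classical Kirillov--Kostant--Souriau theorem along its standard lines: realize $\mathcal{O}:=G\xi$ as a homogeneous manifold, verify that the proposed formula defines a nondegenerate, $G$-invariant, closed $2$-form, and then read off the momentum map. First I would recall that the orbit map $g\mapsto Ad^{*}_{g}\xi$ descends to an injective immersion $G/G_{\xi}\to\mathfrak{g}^{*}$, where $G_{\xi}$ is the coadjoint stabilizer of $\xi$; this endows $\mathcal{O}$ with the structure of an (immersed) submanifold whose tangent space at a point $\eta$ is spanned by the fundamental vector fields $\underline{X}_{\eta}$, $X\in\mathfrak{g}$, with $\underline{X}_{\eta}=0$ precisely when $X$ stabilizes $\eta$. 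Differentiating the defining relation $\langle Ad^{*}_{g}\eta,Y\rangle=\langle\eta,Ad_{g^{-1}}Y\rangle$ yields the identity $\langle\underline{X}_{\eta},Y\rangle=-\langle\eta,[X,Y]\rangle$ for all $X,Y\in\mathfrak{g}$, which is essentially the only computational input needed.

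From this identity the formula $\omega_{\eta}(\underline{X}_{\eta},\underline{Y}_{\eta})=\langle\eta,[X,Y]\rangle$ is immediately well defined (if $\underline{X}_{\eta}=0$ then $\langle\eta,[X,Y]\rangle=-\langle\underline{X}_{\eta},Y\rangle=0$ for all $Y$, and symmetrically in the second argument by antisymmetry of the bracket), skew-symmetric, and nondegenerate on $T_{\eta}\mathcal{O}$ (if $\omega_{\eta}(\underline{X}_{\eta},\cdot)=0$ then $\langle\eta,[X,Y]\rangle=0$ for all $Y$, hence $\underline{X}_{\eta}=0$). For $G$-invariance I would note that $Ad^{*}_{g}$ carries $\underline{X}_{\eta}$ to $\underline{(Ad_{g}X)}_{Ad^{*}_{g}\eta}$ while $\langle Ad^{*}_{g}\eta,[Ad_{g}X,Ad_{g}Y]\rangle=\langle Ad^{*}_{g}\eta,Ad_{g}[X,Y]\rangle=\langle\eta,[X,Y]\rangle$; since the $\underline{X}$ span the tangent bundle, this shows the formula defines a genuine smooth $2$-form.

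Closedness is the heart of the matter: I would substitute the (spanning) fundamental vector fields into the Cartan formula for $d\omega(\underline{X},\underline{Y},\underline{Z})$. On $\mathcal{O}$, the function $\omega(\underline{Y},\underline{Z})$ is the restriction of the linear functional $\eta\mapsto\langle\eta,[Y,Z]\rangle$ on $\mathfrak{g}^{*}$, so its derivative along $\underline{X}$ at $\eta$ equals $\langle\underline{X}_{\eta},[Y,Z]\rangle=-\langle\eta,[X,[Y,Z]]\rangle$; and since $[\underline{X},\underline{Y}]=-\underline{[X,Y]}$, the three bracket terms in the Cartan formula contribute $\langle\eta,[[X,Y],Z]\rangle$ together with its cyclic permutations. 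Collecting the six terms, everything collapses, up to an overall nonzero factor, to $\langle\eta,\,[[X,Y],Z]+[[Y,Z],X]+[[Z,X],Y]\rangle$, which vanishes by the Jacobi identity, so $d\omega=0$. For the momentum map statement, let $j\colon\mathcal{O}\hookrightarrow\mathfrak{g}^{*}$ be the inclusion and, for $X\in\mathfrak{g}$, set $\tilde{\mu}(X):=\langle j(\cdot),X\rangle$, the restriction to $\mathcal{O}$ of the linear functional $\langle\cdot,X\rangle$ on $\mathfrak{g}^{*}$; then for all $Y$, $d(\tilde{\mu}(X))(\underline{Y}_{\eta})=\langle\underline{Y}_{\eta},X\rangle=-\langle\eta,[Y,X]\rangle=\langle\eta,[X,Y]\rangle=\omega_{\eta}(\underline{X}_{\eta},\underline{Y}_{\eta})$, i.e.\ $i_{\underline{X}}\omega=d(\tilde{\mu}(X))$, so by the first Claim of this subsection $j$ is a momentum map. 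Its $G$-equivariance is tautological, $j(Ad^{*}_{g}\eta)=Ad^{*}_{g}\eta=Ad^{*}_{g}(j(\eta))$, and Corollary \ref{cor:equivariant-momentum} then yields that the action of $G$ on $\mathcal{O}$ is Hamiltonian with equivariant momentum map $j$.

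The single delicate point is the closedness computation: one must fix once and for all the sign conventions in $ad^{*}$ and in the Lie-algebra (anti)homomorphism $X\mapsto\underline{X}$, after which the Jacobi identity does all the work and every remaining step is either tautological or a direct application of the momentum-map formalism set up above.
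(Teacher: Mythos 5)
The paper states this as a background Fact without proof, deferring to the textbook reference \cite{Au}; there is therefore no in-paper argument to compare against. Your proof is a correct rendition of the standard Kirillov--Kostant--Souriau construction, and the sign conventions you adopt (the key identity $\langle\underline{X}_{\eta},Y\rangle=-\langle\eta,[X,Y]\rangle$, and $[\underline{X},\underline{Y}]=-\underline{[X,Y]}$ for a left action) are consistent with those the paper uses in the surrounding Claims and Corollaries, so the whole appendix fits together. The pointwise well-definedness, nondegeneracy, and $G$-invariance arguments are all fine; the Cartan-formula computation of $d\omega$ does collapse as you say (with overall factor $2$) to the Jacobi identity; and the momentum-map verification $d(\tilde{\mu}(X))(\underline{Y}_{\eta})=\omega_{\eta}(\underline{X}_{\eta},\underline{Y}_{\eta})$ together with the tautological $G$-equivariance of the inclusion closes the loop via the paper's Corollary \ref{cor:equivariant-momentum}. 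The only point you glide over slightly is smoothness of $\omega$ as a $2$-form (the $\underline{X}$ span but are not a frame, so one should pick, locally, a subset forming a local frame and observe the formula is smooth in $\eta$); this is routine and does not affect correctness.
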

\begin{cor}
\label{cor:moment map is symplectomorphism}Let $W$ be a symplectic
manifold equipped with a Hamiltonian action of $G$ and an equivariant
momentum map $\mu$. Suppose $\mu(p)=\xi$ where $p\in W$, $\xi\in\mathfrak{g^{*}}.$
Then , $\omega_{p}(\underline{X}_{p},\underline{Y}_{p})=\omega_{\xi}(\underline{X}_{\xi},\underline{Y}_{\xi})$
for all $X,Y\in\mathfrak{g}$. If moreover the action of $G$ is transitive
at $p$ (i.e. $T_{p}(Gp)=T_{p}W$), then $\mu^{^{*}}\omega_{\xi}=\omega_{p}$\end{cor}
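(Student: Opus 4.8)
The plan is to read off both assertions directly from the infinitesimal characterization of equivariance together with the explicit formula for Kirillov's form, so that essentially no new work is required. First I would invoke the Claim characterizing equivariant momentum maps: since $\mu$ is equivariant, $\omega_p(\underline{X}_p,\underline{Y}_p)=\langle\mu(p),[X,Y]\rangle=\langle\xi,[X,Y]\rangle$ for all $X,Y\in\mathfrak{g}$. On the other side, the Fact recalled above states that Kirillov's form on the coadjoint orbit $G\xi$ is $\omega_\xi(\underline{X}_\xi,\underline{Y}_\xi)=\langle\xi,[X,Y]\rangle$. Comparing the two right-hand sides yields the first identity $\omega_p(\underline{X}_p,\underline{Y}_p)=\omega_\xi(\underline{X}_\xi,\underline{Y}_\xi)$.

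For the second assertion I would use the defining property of an equivariant momentum map, namely $D_p\mu(\underline{X}_p)=\underline{X}_{\mu(p)}=\underline{X}_\xi$. Assuming the $G$-action is transitive at $p$, every tangent vector in $T_pW$ is of the form $\underline{X}_p$ for some $X\in\mathfrak{g}$ (and if $\underline{X}_p=\underline{X}'_p$ then applying $D_p\mu$ forces $\underline{X}_\xi=\underline{X}'_\xi$, so the computation is unambiguous). Hence for $v=\underline{X}_p$, $w=\underline{Y}_p$,
\[
(\mu^{*}\omega_\xi)_p(v,w)=\omega_\xi(D_p\mu\,v,\,D_p\mu\,w)=\omega_\xi(\underline{X}_\xi,\underline{Y}_\xi)=\omega_p(\underline{X}_p,\underline{Y}_p)=\omega_p(v,w),
\]
the penultimate equality being the first assertion. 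Since $v,w$ range over all of $T_pW$, this gives $\mu^{*}\omega_\xi=\omega_p$ at $p$, and the same argument applies at every point of the orbit $Gp$.

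There is no serious obstacle here; the content has been front-loaded into the earlier Claim, Fact and Definition, and what remains is bookkeeping. The only points deserving care are: (i) that transitivity at $p$ is exactly what licenses writing an arbitrary tangent vector as a fundamental vector field value, which is needed to upgrade the pointwise bilinear identity to an equality of $2$-forms; and (ii) that $\mu^{*}\omega_\xi$ is meaningful near $p$, i.e. that $\mu$ sends a neighbourhood of $p$ into $G\xi$ — this follows since $t\mapsto\mu(\exp(tX)p)$ solves the coadjoint-flow ODE with value $\xi$ at $t=0$ and hence stays in $G\xi$. Combined with the rank formula $\mathrm{rank}(D_p\mu)=\dim(Gp)$ noted earlier, transitivity also forces $D_p\mu$ to be a linear isomorphism onto $T_\xi(G\xi)$, so $\mu$ restricts to a local symplectomorphism onto $G\xi$ — the form in which the corollary is applied in the proofs above.
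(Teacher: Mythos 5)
Your proof is correct and follows essentially the same route as the paper's: the first identity comes from the infinitesimal equivariance characterization $\omega_p(\underline{X}_p,\underline{Y}_p)=\langle\xi,[X,Y]\rangle$ matched against the explicit formula for Kirillov's form, and the second from pushing tangent vectors forward by $D_p\mu(\underline{X}_p)=\underline{X}_\xi$ under the transitivity hypothesis. The extra remarks you add (well-definedness of $\mu^{*}\omega_\xi$, the rank computation) are sound clarifications of points the paper leaves implicit.
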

\begin{proof}
By equivariance of $\mu$, we get $\omega_{p}(\underline{X}_{p},\underline{Y}_{p})=\langle\xi,[X,Y]\rangle=\omega_{\xi}(\underline{X}_{\xi},\underline{Y}_{\xi})$.
The last part amounts to the verification $\omega_{p}(\underline{X}_{p},\underline{Y}_{p})=\omega_{\xi}(D_{p}\mu\underline{X}_{p},D_{p}\mu\underline{Y}_{p})$
for all $X,Y\in\mathfrak{g}$, which follows from the first part again
by equivariance of $\mu$.\end{proof}
\begin{cor}
\label{cor:Hamiltonian flow moment map}Let $G$ be a Lie group, $H:\mathfrak{g}^{*}\to\R$
any smooth function, and fix $\xi\in\mathfrak{g}^{*}$. Let $W$ be
a symplectic manifold equipped with a Hamiltonian action of $G$ and
momentum map $\mu$. Suppose $\mu(p)=\xi$. Consider the Hamiltonian
$\mu^{*}H=H\circ\mu$ on $W$. Denote the $\mu^{*}H$-flow on $W$
by $\phi_{t}$, and the $H$-flow on $G\xi$ by $\psi_{t}$ . Then 

(1) $\phi_{t}(p)\in Gp$. 

(2) If $\mu$ is equivariant, then $\mu\phi_{t}(p)=\psi_{t}(\xi)$.\end{cor}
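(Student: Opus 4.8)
The plan is to compute the Hamiltonian vector field $X_{\mu^{*}H}$ explicitly in terms of the $G$-action, and then read off both assertions. For $q\in W$, identify the differential $dH_{\mu(q)}\in T^{*}_{\mu(q)}\mathfrak{g}^{*}$ with an element of $\mathfrak{g}=(\mathfrak{g}^{*})^{*}$. By the chain rule, for $v\in T_{q}W$,
\[
d(\mu^{*}H)_{q}(v)=dH_{\mu(q)}\bigl(D_{q}\mu(v)\bigr)=\langle D_{q}\mu(v),dH_{\mu(q)}\rangle,
\]
and by the defining property of the momentum map, $\omega(\underline{X}_{q},\bullet)=\langle D_{q}\mu(\bullet),X\rangle$, this equals $\omega\bigl(\underline{(dH_{\mu(q)})}_{q},v\bigr)$. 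Hence $X_{\mu^{*}H}(q)=\underline{(dH_{\mu(q)})}_{q}$: the $\mu^{*}H$-flow moves each point in the direction of the fundamental vector field of the Lie algebra element $dH_{\mu(q)}$. (This recovers, for $H$ linear, the relation $X_{\tilde\mu\circ X}=\underline{X}$ noted earlier.)

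For part (1): the vector field $X_{\mu^{*}H}$ is at every point $q$ tangent to the orbit $Gq$. Since $G$-orbits are initial (immersed) submanifolds, $X_{\mu^{*}H}$ restricts to a smooth vector field on the orbit $Gp$, and by uniqueness of integral curves the flow through $p$ stays in $Gp$; thus $\phi_{t}(p)\in Gp$ for all $t$ in the domain of definition.

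For part (2): assume $\mu$ equivariant, i.e. $D_{q}\mu(\underline{X}_{q})=\underline{X}_{\mu(q)}$ for the coadjoint infinitesimal action. Differentiating the curve $t\mapsto\mu(\phi_{t}(p))$ and inserting the formula for $X_{\mu^{*}H}$ gives, with $q=\phi_{t}(p)$,
\[
\frac{d}{dt}\mu(\phi_{t}(p))=D_{q}\mu\bigl(\underline{(dH_{\mu(q)})}_{q}\bigr)=\underline{(dH_{\mu(q)})}_{\mu(q)}.
\]
It remains to identify the right-hand side with the value at $\eta=\mu(q)\in G\xi$ of the Hamiltonian vector field of $H$ for the Kirillov form. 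Applying the first paragraph to the special case $W=G\xi$ with $\mu$ the inclusion $G\xi\hookrightarrow\mathfrak{g}^{*}$ (which is a $G$-equivariant momentum map for the Kirillov form), one gets $X_{H}(\eta)=\underline{(dH_{\eta})}_{\eta}$; equivalently this follows directly from $\omega_{\eta}(\underline{Z}_{\eta},\underline{Y}_{\eta})=\langle\eta,[Z,Y]\rangle$ together with $dH_{\eta}(\underline{Y}_{\eta})=\langle\eta,[dH_{\eta},Y]\rangle$, taking $Z=dH_{\eta}$. Therefore $t\mapsto\mu(\phi_{t}(p))$ is an integral curve of $X_{H}$ on $G\xi$ with initial value $\mu(p)=\xi$, and uniqueness of the flow yields $\mu(\phi_{t}(p))=\psi_{t}(\xi)$.

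The chain-rule identity and the coadjoint-orbit computation are routine; the only point that needs a little care is the tangency argument in part (1), since the orbits $Gp$ are in general only immersed, so flow-invariance must be phrased via the initial-submanifold property and uniqueness of integral curves rather than a naive flow-box argument. One should also keep the sign convention $i_{X_{f}}\omega=df$ consistent with the rest of section \ref{sec:Background-from-Finsler}.
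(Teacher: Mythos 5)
Your proof is correct, and it takes a mildly different route from the paper's that is worth noting. You begin by extracting the closed formula $X_{\mu^{*}H}(q)=\underline{\bigl(dH_{\mu(q)}\bigr)}_{q}$, i.e.\ the Hamiltonian vector field of the pulled-back function is the fundamental vector field of the $\mathfrak{g}$-valued function $q\mapsto dH_{\mu(q)}$; both assertions then drop out of this single identity. The paper proves (1) separately, by observing that $\mu^{*}H$ is constant on the level set $\mu^{-1}(\xi)$ and invoking the previously derived fact $T_{p}(\mu^{-1}(\xi))=T_{p}(Gp)^{\perp}$; for (2) it pairs $D_{p}\mu(X_{\mu^{*}H})$ with an arbitrary $Y\in\mathfrak{g}$ and chains through the momentum-map identity, equivariance, and the Kirillov form. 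Your version front-loads the same chain-rule and momentum-map steps into the explicit formula, which makes (1) immediate (the flow of a fundamental vector field is an orbit flow) and reduces (2) to the matching computation $X_{H}(\eta)=\underline{(dH_{\eta})}_{\eta}$ on the coadjoint orbit; you carry out that computation cleanly and consistently with the paper's sign convention. Your extra care in (1) about orbits being only immersed submanifolds is a legitimate technical refinement that the paper glosses over. The two proofs buy the same thing, but your organization is a bit more conceptual and reusable; the paper's version avoids introducing the identification $T^{*}_{\eta}\mathfrak{g}^{*}\cong\mathfrak{g}$ explicitly.
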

\begin{proof}
(1) Simply note that $\mu^{*}H$ is constant along level sets of $\mu$,
so $X_{\mu^{*}H}\in T_{p}(\mu^{-1}(\xi))^{\perp}=T_{p}(Gp)$. For
(2), we should verify that 
\[
D_{p}\mu(X_{\mu^{*}H})=X_{H}
\]
Take any $Y\in\mathfrak{g}$ and verify that $\langle X_{H},Y\rangle=\langle D_{p}\mu(X_{\mu^{*}H}),Y\rangle$.
We know that 
\[
\langle D_{p}\mu(X_{\mu^{*}H}),Y\rangle=-\omega(X_{\mu^{*}H},\underline{Y}_{p})=-d_{p}(H\circ\mu)(\underline{Y}_{p})=
\]
\[
=-dH(D_{p}\mu\underline{Y}_{p})=-\omega_{\xi}(X_{H},D_{p}\mu\underline{Y}_{p})
\]
Now by equivariance, $D_{p}\mu\underline{Y}_{p}=\underline{Y}_{\xi}$,
while $X_{H}=\underline{X}_{\xi}$ for some $X\in\mathfrak{g}$ (since
it is tangent to the co-adjoint orbit of $\xi$), so we need only
check that $-\omega_{\xi}(\underline{X}_{\xi},\underline{Y}_{\xi})=\langle\underline{X}_{\xi},Y\rangle$,
and both sides equal $\langle\xi,[Y,X]\rangle$.%
\begin{comment}
\begin{claim}
When $\mathfrak{g}$ is semi-simple, any symplectic action is Hamiltonian.
An equivariant momentum map is unique, if it exists.\end{claim}
\begin{proof}
$\tilde{\mu}(X)$ is generally defined up to a locally constant $c_{X}$,
and $c:\mathfrak{g}\to C$ is linear. Also, if both $\tilde{\mu}$
and $\tilde{\mu}(X)+c_{X}$ are equivariant, then 
\[
\tilde{\mu}([X,Y])+c_{[X,Y]}=\{\tilde{\mu}(X)+c_{X},\tilde{\mu}(Y)+c_{Y}\}=\{\tilde{\mu}(X),\tilde{\mu}(Y)\}=\tilde{\mu}([X,Y])
\]
i.e. $c([\mathfrak{g},\mathfrak{g}]$)=0. In a semi-simple algebra,
this implies $c$ is identically 0.\end{proof}
\begin{claim}
When $\mathfrak{g}$ is commutative, and $W$ is compact, a momentum
map is automatically equivariant.\end{claim}
\begin{proof}
$\{\tilde{\mu}(X),\tilde{\mu}(Y)\}-\tilde{\mu}([X,Y])$ is locally
constant. So $\{\tilde{\mu}(X),\tilde{\mu}(Y)\}$ is locally constant,
and must vanish somewhere since $\tilde{\mu}(X)$ has an extremum
on $W$.\end{proof}
\end{comment}

\end{proof}

\subsubsection{\label{sub:A-Canonic-Structure on cotangent bund}A canonic structure
on the cotangent bundle }

Given a Lie group $G$ acting on a smooth manifold $M$, one can naturally
extend the action of $G$ to $T^{*}M$: $\hat{g}(q,p)=(gq,v\mapsto p(dg^{-1}(v)))$.
Then one easily verifies that this action preserves the canonic 1-form
$\alpha$: since for $\xi\in T_{(q,p)}(T^{*}M)$ one has $dqd\hat{g}(\xi)=dgdq(\xi)$,
\[
\alpha_{\hat{g}(q,p)}(d\hat{g}(\xi))=p'(dqd\hat{g}(\xi))=p(dg^{-1}dqd\hat{g}(\xi))=p(dq(\xi))=\alpha(\xi)
\]
Then given $X\in\mathfrak{g}$, $\tilde{\mu}(X)(q,p)=p(\underline{X}_{q})$
is a Hamiltonian function for $\underline{X}_{q}$: 
\[
\omega(\bullet,\hat{\underline{X}_{q}})=i_{\hat{\underline{X}_{q}}}(d\alpha)=L_{\hat{\underline{X}_{q}}}\alpha+d(i_{\hat{\underline{X}_{q}}}\alpha)
\]
The first summand is 0 since the action of $G$ preserves $\alpha$,
and $i_{\hat{\underline{X}_{q}}}\alpha=\alpha(\hat{\underline{X}_{q}})=p(\underline{X}_{q})$;
so $\omega(\bullet,\hat{\underline{X}_{q}})=d(\tilde{\mu}(X))$ as
required. The momentum map itself can therefore be defined by
\[
\langle\mu(q,p),X\rangle=p(\underline{X}_{q})
\]
This momentum map is $G$-equivariant: 
\[
\langle\mu(g(q,p)),X\rangle=g_{*}p(\underline{X}_{gq})=p(dg^{-1}(\underline{X}_{gq}))
\]
while 
\[
\langle Ad_{g}^{*}\mu(q,p),X\rangle=\langle\mu(q,p),Ad_{g^{-1}}X\rangle=p(\underline{Ad_{g^{-1}}X}_{q})
\]
and $\underline{Ad_{g^{-1}}X}_{q}=\frac{d}{dt}(g^{-1}exp(tX)gq)=dg^{-1}(\underline{X}_{gq})$.
So, $\mu(g(q,p))=Ad_{g}^{*}\mu(q,p)$ and in particular by Corollary
\ref{cor:equivariant-momentum}, $\mu$ is equivariant.

\subsection{\label{sub:The-projective-girth of the square}The quotient girth
of the square}

Fix a coordinate system in $\R^{2}$, let $Q=[-1,1]\times[-1,1]$.
Let $\gamma$ be the boundary curve parametrized by angle $0\leq\alpha\leq2\pi$,
so $\gamma=(1,\tan\alpha)$ and $\dot{\gamma}=(0,\frac{1}{\cos^{2}\alpha})$
for $0\leq\alpha\leq\pi/4$. One immediately calculates that 

\[
g_{q}(Q)=8\int_{0}^{\pi/4}\frac{\det(\gamma(\alpha),\dot{\gamma}(\alpha))}{\det(\gamma(\alpha),\gamma(3\pi/4))}d\alpha=8\int_{0}^{\pi/4}\frac{d\alpha}{\cos\alpha(\sin\alpha+\cos\alpha)}=8\log2=5.54..
\]

\lyxaddress{School of Mathematical Sciences, Tel-Aviv University, Tel Aviv 69978,
Israel. }

\lyxaddress{Email:faifmand@post.tau.ac.il}

\begin{thebibliography}{References}
\bibitem[AP]{AP} J. C. \'{A}lvarez-Paiva, \emph{Dual spheres have
the same girth}, Amer. J. Math., 128, no. 2, 361-371, 2006

\bibitem[AT]{AT} J. C. \'{A}lvarez-Paiva, J. C. Thompson, \emph{A
sampler of Riemann-Finsler geometry}, MSRI Publications, 2004, 50,
1-48

\bibitem[Au]{Au} M. Audin, \emph{Torus Actions on Symplectic Manifolds},
Progress in Mathematics, 93, Birkh�user-Verlag, Basel 2004

\bibitem[Ba]{Ba} K. M. Ball, \emph{Volume ratios and a reverse isoperimetric
inequality}, J. London Math. Soc., 44, 351-359, 1991

\bibitem[HT]{HT} R. D. Holmes and A. C. Thompson, \emph{n-dimensional
area and content in Minkowski spaces}, Pacific J. Math, 85, no. 1,
77-110, 1979

\bibitem[Ma]{Ma} K. Mahler, \emph{Ein minimalproblem f\"{u}r konvexe
polygone}, Mathematica (Zutphen), B 7, 118-127, 1939

\bibitem[Mi]{Mi} J. W. Milnor, \emph{Morse Theory}, Princeton University
Press, Princeton 1969

\bibitem[Re]{Re} S. Reisner, \emph{Zonoids with minimal volume-product},
Math. Z., 192 (3), 339-346, 1986

\bibitem[Ry]{Ry} R. A. Ryan, \emph{Introduction to Tensor Products
of Banach Spaces}, Springer Monographs in Mathematics, Springer, London
2002

\bibitem[Sc]{Sc} J.J. Sch\"{a}ffer, \emph{Inner diameter, perimeter,
and girth of spheres}, Math. Ann., 173, 59-82, 1967

\bibitem[Th]{Th} A. C. Thompson, \emph{Minkowski geometry. }Encyclopedia
of Mathematics and its Applications, 63, Cambridge University Press,
Cambridge 1996

\end{thebibliography}
\end{document}